\def\lup{\rightharpoonup}
\def\ge{\geqslant}
\def\le{\leqslant}
\def\a{\alpha}
\def\g{\gamma}
\def\G{\Gamma}
\def\d{\delta}
\def\D{\Delta}
\def\e{\epsilon}
\def\s{\sigma}
\def\t{\tau}
\def\k{\kappa}
\def\l{\lambda}
\def\i{^{-1}}
\def\<{\langle}
\def\>{\rangle}
\newcommand{\bJ}{\mathbf J}
\newcommand{\bG}{\mathbf G}
\newcommand{\bM}{\mathbf M}
\newcommand{\bP}{\mathbf P}
\newcommand{\bN}{\mathbf N}
\newcommand{\bS}{\mathbf S}
\newcommand{\bT}{\mathbf T}
\newcommand{\Gr}{\mathrm{Gr}}
\newcommand{\Fl}{\mathrm{Fl}}
\newcommand{\rs}{\mathrm{rs}}
\newcommand{\BC}{\ensuremath{\mathbb {C}}\xspace}
\newcommand{\BF}{\ensuremath{\mathbb {F}}\xspace}
\newcommand{{\BG}}{\ensuremath{\mathbb {G}}\xspace}
\newcommand{{\BK}}{\ensuremath{\mathbb {K}}\xspace}
\newcommand{\BM}{\ensuremath{\mathbb {M}}\xspace}
\newcommand{\BN}{\ensuremath{\mathbb {N}}\xspace}
\newcommand{\BQ}{\ensuremath{\mathbb {Q}}\xspace}
\newcommand{\BR}{\ensuremath{\mathbb {R}}\xspace}
\newcommand{\BS}{\ensuremath{\mathbb {S}}\xspace}
\def\bH{\mathbf H}
\newcommand{\CA}{\ensuremath{\mathcal {A}}\xspace}
\newcommand{\CI}{\ensuremath{\mathcal {I}}\xspace}
\newcommand{\CK}{\ensuremath{\mathcal {K}}\xspace}
\newcommand{\CO}{\ensuremath{\mathcal {O}}\xspace}
\newcommand{\CP}{\ensuremath{\mathcal {P}}\xspace}
\newcommand{\CT}{\ensuremath{\mathcal {T}}\xspace}
\newcommand{\Ad}{{\mathrm{Ad}}}
\newcommand{\ad}{{\mathrm{ad}}}
\DeclareMathOperator{\Aut}{Aut}
\newcommand{\af}{\mathrm{af}}
\newcommand{\ex}{{\mathrm{ex}}}
\DeclareMathOperator{\rank}{rank}
\newcommand{\reg}{{\mathrm{reg}}}
\newcommand{\RaP}{{\rm RP}}
\def\tW{\tilde W}
\def\tS{\tilde \BS}
\def\kk{\mathbf k}
\DeclareMathOperator{\supp}{supp}
\newtheorem{theorem}{Theorem}
\newtheorem{proposition}[theorem]{Proposition}
\newtheorem{lemma}[theorem]{Lemma}
\theoremstyle{definition}
\newtheorem{remark}[theorem]{Remark}
\newtheoremstyle{query}%
{}{}
{\color{red}}
{}
{\sffamily\bfseries}{:}{12pt}
{}
\theoremstyle{query}
\newtheorem{aq}{Author Query/Comment}
\newcommand{\baq}{\begin{aq}}
\newcommand{\eaq}{\end{aq}}
\numberwithin{equation}{section}
\numberwithin{theorem}{section}
\renewcommand{\to}{%
   \ifbool{@display}{\longrightarrow}{\rightarrow}%
   }
\let\shortmapsto\mapsto
\renewcommand{\mapsto}{%
   \ifbool{@display}{\longmapsto}{\shortmapsto}%
   }
\newlength{\olen}
\newlength{\ulen}
\newlength{\xlen}
\newcommand{\xra}[2][]{%
   \ifbool{@display}%
      {\settowidth{\olen}{$\overset{#2}{\longrightarrow}$}%
       \settowidth{\ulen}{$\underset{#1}{\longrightarrow}$}%
       \settowidth{\xlen}{$\xrightarrow[#1]{#2}$}%
       \ifdimgreater{\olen}{\xlen}%
          {\underset{#1}{\overset{#2}{\longrightarrow}}}%
          {\ifdimgreater{\ulen}{\xlen}%
             {\underset{#1}{\overset{#2}{\longrightarrow}}}
             {\xrightarrow[#1]{#2}}}}%
      {\xrightarrow[#1]{#2}}
   }
\newcommand{\xyra}[2][]{%
   \settowidth{\xlen}{$\xrightarrow[#1]{#2}$}%
   \ifbool{@display}%
      {\settowidth{\olen}{$\overset{#2}{\longrightarrow}$}%
       \settowidth{\ulen}{$\underset{#1}{\longrightarrow}$}%
       \ifdimgreater{\olen}{\xlen}%
          {\mathrel{\xymatrix@M=.12ex@C=3.2ex{\ar[r]^-{#2}_-{#1} &}}}%
          {\ifdimgreater{\ulen}{\xlen}%
             {\mathrel{\xymatrix@M=.12ex@C=3.2ex{\ar[r]^-{#2}_-{#1} &}}}
             {\mathrel{\xymatrix@M=.12ex@C=\the\xlen{\ar[r]^-{#2}_-{#1} &}}}}}%
      {\mathrel{\xymatrix@M=.12ex@C=\the\xlen{\ar[r]^-{#2}_-{#1} &}}}%
   }
\newcommand{\xla}[2][]{%
   \ifbool{@display}%
      {\settowidth{\olen}{$\overset{#2}{\longleftarrow}$}%
       \settowidth{\ulen}{$\underset{#1}{\longleftarrow}$}%
       \settowidth{\xlen}{$\xleftarrow[#1]{#2}$}%
       \ifdimgreater{\olen}{\xlen}%
          {\underset{#1}{\overset{#2}{\longleftarrow}}}%
          {\ifdimgreater{\ulen}{\xlen}%
             {\underset{#1}{\overset{#2}{\longleftarrow}}}
             {\xleftarrow[#1]{#2}}}}%
      {\xleftarrow[#1]{#2}}
   }
\newcommand{\isoarrow}{%
   \ifbool{@display}{\overset{\sim}{\longrightarrow}}{\xrightarrow\sim}%
   }
\begin{document}

\title[]{Irreducible components of affine Lusztig varieties}
\author[Xuhua He]{Xuhua He}
\address{Department of Mathematics and New Cornerstone Science Laboratory, The University of Hong Kong, Pokfulam, Hong Kong, Hong Kong SAR, China}
\email{xuhuahe@hku.hk}

\thanks{}

\keywords{Affine Lusztig varieties, affine Deligne--Lusztig varieties, loop groups, affine flag variety, affine Grassmannian}
\subjclass[2010]{22E35,22E67}

\date{\today}

\begin{abstract}
Let $\breve{G}$ be a loop group and $\tW$ be its Iwahori-Weyl group. The affine Lusztig variety $Y_w(\gamma)$ describes the intersection of the Bruhat cell $\mathcal{I} \dot{w} \mathcal{I}$ for $w \in \tW$ with the conjugacy class of $\gamma \in \breve{G}$, while the affine Deligne-Lusztig variety $X_w(b)$ describes the intersection of the Bruhat cell $\mathcal{I} \dot{w} \mathcal{I}$ with the Frobenius-twisted conjugacy class of $b \in \breve{G}$. Although the geometric connections between these varieties are unknown, numerical relations exist in their geometric properties.

This paper explores the irreducible components of affine Lusztig varieties. The centralizer of $\g$ acts on $Y_w(\g)$ and the Frobenius-twisted centralizer of $b$ acts on $X_w(b)$. We relate the number of orbits on the top-dimensional components of $Y_w(\gamma)$ to the numbers of orbits on top-dimensional components of $X_w(b)$ and the affine Springer fibers. For split groups and elements $\gamma$ with integral Newton points, we show that, for most $w$, the numbers of orbits for the affine Lusztig variety and the associated affine Deligne-Lusztig variety match. Moreover, for these $\g$, we verify Chi's conjecture that the number of top-dimensional components in $Y_\mu(\gamma)$ within the affine Grassmannian equals to the dimension of a specific weight space in a representation of the Langlands dual group.
\end{abstract}

\maketitle

\section*{Introduction}

\subsection{Affine Lusztig varieties and affine Deligne-Lusztig varieties} 
The primary focus of this paper is on the affine analogues of the Lusztig varieties, specifically the affine Lusztig varieties. For simplicity, the introduction is limited to discussing split groups in the equal characteristic case. However, the main body of the paper extends the study to arbitrary reductive groups in both equal and mixed characteristic settings.

Let $L$ be the field of Laurent series over an algebraically closed field and $\bG$ be a connected reductive group split over $L$. Let $\breve G=\bG(L)$ be the corresponding loop group and $\CK=\bG(\CO_L)$ be its hyperspecial subgroup. The affine Lusztig varieties in the affine Grassmannian $\Gr=\breve G/\CK$ and the affine flag variety $\Fl=\breve G/\CI$ are defined by
\begin{gather*} 
Y^{\bG}_\mu(\g)=\{g \CK \in \Gr; g \i \g g \in \CK t^\mu \CK\}, \\
Y_w^{\bG}(\g)=\{g \CI \in \Fl; g \i \g g \in \CI \dot w \CI\}.
\end{gather*}
Here $\g$ is a regular semisimple element in $\breve G$, $\mu$ is a dominant coweight of $\breve G$, and $w$ is an element in the Iwahori--Weyl group of $\breve G$. It was first introduced by Lusztig in~\cite{Lu11}. In the affine Grassmannian case, it was also studied by Kottwitz and Viehmann ~\cite{KV}. In the literature,  affine Lusztig varieties are also referred to as generalized affine Springer fibers or Kottwitz–Viehmann varieties.

Affine Lusztig varieties are locally closed subschemes of the affine Grassmannian and the affine flag variety, equipped with a reduced scheme structure. They naturally arise in the study of orbital integrals of spherical Hecke algebras and Iwahori–Hecke algebras, and they serve as building blocks for the (conjectural) theory of affine character sheaves.

Affine Deligne--Lusztig varieties are defined in a similar way. Let $\bG'$ be a connected reductive group split over $\BF_q(\!(\e)\!)$ and $L'=\overline \BF_q(\!(\e)\!)$. Let $\breve G'=\bG'(L')$ and $\s$ be the Frobenius morphism on $\breve G'$. Let $\CI'$ be a $\s$-stable Iwahori subgroup of $\breve G'$. Let $b \in \breve G'$ and $w$ be an element in the Iwahori--Weyl group of $\breve G'$. The affine Deligne--Lusztig varieties are defined by
\begin{gather*} 
X^{\bG}_\mu(b)=\{g \CK'; g \i b \s(g) \in \CK' t^\mu \CK'\}, \\
X_w^{\bG'}(b)=\{g \CI'; g \i b \s(g) \in \CI' \dot w \CI'\}.
\end{gather*}
Here $b \in \breve G'$, $\mu$ is a dominant coweight of $\breve G'$, and $w$ is an element in the Iwahori--Weyl group of $\breve G'$. The notion of an affine Deligne--Lusztig variety was first introduced by Rapoport ~\cite{Ra} and plays an important role in arithmetic geometry and the Langlands program. 

Despite the similar definitions of affine Deligne–Lusztig varieties and affine Lusztig varieties, there is no direct geometric connection between these two families of varieties. However, the main result of \cite{He-ALV} establishes the following numerical connection between these varieties:

\emph{Suppose that the group $\bG'$ over $F'$ is associated with the group $\bG$ over $L$ and the regular semisimple conjugacy class of $\g$ in $\breve G$ is associated with the $\s$-conjugacy class of $b$ in $\breve G'$. Then, for any $w \in \tW$, }
\begin{itemize}
    \item $Y^{\bG}_w(\g) \neq \emptyset$ if and only if $X^{\bG'}_w(b) \neq \emptyset$;

    \item In this case $\dim Y^{\bG}_w(\g)=\dim X^{\bG'}_w(b)+\dim Y^{\bG}_{\g}$.
\end{itemize}

The notions of associated groups and associated conjugacy classes were introduced in \cite{He-ALV} (see also Theorem \ref{thm:ass}). The affine Springer fiber $Y^{\bG}_{\g}$ associated with $\g$ was defined in \cite{He-ALV} (see also \S\ref{sec:HN}).


\subsection{Main result} 
For a scheme $X$, we denote by $\Sigma^{{\rm top}}(X)$ the set of irreducible components of $X$ of dimension equals to $\dim(X)$. For any regular semisimple element $\g \in \breve G$, its centralizer $Z_{\breve G}(\g)$ acts on $Y^{\bG}_w(\g)$ and on $\Sigma^{{\rm top}}(Y^{\bG}_w(\g))$. Similarly, for any $b \in \breve G'$, the $\s$-centralizer $J_b$ acts on $X^{\bG'}_w(b)$ and on $\Sigma^{{\rm top}}(X^{\bG'}_w(b))$.

The main result of this paper is to enumerate the irreducible components of affine Lusztig varieties. The description is given in terms of reduction paths, a notion introduced in \cite{HNY} to encode the Deligne-Lusztig reduction steps. We refer to \S\ref{sec:tree} for the precise definition. 

We first recall the description of the irreducible components of affine Deligne-Lusztig varieties. 

(a) $\sharp\bigl(J_b \backslash \Sigma^{{\rm top}}(X^{\bG'}_w(b))\bigr)$ equals to the number of reduction paths in a reduction tree of $w$ that correspond to the $\s$-conjugacy class $[b]$ and have the correct lengths. 

We may rewrite it in a slightly different way as 

(a') $\sharp\bigl(J_b \backslash \Sigma^{{\rm top}}(X^{\bG'}_w(b))\bigr)=\sum_{\underline p} 1$, where $\underline p$ runs over the reduction paths in a reduction tree of $w$ that correspond to the $\s$-conjugacy class $[b]$ and have the correct lengths. 

Now we state our main result. 

\begin{theorem}\label{thm:intro1}
    Let $\g$ be a regular semisimple element in $\breve G$. Then for any $w \in \tW$, we have $$\sharp \bigl(Z_{\breve G}(\g) \backslash \Sigma^{{\rm top}}(Y^{\bG}_w(\g))\bigr)=\sum_{\underline p} n_{\underline p, \g},$$ where $\underline p$ runs over the reduction paths in a reduction tree of $w$ that correspond to the conjugacy class $\{\g\}$ and have the correct lengths and $n_{\underline p, \g}$ is the number of $Z_{\breve G}(\g)$-orbits on the irreducible components of affine Springer fibers associated with the pair $(\g, \underline p)$. 
\end{theorem}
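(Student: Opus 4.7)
\proof[Proof proposal]
The plan is to mimic, and generalize, the strategy that establishes the component count (a) for affine Deligne--Lusztig varieties, but with the crucial modification that the leaves of the reduction tree contribute affine Springer fibers rather than isolated points. The first step is to set up the Deligne--Lusztig reduction for $Y^{\bG}_w(\g)$: for any simple affine reflection $s$ and any $w \in \tW$, I would establish an analogue of the standard decomposition of $\CI \dot w \CI \cup \CI \dot s \dot w \CI$ under conjugation by $\CI \dot s \CI$, which induces a decomposition of $Y^{\bG}_w(\g)$ into two locally closed pieces $Y^{\bG}_{w,+}(\g)$ and $Y^{\bG}_{w,-}(\g)$ together with morphisms to $Y^{\bG}_{sws}(\g)$ (or $Y^{\bG}_{sw}(\g)$, depending on the length relations). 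The fibers of these morphisms should be affine spaces (or $\BG_m$-bundles over affine spaces) of known dimension, exactly as in the ADLV setting; this is where the dimension formula $\dim Y^{\bG}_w(\g)=\dim X^{\bG'}_w(b)+\dim Y^{\bG}_\g$ from~\cite{He-ALV} enters to verify that the reduction preserves the ``correct length'' condition.

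With the reduction step in hand, the second step is to run induction on a suitable invariant attached to $w$ (essentially the depth in the reduction tree). At each internal node the induction hypothesis applied to the smaller element $sws$ or $sw$, combined with the fact that the reduction morphism is a Zariski-locally trivial affine bundle, transfers top-dimensional components and $Z_{\breve G}(\g)$-orbits bijectively, splitting the sum over reduction paths into contributions coming through each outgoing edge. The third step is to handle the leaves: at a leaf of the reduction tree the element $w'$ is of minimal length in its conjugacy class (or satisfies whatever terminal condition the reduction tree imposes), and one must identify $Y^{\bG}_{w'}(\g)$ — or rather the piece of it corresponding to the given reduction path — with a $Z_{\breve G}(\g)$-equivariant fibration over an affine Springer fiber attached to the pair $(\g,\underline p)$. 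The number of top-dimensional $Z_{\breve G}(\g)$-orbits at the leaf is then exactly $n_{\underline p,\g}$, producing the desired summand.

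The hard part will be the leaf step. Unlike the ADLV case, where finite-type Deligne--Lusztig varieties at the leaves ultimately contribute a single orbit per reduction path after $\s$-conjugation, in the affine Lusztig setting the leaf produces a genuine affine Springer fiber, so the identification and the control of its top-dimensional $Z_{\breve G}(\g)$-orbits is nontrivial. Concretely, I would need to pin down the right ``building block'' Springer fiber — this is essentially the content of the definition of $n_{\underline p, \g}$ and of the affine Springer fiber $Y^{\bG}_\g$ introduced in \cite{He-ALV} — and then argue that after passing to $Z_{\breve G}(\g)$-orbits the contributions across distinct reduction paths are disjoint. Disjointness is the second delicate issue: one must argue that top-dimensional components arising from different reduction paths do not get identified inside $\Sigma^{{\rm top}}(Y^{\bG}_w(\g))$, which, following the ADLV strategy, should come from the fact that reduction paths record the successive Bruhat cells traversed by $g^{-1}\g g$, and this sequence is a conjugation invariant of the component.

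Finally, assembling these pieces yields
\[
\sharp\bigl(Z_{\breve G}(\g)\backslash\Sigma^{{\rm top}}(Y^{\bG}_w(\g))\bigr)
=\sum_{\underline p} n_{\underline p,\g},
\]
where $\underline p$ ranges over the reduction paths of correct length corresponding to $\{\g\}$. One sanity check I would perform along the way: when $\g$ is replaced by the associated basic element $b$, the affine Springer fibers degenerate so that $n_{\underline p,\g}=1$ for each relevant $\underline p$, recovering formula (a'); and the dimension count should degenerate via the $\dim Y^{\bG}_\g$ correction to match the ADLV dimension formula. Both checks tie back to the associated-class framework of \cite{He-ALV} and give confidence that the reduction has been set up correctly.
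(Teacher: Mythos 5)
Your high-level architecture matches the paper's: use Deligne--Lusztig reduction to decompose $Y_w(\g)$ into locally closed pieces indexed by reduction paths, run induction via the fibration structure of each reduction step, and isolate the contribution of each leaf. This is exactly Proposition~\ref{prop:dec2} combined with the dimension formula of Theorem~\ref{thm:le}. Two of the worries you flag are actually already discharged by the setup: the disjointness across different paths is built into the decomposition $Y_w(\g)=\bigsqcup_{\unp} Y_{\unp}(\g)$ (the $Y_{\unp}$ are a priori disjoint locally closed subschemes coming from the tree construction, so there is nothing further to argue), and the ``sanity check'' conflates the untwisted and $\s$-twisted settings --- $n_{\unp,[b]}=1$ in the affine Deligne--Lusztig case comes from transitivity of the $J_b$-action on components at the leaf (a theorem in \cite{He-Ann}), not from any degeneration of affine Springer fibers.

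The genuine gap is the leaf step, which you correctly identify as the hard part but leave unresolved. The paper's resolution is Proposition~\ref{prop:irr-basic}, and its content is nontrivial: first one applies the Hodge--Newton decomposition (Theorem~\ref{thm:HN1}) to replace $\bG$ by the Levi $\bM_{\nu_{w'}}$, so that $w'=u\t$ with $\t$ of length zero and $u$ a $\t$-elliptic element of a finite parabolic $W_{K_{w'}}$; next one considers the projection $\pi\colon Y_{u\t}(\g)\to Y_{K_{w'},\t}(\g)$ to the affine Springer fiber in the partial affine flag variety, and splits the base into its regular and non-regular loci; a dimension count (using that $\dim Y_\g=\dim Y_{K,\t}(\g)$ and that Springer fibers over non-regular points have positive dimension) shows the non-regular locus contributes nothing in top dimension; finally, over the regular locus the fibers are classical Lusztig varieties attached to an elliptic Weyl group element and a regular element, and these are irreducible by the input from \cite{HL}. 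That last citation is the irreducibility result that makes the top-dimensional components of $Y_{u\t}(\g)$ biject $Z_{\breve G}(\g)$-equivariantly with those of $Y^{\reg}_{K_{w'},\t}(\g)$, i.e.\ with the object defining $n_{\unp,\g}$. Without this chain --- Hodge--Newton reduction, regular/non-regular dichotomy, and the irreducibility of Lusztig varieties for regular conjugation --- the leaf contribution cannot be pinned down, so your sketch does not yet constitute a proof.
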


We refer to \S\ref{sec:levi} and Theorem \ref{thm:irr-alv} for the definitions and the precise statement. 

\subsection{Comparison theorem on the affine Lusztig varieties and affine Deligne-Lusztig varieties}

It is worth pointing out that the number $n_{\underline p, \g}$ does not equal to $1$ in general. It was shown by Tsai in \cite{Tsai} that for split groups, when $\g$ is ``close enough'' to the identity element, the number of $Z_{\breve G}(\g)$-orbits on the set of top dimensional irreducible components of the affine Springer fibers associated with $\g$ in the affine flag equals to the cardinality of the Weyl group. 

However, we discovered that for the split group and the regular semismimple element $\g$ with integral Newton point, when $w$ becomes ``larger'', the difference between the irreducible components of the affine Lusztig varieties and the affine Deligne-Lusztig variety disappears. This leads to our second main result. 



\begin{theorem}\label{thm:main-intro}

Suppose that $\bG$ is split over $L$ and $\g$ has integral Newton vector. Let $(\bG', [b])$ be the pair associated with $(\bG, \{\g\})$, then for ``almost all'' $w \in \tW$, we have
    \begin{equation}\label{eq:main}
        \sharp\bigl(Z_{\breve G}(\g) \backslash \Sigma^{{\rm top}}(Y^{\bG}_w(\g))\bigr)=\sharp\bigl(J_b \backslash \Sigma^{{\rm top}}(X^{\bG'}_w(b))\bigr).
    \end{equation}
\end{theorem}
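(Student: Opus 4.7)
The strategy is to combine Theorem~\ref{thm:intro1} with the reduction-path description (a') for affine Deligne-Lusztig varieties, and then reduce the problem to showing that each summand $n_{\underline p, \g}$ equals $1$. By Theorem~\ref{thm:intro1},
\[
\sharp\bigl(Z_{\breve G}(\g) \backslash \Sigma^{{\rm top}}(Y^{\bG}_w(\g))\bigr)=\sum_{\underline p} n_{\underline p, \g},
\]
the sum being over reduction paths $\underline p$ in a fixed reduction tree of $w$ corresponding to $\{\g\}$ with the correct lengths. By (a') applied to the associated datum $(\bG', [b])$,
\[
\sharp\bigl(J_b \backslash \Sigma^{{\rm top}}(X^{\bG'}_w(b))\bigr)=\sum_{\underline p} 1
\]
over the \emph{same} indexing set, since the notions of associated groups and associated conjugacy classes match the combinatorial data (Iwahori-Weyl group, Newton invariants, $\sigma$-conjugacy classes) that enter the reduction-tree count. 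Hence \eqref{eq:main} is equivalent to the assertion $n_{\underline p, \g}=1$ for every reduction path $\underline p$ that contributes, under the hypothesis on $w$.

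Next I would unwind the definition of $n_{\underline p, \g}$ from \S\ref{sec:levi}. A reduction path terminates at a minimal length element $x$ of its $\sigma$-conjugacy class in $\tW$, and the affine Springer fiber attached to $(\g, \underline p)$ is, up to $Z_{\breve G}(\g)$-equivariant identification, an affine Springer fiber $Y^{\bM}_x(\g_{\bM})$ inside a standard Levi $\bM\subseteq \bG$, where $\g_{\bM}$ is obtained from $\g$ along the reduction. Since $\bG$ is split and $\g$ has integral Newton vector, $\g_{\bM}$ can be arranged to be conjugate to a central translation $t^{\lambda}$ in $\bM$. In the extreme case $\bM=\bT$ one has $Y^{\bT}_x(\g_T)$ a single $Z_{\breve G}(\g)$-orbit on its top-dimensional components, whence $n_{\underline p, \g}=1$.

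The content of the theorem is then to make precise the condition ``almost all $w$'' (for instance, $w$ lying in a shrunken Weyl chamber, or satisfying a superregularity condition on $\tW$) in such a way that every reduction path with endpoint in the $\sigma$-conjugacy class $[b]$ must terminate in a Levi whose associated Springer fiber contributes a single orbit. The main obstacle is precisely this step: one has to control the Levi data at the endpoints of \emph{all} contributing reduction paths uniformly, not merely for a single path. I expect this to require an inductive analysis of Deligne-Lusztig reduction steps, leveraging the classification of $\sigma$-conjugacy classes with integral Newton vector in split groups and the combinatorics of reduction trees developed for Theorem~\ref{thm:intro1}, to show that outside an explicit ``bad'' set of $w$ the Levi $\bM$ must collapse to $\bT$ (or at least to a Levi on which $\g_{\bM}$ is central and the orbit count is $1$).
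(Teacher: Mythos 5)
Your top-level strategy is exactly the paper's: reduce Theorem~\ref{thm:main-intro} to the claim that $n_{\underline p, \g}=1$ for every contributing reduction path, by combining Theorem~\ref{thm:intro1} with Proposition~\ref{prop:irr-adlv}. However, the argument you then sketch for $n_{\underline p, \g}=1$ is not correct, and it misses the genuine difficulty. You write that ``$\g_{\bM}$ can be arranged to be conjugate to a central translation $t^\lambda$ in $\bM$'' and suggest that the Levi $\bM$ collapses to $\bT$ for good $w$. Neither is true. Having integral Newton vector means $\g$ is conjugate into $\CI t^\mu \CI$ (equivalently, the associated straight conjugacy class is that of a pure translation), not that $\g$ is conjugate to $t^\mu$; $\g$ is a genuine regular semisimple element and, inside the Levi $\bM_{\nu_w}$ of its Newton centralizer, the affine Springer fiber $Y^{\bM_{\underline p}}_{K_{\underline p},\t}(\g')$ is highly nontrivial. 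Moreover the Levi at the endpoint of a reduction path is determined by the Newton point of $[b]$ and never shrinks to $\bT$ except in the regular Newton case. Indeed, for most parahoric flag varieties of $\bM$ the orbit count $n_{\underline p,\g}$ is strictly greater than $1$ (cf.\ Tsai's result quoted in the introduction that in the Iwahori case the count equals $|W_0|$).

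What actually makes $n_{\underline p, \g}=1$ in the paper is a combination of two inputs you do not invoke. First, Proposition~\ref{prop:special-path} shows that for $w$ in the antidominant chamber or with regular translation part, every cordial reduction path is \emph{very special}, i.e.\ its endpoint parahoric $\CP_{\underline p}$ gives a very special parahoric of $J_b$. This is proved by constructing, via the partial conjugation method (Lemmas~\ref{lem:path1}, \ref{lem:path2}), a path from $w_0 t^\mu$ down to $w$, and then importing the Zhu--Rapoport conjecture (Theorem~\ref{Chen-Zhu}(2), proved in \cite{HZZ} and \cite{Nie}) that stabilizers of top-dimensional components of $X_\mu(b)$ are very special parahorics. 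Second, since $\bG$ (hence $\bM_{\nu_w}$) is split, very special means hyperspecial, so $\CP_{\underline p}$ is a hyperspecial maximal parahoric of $\breve M$ and the relevant affine Springer fiber lives in the affine \emph{Grassmannian} of $\bM$. There one invokes the Ng\^o--Chi theorem (Theorem~\ref{thm:ngo}) that $Z_{\breve G}(\g)$ acts transitively on the regular locus of the affine Springer fiber in the affine Grassmannian, which together with Proposition~\ref{prop:irr-basic} gives $n_{\underline p,\g}=1$. Without these two ingredients --- the very-special reduction of Proposition~\ref{prop:special-path} and the transitivity theorem of Ng\^o--Chi --- the argument does not close, and the step ``$\g_{\bM}$ central hence the count is $1$'' is a false shortcut.
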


Here ``almost all'' means that the exceptional set for the pairs $(w, \{\g\})$ is small with respect to measure, not cardinality. We refer to Theorem \ref{thm:main} for the precise statement. 

As a special case, we deduce the explicit description on the irreducible components of affine Lusztig varieties in the affine Grassmannian for regular semisimple elements with integral Newton vector in split groups, verifying a conjecture of Chi \cite{Chi} for integral Newton points. 

\begin{theorem}
    If $\bG$ is split over $L$ and the Newton vector of $\g$ is integral, then for any dominant coweight $\mu$ such that $Y_\mu(\g) \neq \emptyset$, we have $$\sharp\bigl(Z_{\breve G}(\g) \backslash \Sigma^{{\rm top}}(Y^{\bG}_\mu(\g))\bigr)=\dim V_\mu(\l_\g),$$ where $V_\mu(\l_\g)$ is certain weight space of the irreducible representation $V_\mu$ of the Langlands dual group of $\bG$. 
\end{theorem}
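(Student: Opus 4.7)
The plan is to deduce this as a special case of the comparison theorem (Theorem \ref{thm:main-intro}) for the affine flag variety, combined with the projection $\pi\colon\Fl\to\Gr$ and the known formula for top irreducible components of affine Deligne-Lusztig varieties in the affine Grassmannian for unramified groups with integral Newton point.

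First, I would work with the unique element $w_\mu\in\tW$ of maximal length in the double coset $W_0 t^\mu W_0$, so that $\CI\dot w_\mu\CI$ is the open dense Iwahori orbit inside $\CK t^\mu\CK$. The restriction of $\pi$ yields a $Z_{\breve G}(\g)$-equivariant surjection $Y_{w_\mu}^{\bG}(\g)\to Y_\mu^{\bG}(\g)$ whose generic fiber is an open subset of the full flag variety $\CK/\CI$, in particular irreducible of constant dimension. Consequently
\[
\dim Y_{w_\mu}^{\bG}(\g)=\dim Y_\mu^{\bG}(\g)+\dim(\CK/\CI),
\]
and taking preimages induces a $Z_{\breve G}(\g)$-equivariant bijection
\[
\Sigma^{{\rm top}}(Y_{w_\mu}^{\bG}(\g))\;\longleftrightarrow\;\Sigma^{{\rm top}}(Y_\mu^{\bG}(\g)).
\]
The identical construction on the Frobenius-twisted side furnishes a $J_b$-equivariant bijection between $\Sigma^{{\rm top}}(X_{w_\mu}^{\bG'}(b))$ and $\Sigma^{{\rm top}}(X_\mu^{\bG'}(b))$.

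Second, I would apply Theorem \ref{thm:main-intro} to the pair $(w_\mu,\{\g\})$, after verifying that $w_\mu$ lies outside the measure-zero exceptional set. Because $w_\mu$ is distinguished inside its double coset and has a very rigid shape (a translation by a dominant element times a longest Weyl element), this should follow from the structure of the reduction tree of $w_\mu$ in the split, integral Newton setting; in any case, a fallback argument via Theorem \ref{thm:intro1} reduces the check to showing that each multiplicity $n_{\underline p,\g}$ appearing in the sum equals what is predicted. Granting this, the comparison theorem gives
\[
\sharp\bigl(Z_{\breve G}(\g)\backslash\Sigma^{{\rm top}}(Y_{w_\mu}^{\bG}(\g))\bigr)=\sharp\bigl(J_b\backslash\Sigma^{{\rm top}}(X_{w_\mu}^{\bG'}(b))\bigr),
\]
and chaining this with the two projection bijections from the first step reduces the theorem to the known identity
\[
\sharp\bigl(J_b\backslash\Sigma^{{\rm top}}(X_\mu^{\bG'}(b))\bigr)=\dim V_\mu(\l_b),
\]
established for split $\bG'$ and integral Newton vectors by the work of Xiao-Zhu, Hamacher-Viehmann, Nie, and Zhou-Zhu. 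The association of Theorem \ref{thm:ass} identifies $\l_b$ with $\l_\g$, closing the chain.

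The principal obstacle is the verification in the second step that the specific $w_\mu$ produced by each dominant $\mu$ with $Y_\mu^{\bG}(\g)\neq\emptyset$ indeed avoids the exceptional set of Theorem \ref{thm:main-intro}, since that theorem only controls "almost all" $w$ in the measure-theoretic sense. If this direct verification fails for sporadic $\mu$, the robust alternative is to bypass Theorem \ref{thm:main-intro} entirely: apply Theorem \ref{thm:intro1} to $w_\mu$, enumerate the reduction paths of correct length corresponding to $\{\g\}$, and match the weighted count $\sum_{\underline p}n_{\underline p,\g}$ directly with $\dim V_\mu(\l_\g)$ by invoking the Mirkovi\'c-Vilonen basis description of the weight space together with the structure of affine Springer fibers in the split integral-Newton regime.
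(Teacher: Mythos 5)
Your overall plan coincides with the paper's: pivot through the maximal-length element $w_\mu$ of $W_0 t^\mu W_0$ (which, in the paper's conventions, is exactly $w_0 t^\mu$), transfer top components between $\Gr$ and $\Fl$, apply the flag-variety comparison theorem, and conclude via the Chen--Zhu formula for $\Sigma^{\rm top}(X_\mu(b))$. The $X$-side bijection you construct is the one the paper cites from \cite[Proposition 2.4.10]{HZZ}, and the final identification $\l_b=\l_\g$ via the association of groups is the same.

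There is, however, one genuine gap. You assert that $\pi$ restricts to a \emph{surjection} $Y_{w_\mu}(\g)\to Y_\mu(\g)$ with irreducible generic fiber. That $\CI\dot w_\mu\CI$ is open dense in $\CK t^\mu\CK$ does \emph{not} imply that every $g\CK\in Y_\mu(\g)$ admits a lift $gk\CI$ with $k^{-1}(g^{-1}\g g)k\in\CI\dot w_\mu\CI$; the conjugation orbit of $g^{-1}\g g$ inside $\CK t^\mu\CK$ under $\CK$ need not meet the open stratum. As a consequence, top components of $Y_\mu(\g)$ could a priori fail to be dominated by $Y_{w_\mu}(\g)$, and your ``taking preimages'' step would not produce a bijection. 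The paper sidesteps this: it works with the full preimage $Y=\bigsqcup_{w\in W_0 t^\mu W_0}Y_w(\g)$, which \emph{is} a $Z_{\breve G}(\g)$-equivariant $\CK^{\mathrm{sp}}/\CI$-fibration over $Y_\mu(\g)$ (hence gives a genuine bijection on $\Sigma^{\rm top}$), and then invokes \cite[Theorem 9.1]{He-Ann} (transferred via Theorem \ref{thm:le}) to see that $\dim Y_w(\g)<\dim Y_{w_0 t^\mu}(\g)$ for every other $w$ in the double coset; only then does it conclude that $Y_{w_0 t^\mu}(\g)\hookrightarrow Y$ induces a bijection on top components. This dimension-count step is what your surjectivity claim silently replaces, and it is a real missing ingredient.

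On the other hand, your principal worry about the ``almost all'' hypothesis is unfounded: Theorem \ref{thm:main} is not merely measure-theoretic but is stated for the \emph{explicit} classes of $w$ that are in the antidominant chamber or have regular translation part, and $w_0 t^\mu$ (with $\mu$ dominant) is in the antidominant chamber by definition. So the verification you were concerned about is immediate, and the fallback you sketch (re-deriving the count from Theorem \ref{thm:intro1} and Mirkovi\'c--Vilonen basis considerations) is unnecessary.
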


\subsection{Strategy}

We first provide a brief review of the strategy and the main ingredients in \cite{He-ALV} to establish the comparison theorem on the dimension of affine Lusztig varieties and affine Deligne--Lusztig varieties. 

In \cite{He-ALV}, the basic strategy involves two parts:
\begin{itemize}

   \item base case: here $w$ is in the Weyl group of a standard parahoric subgroup $\CK_1$ of $\breve G$. In this case, the affine Lusztig variety $Y^{\bG}_w(\g)$ maps to the affine Springer fiber $Y$ associated with $\g$ in the partial flag variety $\breve G/\CK_1$, and the fibers are the (classical) Lusztig varieties. 
    \begin{itemize}
        \item The affine Springer fiber $Y$ consists of two parts: the regular locus and the non-regular locus;

        \item By the work of Kazhdan and Lusztig \cite{KL} on the regular locus of $Y$ and a detailed analysis on the classical Lusztig varieties, the subvariety of $Y^{\bG}_w(\g)$ over $Y^{\reg}$ has the desired dimension.
    \end{itemize}
    \item Inductive step: We apply Deligne-Lusztig reduction \cite{He-Ann} and Hodge-Newton decomposition \cite{GHKR10}, \cite{He-ALV} to reduce to the case where $w$ is in a(n extended) standard parahoric subgroup of $\tW$.
\end{itemize}

It is worth noting that the reduction paths in the inductive step are rather complicated and that there is no good way to keep track of them.

However, for the affine Lusztig varieties and affine Deligne-Lusztig varieties, they share the same reduction paths. This makes it possible to compare the dimensions of these two families of schemes. 

Following a similar strategy, we prove Theorem \ref{thm:intro1}. The proof of Theorem \ref{thm:main-intro} is more subtle. As mentioned above, the comparison result \eqref{eq:main} is not valid for all $w \in \tW$. Thus one needs to carefully choose the base case to start with. 

\begin{itemize}
\item base case : here $\bG$ is split over $L$ and $\g$ is an element in $\CI$. 

\begin{itemize}
\item By the works of Ngo \cite{Ngo} and Chi \cite{Chi}, \cite{Chi2}, the centralizer $Z_{\breve G}(\g)$ acts transitively on the regular locus of affine Springer fiber associated with $\g$ in the affine Grassmannian;

\item By \cite{HL}, the classical Lusztig varieties associated with the elliptic Weyl group element and regular element in the group is irreducible. 
\end{itemize}

Combining these works, the base case for the irreducible components is where $w$ is an elliptic element in the hyperspecial subgroup of $\tW$. 

\item Inductive step:

     \begin{itemize}
         \item By the work of \cite{HZZ} and \cite{Nie} on the conjecture of X. Zhu and M. Rapoport for the $J_b$-action on the affine Deligne-Lusztig varieties in the affine Grassmannian. These works provide crucial information on the structure of affine Deligne–Lusztig varieties associated with $w_0 t^\mu$ in the affine flag variety.
        \item Partial Conjugation Method in \cite{He07}: This technique allows us to handle some part of the reduction paths more effectively and to extend the results from the elements of the form $w_0 t^\mu$ to "almost all" elements in $\tilde{W}$.
     \end{itemize}
Using these strategies and tools, we can establish the main comparison theorem on the irreducible components of affine Lusztig varieties and affine Deligne–Lusztig varieties for a broad class of elements in $\tilde W$.
\end{itemize}

\smallskip

{\bf Acknowledgement: } XH is partially supported by the New Cornerstone Science Foundation through the New Cornerstone Investigator Program and the Xplorer Prize, and by Hong Kong RGC grant 14300122. We thank Jingren Chi, George Lusztig, Michael Rapoport, Cheng-Chiang Tsai and Zhiwei Yun for helpful discussions and suggestions, and Felix Schremmer for careful reading of a preliminary version of this paper. 

\section{Preliminary}
\subsection{Loop groups and  Iwahori--Weyl groups}\label{sec:1.1}
Let $\kk$ be an algebraically closed field and $L=\kk(\!(\e)\!)$ be the field of the Laurent series or $L=W(\kk)[1/p]$ if $\kk$ is characteristic $p$, where $W(\kk)$ is the ring of $p$-typical Witt vectors. Let $\bG$ be a connected reductive group over $L$ and $\breve G=\bG(L)$. Let $\breve G^{\rs}$ be the set of regular semisimple elements in $\breve G$.

We choose a maximal split torus $\bS$ of $\bG$. Let $\bT$ be the centralizer of $\bS$ in $\bG$. Then $\bT$ is a maximal torus of $\bG$. The relative Weyl group of $\breve G$ is defined to be $W_0=\bN(\bT)(L)/\bT(L)$, where $\bN(\bT)$ is the normalizer of $\bT$. The Iwahori--Weyl group of $\breve G$ is defined as $\tW=\bN(\bT)(L)/\bT(L)_0$, where $\bT(L)_0$ is the (unique) parahoric subgroup of $\bT(L)$. 

Consider the apartment $\CA$ of $\bG$ corresponding to $\bS$. Fix an alcove $\mathfrak{a}$ in $\CA$, and let $\CI$ be the associated Iwahori subgroup of $\breve G$.  The simple reflections are reflections along the wall of the base alcove $\mathfrak a$. Let $\tilde \BS$ be the set of simple reflections of $\tW$. By choosing a special vertex $v_0$ in $\mathfrak{a}$, $\tW$ can be expressed as the semidirect product $$\tW=X_*(\bT)_{\G_0} \rtimes W_0=\{x t^\l; \l \in X_*(\bT)_{\G_0}, x \in W_0\},$$ where $X_*(\bT)_{\G_0}$ is the set of coinvariants of the coweight lattice $X_*(\bT)$ under the action of $\G_0=\text{Gal}(\bar L/L)$. The set $\BS = \tilde \BS \cap W_0$ consists of simple reflections of $W_0$. For any $w \in \tW$, we select a representative $\dot{w}$ in $\bN(\bT)(L)$. In this paper, we assume that either $\text{char}(\kk)=0$ or $\text{char}(\kk)>0$ does not divide the order of relative Weyl group $W_0$ of $G$.

The dominant Weyl chamber of $V = X_*(T)_{\G_0} \otimes \BR$ is, by convention, opposite to the unique Weyl chamber containing $\mathfrak{a}$. Let $\D$ denote the set of relative simple roots determined by this dominant Weyl chamber, and $w_0$ the longest element of $W_0$. 

A subset $K$ of $\tilde \BS$ is called {\it spherical} if the subgroup $W_K$ of $\tW$ generated by $s \in K$ is finite. In such cases, $\CP_K$ denotes the standard parahoric subgroup of $\breve G$ generated by $\CI$ and $\dot{w}$ for $w \in W_K$. The subgroup $\breve G_{\af}$ of $\breve G$ is generated by all the parahoric subgroups, and the Iwahori--Weyl group of $\breve G_{\af}$ is denoted by $W_{\af}$. The group $W_{\af}$ is a subgroup of $\tW$ generated by $\tilde \BS$. Let $\Omega$ be the set of length-zero elements in $\tW$. Then, we have the decomposition $\tW = W_{\af} \rtimes \Omega$. 

For any subsets $J$ and $K$ of $\tilde \BS$, we denote by ${}^J \tW$ (resp. $\tW^J$) the set of minimal representatives of $W_J \backslash \tW$ (resp. $\tW / W_J$). The notation ${}^J \tW^K$ stands for ${}^J \tW \cap \tW^K$.




\subsection{The map from $B(\bG)$ to $B_\s(\bG')$}\label{sec:ass}
For any $\g \in \breve G$, let $\{\g\}=\{g \g g \i; g \in \breve G\}$ be the conjugacy class of $\g$. Let $B(\bG)$ be the set of conjugacy classes of $\breve G$. 

Let $F' = \mathbb{F}_q(\!(\e)\!)$ and $\breve F' = \overline{\mathbb{F}}_q(\!(\e)\!)$. Consider a connected reductive group $\bG'$ that is residually split over $F'$, with $\breve G' = \bG'(\breve F')$. Let $\s$ denote the Frobenius morphism of $\breve F'$ over $F'$. We use the same symbol $\s$ for the induced Frobenius endomorphism on $\breve G'$. We fix a $\s$-stable Iwahori subgroup $\CI'$ of $\breve G'$. The Iwahori--Weyl group of $\breve G'$ is denoted by $\tW'$, and the relative Weyl group is denoted by $W'_0$. Let $\G'_0$ be the absolute Galois group of $F'$. We define $X_*(\bT')_\BQ^+$ in a similar way as in \S\ref{sec:1.1}. For any $b \in \breve G'$, let $[b]=\{g b \s(g) \i; g \in \breve G'\}$ be the $\s$-conjugacy class of $b$. Let $B_\s(\bG')$ be the set of $\s$-conjugacy classes of $\breve G'$.  
 

We recall Kottwitz's classification of $B_\s(\bG')$ in ~\cite{kottwitz-isoI,kottwitz-isoII} and its analog on $B(\bG)$ obtained in  ~\cite{KV} and ~\cite{HN20}. 

\begin{theorem}\label{thm:ass}
(1) There is an embedding $$f_\s: B_\s(\bG') \to \pi_1(\bG')_{\G_0'} \times X_*(\bT')_\BQ^+, \quad [b] \mapsto (\k([b]), \nu_{[b]}).$$ 

(2) There is a natural map $$f: B(\bG) \to \pi_1(\bG)_{\G_0} \times X_*(\bT)_\BQ^+, \quad \{\g\} \mapsto (\k(\{\g\}), \nu_{\{\g\}}).$$
\end{theorem}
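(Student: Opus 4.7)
The plan is to assemble the two parts by recalling the construction of the Newton and Kottwitz invariants, in the $\sigma$-twisted and untwisted settings respectively, and to verify that together they give the desired map on (co)-invariants of conjugation.

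For part (1), I would follow Kottwitz \cite{kottwitz-isoI,kottwitz-isoII} directly. First, construct the Newton map: given $b\in\breve G'$, choose $n\ge 1$ with $b\sigma(b)\sigma^2(b)\cdots \sigma^{n-1}(b) = \dot{w}$ representing an element of $X_*(\bT')_{\G'_0}$, and define $\nu_b\in X_*(\bT')_\BQ$ by dividing by $n$; the dominant $W'_0$-representative $\nu_{[b]}\in X_*(\bT')_\BQ^+$ is then independent of the choice of $n$ and of the representative of $[b]$. Second, construct $\kappa$ via the functorial abelianization map $\breve G'\to \bG'^{\mathrm{ab}}(\breve F')$ composed with the connecting homomorphism to $\pi_1(\bG')_{\G'_0}$, checking $\sigma$-invariance. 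Injectivity of $f_\sigma$ is then Kottwitz's main theorem, proved by reducing via central isogenies to tori and to simply connected semisimple groups, where it follows from Steinberg's theorem on vanishing of $H^1$.

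For part (2), I would imitate the same construction in the untwisted setting, following \cite{KV} and \cite{HN20}. The Kottwitz map is directly analogous: apply the abelianization $\breve G\to \bG^{\mathrm{ab}}(L)$ and the connecting map to $\pi_1(\bG)_{\G_0}$, which is constant on conjugacy classes since $\pi_1$ is abelian. For the Newton map, one passes to the semisimple part of the Jordan decomposition of $\gamma$ (needed for general $\gamma$; for regular semisimple $\gamma$ used in this paper this is just $\gamma$ itself), then to an algebraic closure where $\gamma_{\mathrm{ss}}$ is conjugate into $\bT$; the resulting element of $X_*(\bT)_\BQ$ is well-defined modulo $W_0$ and, after taking $\G_0$-coinvariants and the dominant representative, yields $\nu_{\{\gamma\}}\in X_*(\bT)_\BQ^+$. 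Both constructions are manifestly invariant under $\breve G$-conjugation.

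The main obstacle here is not novelty but bookkeeping: one must check that the two definitions of $\nu$ in the untwisted case (via Jordan decomposition versus via the ``twisted'' construction with $\sigma=\mathrm{id}$) agree, and that passing to $\G_0$-coinvariants is compatible with the dominance normalization. This is handled in \cite{KV,HN20}. Note that, in contrast to part (1), the map $f$ in part (2) need not be injective (e.g.\ distinct regular semisimple conjugacy classes may share the same Newton and Kottwitz invariants), which is why the statement only asserts a \emph{natural map} rather than an embedding; consequently no injectivity argument is required.
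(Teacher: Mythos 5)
The paper does not prove this statement; it is presented as a recollection of known results, with part (1) attributed to Kottwitz \cite{kottwitz-isoI,kottwitz-isoII} and part (2) to Kottwitz--Viehmann \cite{KV} and He--Nie \cite{HN20}. Indeed the paper explicitly declines to define $f$ and $f_\s$, giving only the explicit formulas on the Iwahori--Weyl group. Your sketch is therefore doing more than the paper does, but it is a reasonable account of what the cited sources establish: the Kottwitz invariant via abelianization and the connecting map to $\pi_1$; the Newton point via the slope construction in the $\s$-twisted case and via semisimplification in the untwisted case; injectivity in (1) by devissage to tori and simply connected groups. Two small imprecisions worth flagging. First, in (1) the equation $b\s(b)\cdots\s^{n-1}(b)=\dot w$ with $\dot w\in\bN(\bT')(\breve F')$ representing an element of $X_*(\bT')_{\G'_0}$ does not hold for an arbitrary representative $b$ of the class; one must first $\s$-conjugate $b$ into a standard form (for instance $b=\dot w$ for $w$ a straight element of the Iwahori--Weyl group, as in the paper's discussion of $\nu_w$), after which your formula gives the Newton point. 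Second, calling the conjugation-invariance of the Newton map in (2) ``manifest'' is a little glib: one must check that passing to the semisimple part, conjugating into $\bT$, and projecting to $X_*(\bT)_\BQ^+$ is well-defined independently of all choices and compatible with taking $\G_0$-coinvariants; this is precisely the bookkeeping done in \cite{KV,HN20}, which you acknowledge. Your final observation --- that (1) is an embedding but (2) is only a map, so no injectivity argument is needed for (2) --- is correct and is consistent with the paper's remark immediately following the theorem that $f_{\bG,\bG'}$ is surjective but in general not injective.
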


We will not give the precise definition of the maps $f$ and $f_\s$ here, but we will provide explicit descriptions of their restrictions to the Iwahori--Weyl groups.

Let $w \in \tW$. There exists a positive integer $n$ such that $w^n=t^\l$ for some $\l \in X_*(\bT)_{\G_0}$. Set $\nu_w=\frac{\l}{n} \in X_*(\bT)_{\BQ}$. The element $\nu_w$ is independent of the choice of $n$. Let $\bar \nu_w$ be the unique dominant coweight in the $W_0$-orbit of $\nu_w$. We also have a natural identification $\tW/W_{\af} \cong \Omega \cong \pi_1(\bG)_{\G_0}$. We consider the maps $f_{\tilde W}: \tW \to \pi_1(\bG)_{\G_0} \times X_*(\bT)_\BQ^+$ given by $w \mapsto (w W_{\af}, \bar \nu_w)$. 
By definition, an element $w \in \tW$ is called {\it straight} if $\ell(w^n)=n \ell(w)$ for all $n \in \BN$. A conjugacy class of $\tW$ is called {\it straight} if it contains a straight element. We denote by $\tW \sslash \tW$ the set of straight conjugacy classes of $\tW$. By~\cite[Theorem 3.3]{HN14}, this map induces an injection $\tW \sslash \tW \to \pi_1(\bG)_{\G_0} \times X_*(\bT)_\BQ^+$. Similarly, we have an injection $\tW' \sslash \tW' \to \pi_1(\bG')_{\G'_0} \times X_*(\bT')_\BQ^+$.

We say that the group $\bG'$ over $F'$ is {\it associated with} the group $\bG$ over $L$ if we have a length-preserving isomorphism $\tW \cong \tW'$ that is compatible with the semidirect products, that is
\[
\xymatrix{X_*(T)_{\G_0} \rtimes W_0 \ar[d]^{\cong} \ar@{=}[r] & \tW \ar[d]^{\cong} \\ X_*(T')_{\G'_0} \rtimes W'_0  \ar@{=}[r] & \tW'
}
\]
In particular, we require that this isomorphism induces isomorphisms on the coweight lattices and on the relative Weyl groups and a bijection on the set of simple reflections. By the classification of reductive groups, for any $\bG$, there exists a connected reductive group $\bG'$ over $F'$ that is associated with it. 

Suppose that $\bG'$ is associated with $\bG$. Then we may identify $\tW'$ with $\tW$. Under this identification, we have $\tW \sslash \tW=\tW' \sslash \tW'$. By \cite[\S 1.6]{He-ALV}, there exists a map  $f_{\bG, \bG'}: B(\bG) \to B_\s(\bG')$ such that the following diagram commutes:
\[
\xymatrix{
B(\bG) \ar@{->>}[r]^{f_{\bG, \bG'}} \ar@{->>}[d] & B_\s(\bG') \ar[d] \\
\tW \sslash \tW \ar@{=}[r] & \tW' \sslash \tW' \ar[u]}
\]
Note that $B_\s(\bG')$ is in natural bijection with $\tW \sslash \tW$. Thus the map $f_{\bG, \bG'}: B(\bG) \to B_\s(\bG')$ is surjective. However, this map is not injective in general. 

\subsection{Affine Lusztig Varieties and Affine Deligne--Lusztig varieties}\label{sec:ADLV}

The affine flag variety $\Fl$ of the group $\breve G$ is defined as $\Fl = \breve G / \CI$, where $\CI$ denotes an Iwahori subgroup. When considering the maximal special parahoric subgroup $\CK^{\mathrm{sp}} \supset \CI$ corresponding to a special vertex $v_0$, we also define the affine Grassmannian as $\Gr = \breve G / \CK^{\mathrm{sp}}$. For $L$ being the field of Laurent series, both the affine flag variety and the affine Grassmannian possess natural ind-scheme structures. In a mixed characteristic context, these varieties are viewed as perfect ind-schemes according to Zhu~\cite{Zhu} and Bhatt–Scholze~\cite{BS}.

Let $X_*(T)_{\G_0}^+$ denote the set of dominant coweights. The group $\breve G$ can be decomposed into disjoint unions: $$\breve G = \bigsqcup_{w \in \tW} \CI \dot w \CI, \qquad \breve G = \bigsqcup_{\mu \in X_*(T)_{\G_0}^+} \CK^{\mathrm{sp}} t^\mu \CK^{\mathrm{sp}}.$$

For any $\g \in \breve G$ and $w \in \tW$, the {\it affine Lusztig variety} $Y_w(\g)$ in the affine flag variety $\Fl$ is defined by $$Y_w(\g)=\{g \CI \in \Fl; g \i \g g \in \CI \dot w \CI\}.$$ For $w \in \tW$ with $\ell(w)=0$ and $\g \in \CI \dot w \CI$, we have $\CI \dot w \CI=\CI \g$, and $Y_w(\g)=\Fl^{\g}$ is the affine Springer fiber.

For any dominant coweight $\mu$ and $\g \in \breve G$, the {\it affine Lusztig variety} $Y_\mu(\g)$ in the affine Grassmannian $\Gr$ is defined by $$Y_\mu(\g)=\{g \CK^{\mathrm{sp}} \in \Gr; g \i \g g \in \CK^{\mathrm{sp}} t^\mu \CK^{\mathrm{sp}}\}.$$



By definition, $Y_w(\g)$ forms a locally closed ind-subscheme of $\Fl$, and $Y_\mu(\g)$ forms a locally closed ind-subscheme of $\Gr$. The centralizer of $\g$ in $\breve G$, denoted by $Z_{\breve G}(\g)=\{g \in \breve G; g \i \g g=\g\}$, acts on both the affine Lusztig varieties $Y_w(\g)$ and $Y_\mu(\g)$.  

Note that the isomorphism classes of affine Lusztig varieties depend on the conjugacy class $\{\g\}$, not the representative $\g$ within $\{\g\}$. 



Similarly, for $b \in \breve G'$ and $w \in \tW'$, the affine Deligne--Lusztig variety $X_w(b)$ in the affine flag variety is defined by $$X_w(b)=\{g \CI\in \Fl'; g \i b \s(g) \in \CI' \dot w \CI'\}$$ and the affine Deligne--Lusztig variety in the affine Grassmannian $\Gr'=\breve G'/\CK'^{, \mathrm{sp}}$ defined by $$X_\mu(b)=\{g \CK'^{, \mathrm{sp}} \in \Gr'; g \i b \s(g) \in \CK'^{, \mathrm{sp}} t^\mu \CK'^{, \mathrm{sp}}\}.$$

It is known that  affine Deligne--Lusztig varieties are subschemes of locally finite type in the affine flag variety (resp. affine Grassmannian). Let $\bJ_b$ be the $\s$-centralizer of $b$ and $J_b=\{g \in \breve G'; g \i b \s(g)=b\}$ be the group of $F'$-rational points of $\bJ_b$. Then $J_b$ acts naturally on the affine Deligne--Lusztig varieties associated with $b$. 

Note that the isomorphism classes of affine Deligne--Lusztig varieties depend on the $\s$-conjugacy class $[b]$, not on the representative $b$ within $[b]$.



\subsection{Hodge-Newton decompositions}\label{sec:HN}
For $w \in \tW$, let $\bP_{\nu_w}$ be the parabolic subgroup of $\bG$ generated by $\bT$ and the root subgroups $\mathbf U_\a$ with $\<\a, \nu_w\> \ge 0$, $\bM_{\nu_w}$ be the Levi subgroup of $\bP_{\nu_w}$ generated by $\bT$ and the root subgroups $\mathbf U_\a$ with $\<\a, \nu_w\>=0$. Let $\breve M_{\nu_w}=\bM_{\nu_w}(\breve F)$ and $\CI_{\bM_{\nu_w}}=\CI \cap \breve M_{\nu_w}$ be an Iwahori subgroup of $\breve M_{\nu_w}$. Let $\tW_{\nu_w}$ be the Iwahori--Weyl group of $\breve M_{\nu_w}$. We have the following Hodge-Newton decompositions for the affine Deligne--Lusztig varieties and affine Lusztig varieties.


\begin{theorem}~\cite[Theorem 2.1.4]{GHKR10}\label{thm:HN}
Suppose that $w$ is a minimal length element in its conjugacy class in $\tW$. Then $X_w(b) \neq \emptyset$ implies that $[b] \cap \bM'_{\nu_w}(\breve F') \neq \emptyset$. Moreover, for $b \in \bM'_{\nu_w}(\breve F')$, we have $\bJ_b^{\bM'_{\nu_w}}(F')=\bJ_b^{\bG'}(F')$, where $\bJ_b^{\bM'_{\nu_w}}(F')$ is the $\s$-centralizer of $b$ in $\bM'(\breve F)$, and $$X^{\bM'_{\nu_w}}_w(b) \cong X^{\bG'}_w(b).$$
\end{theorem}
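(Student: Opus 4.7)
The plan is to reduce the theorem to the Iwasawa/Levi structure of the parabolic $\bP'_{\nu_w} = \bM'_{\nu_w} \bN'_{\nu_w}$ determined by the Newton cocharacter $\nu_w$, together with the fact that $\s$-conjugation by an element of Newton slope $\nu_w$ strictly contracts $\bN'_{\nu_w}(\breve F')$. The first step is to establish that, under the minimal length hypothesis on $w$, the element $w$ already lies in $\tW_{\nu_w}$. The idea is that the Newton point $\nu_w$ is the average of the translation part along the cyclic orbit of $w$, hence is $w$-invariant, so $w$ normalizes $\bM'_{\nu_w}$; combined with cyclic conjugation by simple reflections in the affine Coxeter structure on $\tW$, the minimal length assumption rules out any reflection factor of $w$ outside $\tW_{\nu_w}$, since such a factor would allow a cyclic shift that strictly shortens $w$.

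The second step is to stratify $X^{\bG'}_w(b)$ via the Iwasawa decomposition $\breve G' = \bigsqcup_v \CI' \dot v \bP'_{\nu_w}(\breve F')$, where $v$ runs through a fixed set of representatives for the relevant double cosets attached to $\bM'_{\nu_w}$. For a point $g = \dot v\, p\, k$ in the $v$-stratum with $p \in \bP'_{\nu_w}(\breve F')$ and $k \in \CI'$, the defining condition $g\i b \s(g) \in \CI' \dot w \CI'$ translates to an equation for $p$. Because $w \in \tW_{\nu_w}$, the Iwahori double coset $\CI' \dot w \CI'$ is contained in the standard parahoric attached to $\bM'_{\nu_w}$, so projecting modulo $\bN'_{\nu_w}(\breve F')$ produces an element of $\bM'_{\nu_w}(\breve F')$ representing a conjugate of $w$. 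The contraction of $\bN'_{\nu_w}(\breve F')$ under $\s$-conjugation then allows an iterative/convergent solution for the unipotent component of $p$, absorbing it into $\CI'$ and simultaneously $\s$-conjugating $b$ into $\bM'_{\nu_w}(\breve F')$. This forces only the $v = 1$ stratum to contribute, yielding both $[b] \cap \bM'_{\nu_w}(\breve F') \neq \emptyset$ and the identification $X^{\bG'}_w(b) \cong X^{\bM'_{\nu_w}}_w(b)$.

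The centralizer identification $\bJ^{\bM'_{\nu_w}}_b(F') = \bJ^{\bG'}_b(F')$ is then a formal consequence of the Newton decomposition: for $b \in \bM'_{\nu_w}(\breve F')$ with $\s$-Newton vector $\nu_w$, the cocharacter $\nu_w$ is central in $\bM'_{\nu_w}$, and any $g \in \bJ^{\bG'}_b(F')$ must preserve the $\nu_w$-weight decomposition of $\Lie \bG'$, forcing $g \in \bM'_{\nu_w}(\breve F')$. Taking $F'$-points gives the equality.

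The hard part will be making the contraction/convergence argument of the second paragraph rigorous, namely solving an equation of the form $n\i m \s(n) \in \CI' \dot w \CI'$ for an unknown $n \in \bN'_{\nu_w}(\breve F')$, given $m \in \bM'_{\nu_w}(\breve F')$ and an approximate solution. This requires a careful analysis of the affine-root filtration of $\bN'_{\nu_w}(\breve F')$ with respect to the valuation induced by $\nu_w$, together with a Banach-style fixed-point argument inside the pro-unipotent completion, exploiting that pairing with $\nu_w$ gives strictly positive values on all relevant affine roots. Once this contraction step is in place, the rest of the theorem reduces to bookkeeping of Iwasawa strata.
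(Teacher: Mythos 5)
The paper does not give its own proof of this statement --- it is cited verbatim from Görtz--Haines--Kottwitz--Reuman \cite{GHKR10}. Judged against that source, your proposal follows the same high-level strategy (Iwasawa/Levi stratification and a contraction argument on the unipotent radical), but it contains a concrete error in a key step and omits the essential combinatorial input that makes the contraction work.

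The problematic sentence is: ``Because $w \in \tW_{\nu_w}$, the Iwahori double coset $\CI' \dot w \CI'$ is contained in the standard parahoric attached to $\bM'_{\nu_w}$, so projecting modulo $\bN'_{\nu_w}(\breve F')$...''. First, you appear to be conflating a parahoric subgroup of $\breve G'$ with the parabolic subgroup $\bP'_{\nu_w}(\breve F')$ --- these are quite different objects. Second, under neither reading is this containment true: the Iwahori decomposes as $\CI' = (\CI' \cap \breve N'^-_{\nu_w}) (\CI' \cap \breve M'_{\nu_w})(\CI' \cap \breve N'_{\nu_w})$, so $\CI'$ itself has a nontrivial part in the opposite unipotent radical and cannot lie in $\bP'_{\nu_w}(\breve F')$, let alone in a parahoric of $\breve M'_{\nu_w}$. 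Consequently one cannot ``project $\CI' \dot w \CI'$ modulo $\bN'_{\nu_w}$'' to get a well-defined element of $\breve M'_{\nu_w}$, and the setup for your fixed-point iteration collapses at the first step. The ingredient you are missing is precisely the \emph{$P$-alcove} condition of \cite{GHKR10}: for the contraction to work, one needs not merely $w \in \tW_{\nu_w}$ but the stronger combinatorial fact that $\dot w\i (\CI' \cap \breve N'_{\nu_w}) \dot w \subset \CI' \cap \breve N'_{\nu_w}$. It is this containment, established for minimal length elements by the structure theory in He--Nie, that makes conjugation by $\dot w$ (and then by an element of $\breve N'_{\nu_w}$ iteratively) a strict contraction on the filtered pieces of the unipotent radical, forces the Iwasawa stratum to be the trivial one, and identifies the fibers over the Levi. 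Your first step ($w \in \tW_{\nu_w}$ from minimal length via cyclic shift) is also under-justified --- what is needed and true is the full $P_{\nu_w}$-alcove property, not just normalization of $\bM'_{\nu_w}$ --- but that is a secondary issue next to the misstatement above. The final paragraph identifying $\bJ_b^{\bM'_{\nu_w}}(F') = \bJ_b^{\bG'}(F')$ via the centrality of $\nu_w$ in $\bM'_{\nu_w}$ is fine.
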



\begin{theorem}\cite[Theorem 3.4]{He-ALV}\label{thm:HN1}
Suppose that $w$ is a minimal length element in its conjugacy class in $\tW$. Then, for any $\g \in \breve G^{\rs}$, $Y_w(\g) \neq \emptyset$ if and only if $\{\g\} \cap \CI_{\bM_{\nu_w}} \dot w \CI_{\bM_{\nu_w}} \neq \emptyset$. Moreover, for $\g \in \CI_{\bM_{\nu_w}} \dot w \CI_{\bM_{\nu_w}}$, we have $Z_{\breve M_{\nu_w}}(\g)=Z_{\breve G}(\g)$ and  $$Y^{\bM_{\nu_w}}_w(\g) \cong Y^{\bG}_w(\g).$$
\end{theorem}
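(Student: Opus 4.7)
The plan is to adapt the standard Hodge--Newton strategy used for affine Deligne--Lusztig varieties (Theorem~\ref{thm:HN}) to the conjugation setting. The crucial geometric input is that, since $w$ is a minimal-length element of its conjugacy class and $\nu_w$ is $w$-fixed by construction, $w$ must lie in the Iwahori--Weyl group $\tW_{\nu_w}$ of the Levi $\bM_{\nu_w}$; moreover, the adjoint action of $\dot w$ and $\dot w^{-1}$ contracts the intersections of $\CI$ with the unipotent radical $\breve U_{\nu_w}$ and its opposite $\breve U_{\nu_w}^-$, respectively. The length equality $\ell(w)=\ell_{\bM_{\nu_w}}(w)$ in this situation is what prevents any ``inflation'' under $\Ad(\dot w)^{\pm 1}$ on those Iwahori-intersections.

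First I would dispose of the easy direction: if $\{\g\}$ meets $\CI_{\bM_{\nu_w}}\dot w\CI_{\bM_{\nu_w}}\subset\CI\dot w\CI$, then $Y_w^\bG(\g)\ne\emptyset$, since the natural map $\breve M_{\nu_w}/\CI_{\bM_{\nu_w}}\to\breve G/\CI$ carries a point of $Y_w^{\bM_{\nu_w}}(\g)$ to a point of $Y_w^\bG(\g)$.

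The core step is the converse. Suppose $g\in\breve G$ satisfies $g^{-1}\g g\in\CI\dot w\CI$. Using the Iwahori decomposition $\CI=(\CI\cap\breve U_{\nu_w}^-)\cdot\CI_{\bM_{\nu_w}}\cdot(\CI\cap\breve U_{\nu_w})$ and the fact that $w\in\tW_{\nu_w}$, I would iteratively adjust $g$ by elements of $\breve U_{\nu_w}$ and $\breve U_{\nu_w}^-$ so as to absorb the unipotent components of $g^{-1}\g g$ into $\dot w$. The contracting behaviour of $\Ad(\dot w)^{\pm 1}$ on the Moy--Prasad filtrations of $\CI\cap\breve U_{\nu_w}^{\pm}$ guarantees that the resulting sequence of adjustments converges in the loop-group topology to a conjugate of $\g$ lying in $\CI_{\bM_{\nu_w}}\dot w\CI_{\bM_{\nu_w}}$; the hypothesis $\g\in\breve G^{\rs}$ rules out pathological stabilizers of the Newton datum that would obstruct this iteration.

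Finally, for the centralizer identification, any element of $\CI_{\bM_{\nu_w}}\dot w\CI_{\bM_{\nu_w}}$ has Newton vector conjugate to $\nu_w$, which is central in $\bM_{\nu_w}$; hence its centralizer in $\breve G$ must commute with the one-parameter subgroup attached to $\nu_w$ and therefore lies in $\breve M_{\nu_w}$. The reverse inclusion is trivial. For the isomorphism $Y_w^{\bM_{\nu_w}}(\g)\cong Y_w^{\bG}(\g)$, I would verify that the map induced by $\breve M_{\nu_w}/\CI_{\bM_{\nu_w}}\hookrightarrow\breve G/\CI$ is bijective on points: injectivity follows from the Iwahori decomposition applied to potential differences in $\CI$, while surjectivity follows by running the same contracting-iteration argument coset-by-coset, i.e., by replacing each $g\CI$ with an explicit representative in $\breve M_{\nu_w}$. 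The main obstacle is this contraction step: one must track carefully the Moy--Prasad depths of the unipotent pieces under the iteration and establish convergence, which is the same technical input that underlies the affine Deligne--Lusztig Hodge--Newton decomposition of Theorem~\ref{thm:HN} but now for ordinary (as opposed to $\sigma$-twisted) conjugation.
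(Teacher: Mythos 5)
The overall strategy you outline --- Iwasawa decomposition of $\breve G$ relative to $\bP_{\nu_w}$, then absorbing the $\breve U_{\nu_w}^{\pm}$-components into the Iwahori by an iterated adjustment, in parallel with the proof of Theorem~\ref{thm:HN} --- is the natural one and is, to the best of my reading, the spirit of the argument in \cite{He-ALV}. But there is a genuine gap at the technical heart, namely in the sentence asserting that the Moy--Prasad contraction ``converges in the loop-group topology'' and that $\g\in\breve G^{\rs}$ merely ``rules out pathological stabilizers.'' In the $\sigma$-twisted setting of Theorem~\ref{thm:HN}, the absorption step relies on Lang's theorem applied to the twisted map $u \mapsto u^{-1}\, b\sigma(u)\,b^{-1}$ on congruence pieces of $\breve U_{\nu_w}$; surjectivity holds for \emph{arbitrary} $b$, which is why the $\sigma$-twisted Hodge--Newton decomposition needs no regular-semisimplicity hypothesis. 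In the ordinary conjugation setting, the relevant map is $u\mapsto u^{-1}\,(\Ad(\g)u)$ on $\CI\cap\breve U_{\nu_w}$, and its surjectivity is \emph{not} automatic: it requires that $\Ad(\g)-1$ be invertible on the graded pieces of the Moy--Prasad filtration of $\Lie\breve U_{\nu_w}$. That invertibility is where regular semisimplicity enters --- the eigenvalues of $\Ad(\g)$ on the $\a$-root spaces with $\langle\a,\nu_w\rangle>0$ avoid $1$ --- and, separately, the minimal-length hypothesis on $w$ is exactly what guarantees that $\Ad(\g)$ (for $\g\in\CI_{\bM_{\nu_w}}\dot w\CI_{\bM_{\nu_w}}$) strictly raises the depth on each affine root subgroup of $\CI\cap\breve U_{\nu_w}$, so that the iteration makes sense. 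Your write-up conflates these two inputs and does not identify either one precisely; as stated, the ``convergence'' is not justified, and the proof would not go through for a non-minimal $w$ or for a non-regular $\g$.

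Two smaller points. First, you invoke only the Iwahori decomposition of $\CI$, but to land a representative of $g\CI$ inside $\breve M_{\nu_w}$ you also need an Iwasawa-type decomposition of the coset $g\CI$ itself (writing $g$ as an element of $\breve P_{\nu_w}\cdot\CI$, and then stripping off the $\breve U_{\nu_w}$-factor of the $\breve P_{\nu_w}$-part); this is implicit in ``replacing each $g\CI$ with an explicit representative in $\breve M_{\nu_w}$'' but should be made explicit, since it is where the claim $Y^{\bM_{\nu_w}}_w(\g)\cong Y^{\bG}_w(\g)$ actually comes from. Second, the centralizer argument is essentially right but needs one more step: the Newton cocharacter $\nu_\g$ is canonically a fractional cocharacter of $T'=Z_{\breve G}(\g)^\circ$ (not of $\bT$), and the reason $Z_{\breve G}(\nu_\g)=\breve M_{\nu_w}$ is that, because $w$ is minimal length and $\bM_{\nu_w}=Z_\bG(\nu_w)$, the Newton point of $\g$ inside $\breve M_{\nu_w}$ is \emph{central} in $\bM_{\nu_w}$ and vanishes precisely on the $\bM_{\nu_w}$-roots; identifying $Z_{\breve G}(\g)\subset Z_{\breve G}(\nu_\g)=\breve M_{\nu_w}$ therefore works, but ``commutes with the one-parameter subgroup attached to $\nu_w$'' elides the fact that a priori $\nu_\g$ and $\nu_w$ live in different tori.
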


Let $\g \in \breve G^{\rs}$ and $C$ be the straight conjugacy class of $\tW$ associated with $\{\g\}$. Let $w, w'$ be minimal length elements of $C$. We set $Y_{\g}=Y_w(\g)$ for some minimal length element $w$ of $C$. By \cite[\S 5.3]{He-ALV}, the isomorphism class of $Y_{\g}$ is independent of the choice of such a minimal length representative and $Y_\g \cong Y^{\bG}_w(\g) \cong Y^{\bM_{\nu_w}}_w(\g)$ is the affine Springer fiber of $\g$ in $\breve M_{\nu_w}$. We call it the {\it affine Springer fiber associated with} $\{\g\}$.

\subsection{Dimension formula}\label{sec:affineS} 

In \cite[Theorem 5.6]{He-ALV}, we established the following comparison result on the dimension formula of affine Lusztig varieties and affine Deligne-Lusztig varieties. 

\begin{theorem}\label{thm:le}
Suppose that $\bG'$ is associated with $\bG$. Let $\g$ be a regular semisimple element of $\breve G$ and $[b]=f_{\bG, \bG'}(\{\g\}) \in B_\s(\bG')$. Then, for any $w \in \tW$, we have 
\begin{enumerate}
    \item $Y_w(\g) \neq \emptyset$ if and only if $X_w(b) \neq \emptyset$; 

    \item If $X_w(b) \neq \emptyset$, then $\dim Y_w(\g)=\dim X_w(b)+\dim Y_{\g}.$
\end{enumerate}

\end{theorem}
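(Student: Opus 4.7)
The plan is to establish both (1) and (2) simultaneously by induction on $\ell(w)$, using the Deligne--Lusztig reduction as the engine. For a simple reflection $s \in \tilde{\BS}$, conjugation by $\dot s$ on the Iwahori double coset $\CI \dot w \CI$ produces the standard reduction formulas: on the affine Deligne--Lusztig side, $X_w(b)$ admits a decomposition relating it to $X_{sws}(b)$ and $X_{sw}(b)$ with an explicit dimension shift determined by the length change (this is the reduction method of \cite{He-Ann}); on the affine Lusztig side, the same conjugation---now applied as ordinary conjugation rather than $\s$-twisted conjugation---produces a parallel decomposition of $Y_w(\g)$ in terms of $Y_{sws}(\g)$ and $Y_{sw}(\g)$ with the \emph{identical} dimension shift. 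Hence both the nonemptiness and the quantity $\dim Y_w(\g) - \dim X_w(b)$ are preserved under each reduction step, and we may assume that $w$ is a minimal length element of its $\tW$-conjugacy class.

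At that point, Hodge--Newton descent takes over. Theorems \ref{thm:HN1} and \ref{thm:HN} give $Y^{\bG}_w(\g) \cong Y^{\bM_{\nu_w}}_w(\g)$ and $X^{\bG'}_w(b) \cong X^{\bM'_{\nu_w}}_w(b)$, and since $\bM'_{\nu_w}$ is associated with $\bM_{\nu_w}$, we may replace $\bG$ by the Levi $\bM_{\nu_w}$. Iterating the Deligne--Lusztig reduction and Hodge--Newton descent inside this smaller group, we are left with the base case in which $w$ is a minimal length element in a straight $\tW$-conjugacy class, so $\nu_w$ is central. Here $Y_w(\g) = Y_\g$ by the very definition of the affine Springer fiber associated with $\{\g\}$, while $X_w(b)$ is $0$-dimensional by classical results for basic $\s$-conjugacy classes. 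The target equality $\dim Y_w(\g) = \dim X_w(b) + \dim Y_\g$ therefore reduces to the trivial identity $\dim Y_\g = 0 + \dim Y_\g$, and the nonemptiness of both sides is simultaneously characterized by the condition $f_{\bG,\bG'}(\{\g\}) = [b]$, which holds by hypothesis.

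The main obstacle lies in justifying the parallelism of the two reductions. A priori, ordinary conjugation by $\g$ (governing $Y_w$) and Frobenius-twisted conjugation by $b$ (governing $X_w$) act rather differently on the loop group, and there is no direct geometric map between $Y_w(\g)$ and $X_w(b)$. The substance of the argument, carried out in \cite[\S 5]{He-ALV}, is that both operations nevertheless descend to the same combinatorial data on $\tW$---the reduction paths in the reduction tree of $w$ introduced via the class polynomials of \cite{He-Ann}. Once this parallelism is verified step by step, the induction propagates the matching dimension formula without obstruction, and the constant $\dim Y_\g$ appears as the single extra piece of geometry present on the Lusztig side at the base of the recursion.
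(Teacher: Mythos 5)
Your reduction strategy --- Deligne--Lusztig reduction on length, Hodge--Newton descent to $\bM_{\nu_w}$, and iteration --- matches the one the paper attributes to \cite{He-ALV}, but you misidentify where the iteration terminates, and the misidentification hides the technical core of the argument. After Deligne--Lusztig reduction you reach a minimal length element $w$ of its $\tW$-conjugacy class; after Hodge--Newton descent to $\bM = \bM_{\nu_w}$ the Newton point is central in $\bM$, and by Theorem~\ref{thm:ux} one has $w = u\t$ with $\t$ a length-zero element of $\tW_{\nu_w}$ and $u$ a $\t$-elliptic element of a finite Weyl group $W_K$. If $u \neq 1$ then $w$ is \emph{not} straight, yet no further Deligne--Lusztig reduction applies (since $w$ is already minimal length) and no further Hodge--Newton descent applies (since the Newton point is already central in $\bM$). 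So the actual base case is ``$w$ lies in the Weyl group of an extended standard parahoric,'' not ``$w$ straight,'' and the claims you make at the bottom of the recursion fail there. For instance, in the affine Weyl group of $\SL_2$, the simple reflection $s_0$ is minimal in its conjugacy class with $\nu_{s_0}=0$, but $\ell(s_0) = 1 > 0 = \langle \bar\nu_{s_0}, 2\rho\rangle$, so $s_0$ is not straight; for basic $b$ one has $\dim X_{s_0}(b) = 1$, not $0$, and $Y_{s_0}(\g)$ is not the affine Springer fiber $Y_\g$.

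The step your proposal omits is precisely the heart of the proof in \cite{He-ALV}. For $w = u\t$ in the genuine base case, one fibers $Y_w(\g)$ over the affine Springer fiber $Y_{K,\t}(\g)$ in the partial affine flag variety $\breve G/\CP_K$, observes that the fibers are \emph{classical} Lusztig varieties in the reductive quotient $\bar\CP_K$, decomposes $Y_{K,\t}(\g)$ into its regular and non-regular loci, and invokes the Kazhdan--Lusztig analysis of the regular locus together with a dimension estimate for the classical Lusztig fibers to compute $\dim Y_w(\g)$ and identify it with $\ell(w) - \langle\bar\nu_w, 2\rho\rangle + \dim Y_\g$. This is where the additive constant $\dim Y_\g$ in part (2) of the theorem is actually produced; by assuming the base case is a straight element, your argument only recovers it in the degenerate situation $u = 1$, and the induction has no anchor otherwise.
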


Combining Theorem \ref{thm:le} with our previous knowledge on the affine Deligne-Lusztig varieties, we obtain the explicit description on the nonemptiness pattern and dimension formula of the affine Lusztig varieties in most cases. We first recall the definitions of the defect and virtual dimension. 

For $b \in \breve G'$, the {\it defect} of $b$ is defined as $\operatorname{def}(b)=\rank_F \bG'-\rank_F \bJ_b$, where for a reductive group $\bH$ defined over $F$, $\rank_F$ is the $F$-rank of the group $\bH$. Note that any element in $\tW$ can be written in a unique way as $x t^\l y$, where $x, y \in W_0$, $\l$ is a dominant coweight such that $t^\l y \in {}^{\BS_0} \tW$. We set $\eta(x t^\l y)=y x \in W_0$. For any $w \in \tW$ and $b \in \breve G'$, the {\it virtual dimension} is defined as $$d_w([b])=\frac 12 \big( \ell(w) + \ell(\eta(w)) -\operatorname{def}(b)  \big)-\<\nu_{[b]}, \rho\>.$$ Here $\rho$ is the half sum of positive roots. By~\cite[Theorem 2.30]{He-CDM}, we have \[\tag{a} \dim X_w(b) \le d_w([b]).\] For $\g \in \breve G^{\rs}$, we set $d_w(\{\g\})=d_w([b])$, where $[b]=f_{\bG, \bG'}(\{\g\})$. By Theorem \ref{thm:le}, we have \[\tag{b} \dim Y_w(\g) \le d_w(\{\g\})+\dim Y_\g.\] 






We have the following results on the nonemptiness pattern and dimension formula of affine Deligne-Lusztig varieties and affine Lusztig varieties for almost all cases. 

\begin{theorem}\label{thm:gasf-fl}\cite[Theorem 6.1]{He20}
Suppose that $\<\mu, \a\> \ge 2$ for any simple root $\a$ and $\nu_{[b]}+2 \rho^\vee \le \mu$. Then, for any $x, y \in W_0$, $X_{x t^\mu y}(b) \neq \emptyset$ if and only if $\k(b)=\k(\mu)$ and $\supp(y x)=\BS$. In this case, $$\dim 
X_{x t^\mu y}(b)=d_{x t^\mu y}([b]).$$
\end{theorem}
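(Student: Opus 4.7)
I would split the claim into an easy necessity part, a harder combinatorial nonemptiness/dimension part, and a length-computation. The upper bound $\dim X_{xt^\mu y}(b)\le d_{xt^\mu y}([b])$ is already available as \S\ref{sec:affineS}(a). Also, $\kappa(b)=\kappa(\mu)$ is forced whenever $X_w(b)\neq\emptyset$ since $\kappa$ factors through $\pi_1(\bG')_{\G'_0}=\tW'/W_{\af}$ and is constant on $\sigma$-conjugacy classes. The substantive content is thus to prove (i) that $\supp(yx)=\BS$ is necessary, and (ii) that, under all listed hypotheses, $X_{xt^\mu y}(b)$ is nonempty of dimension exactly $d_{xt^\mu y}([b])$.

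For (i), I would use a Hodge--Newton type argument combined with the partial conjugation method of \cite{He07}. Writing $w=xt^\mu y$ with $t^\mu y\in{}^{\BS}\tW$ one has $\eta(w)=yx$. If $\supp(yx)=K\subsetneq \BS$, then the partial conjugation method produces $w'$ in the $\tW$-conjugacy class of $w$ lying in $W_K\cdot t^{\mu}\cdot\Omega$, hence in a proper standard parahoric. Hodge--Newton decomposition (Theorem \ref{thm:HN}) then pushes $X_{w'}(b)$ down to the Levi $\bM'_K$; combined with $\nu_{[b]}+2\rho^\vee\le \mu$, this forces an incompatibility between the Newton datum of $b$ and what the Levi can support, giving the contradiction.

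For (ii), the main tool will be the class-polynomial description of $\dim X_w(b)$, which expresses this dimension as a maximum over minimal length elements in straight conjugacy classes reachable from $w$ by a Deligne--Lusztig reduction tree. I would produce, by induction, an explicit reduction path: starting from $xt^\mu y$, use the assumption $\langle\mu,\alpha\rangle\ge 2$ to perform a sequence of cyclic shifts and elementary Deligne--Lusztig moves that strictly decrease length exactly when the alcove walk of $w$ crosses an affine root hyperplane on the wrong side, and terminate at a minimal length representative of the straight conjugacy class $C$ with $f_\s([b])\in\tW\sslash\tW$ equal to $C$. Summing the length contributions and comparing with the formula for $d_w([b])=\tfrac12(\ell(w)+\ell(\eta(w))-\operatorname{def}(b))-\langle\nu_{[b]},\rho\rangle$ should yield the equality. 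The Kottwitz condition guarantees the endpoint sits in the correct component, and the superregularity $\langle\mu,\alpha\rangle\ge 2$ together with $\nu_{[b]}+2\rho^\vee\le\mu$ provides sufficient room in the dominant chamber for every intermediate element to remain in the regime where a length-decreasing reduction is available.

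The hard part will be the combinatorial bookkeeping in (ii): one must show that the reduction path can always be chosen so that its total contribution to the class polynomial matches $d_w([b])$ on the nose, rather than falling short. Concretely, this amounts to verifying that at each step, either a cyclic shift preserves length or an $s$-conjugation drops length by one in a way that records exactly $\tfrac12(\ell(\eta(w))-\operatorname{def}(b))+\text{(contribution of }\nu_{[b]}\text{)}$ worth of descent. The superregularity hypothesis is what makes this possible: it prevents the simple affine roots from ever obstructing the descent, so the process never gets stuck above the virtual dimension. Once this matching is in place, the equality $\dim X_w(b)=d_w([b])$ follows, and the nonemptiness is automatic from the existence of the terminal straight element.
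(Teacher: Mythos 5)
The paper does not prove this statement: it is a direct quotation from \cite[Theorem 6.1]{He20}, so there is no in-text argument to compare your sketch against. Judged on its own terms, the proposal correctly isolates the inequality $\dim X_w(b)\le d_w([b])$ and the role of the Kottwitz invariant, but it contains a concrete error and a serious unverified step.

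The error is in the necessity half of (i). When $\supp(yx)=K\subsetneq\BS$, partial conjugation replaces $w$ by the $W_0$-conjugate $yx\,t^\mu$ with $yx\in W_K$, but this element does \emph{not} lie in a proper standard parahoric: for $\mu\ne 0$ the translation $t^\mu$ lies in no parahoric subgroup at all, so $W_K t^\mu\Omega$ is not contained in any $\CP_J$. What you actually have is that $yx\,t^\mu$ is a $P$-alcove element for the parabolic attached to $K$, and the emptiness criterion you need is the $P$-alcove Hodge--Newton decomposition of G\"ortz--Haines--Kottwitz--Reuman, \emph{not} Theorem \ref{thm:HN}, which is stated only for $w$ of minimal length in its conjugacy class --- and $yx\,t^\mu$ is far from minimal length when $\mu$ is superregular. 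With the correct version one does conclude that $X_w(b)\neq\emptyset$ forces $[b]$ to meet $M_K$, which together with $\nu_{[b]}+2\rho^\vee\le\mu$ and superregularity yields the contradiction, but as written your sketch cites the wrong decomposition theorem with the wrong scope.

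The unverified step is the whole of (ii). You propose to build, for each $[b]$, a reduction path whose contribution realizes $d_w([b])$ exactly, with superregularity ``preventing the descent from ever getting stuck''. That is precisely the claim to be proved, and the sketch converts it into a slogan rather than an argument. The actual proof in \cite{He20} does not proceed path-by-path for each $b$. It introduces the notion of a \emph{cordial element}, verifies cordiality of $w$ by treating only the maximal $[b]\in B(\bG')$ with $X_w(b)\neq\emptyset$, and then uses a monotonicity statement for $d_w(\cdot)-\dim X_w(\cdot)$ along the Newton partial order to propagate both nonemptiness and the dimension equality down to every $[b]$ with the correct Kottwitz invariant. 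That cordiality-plus-monotonicity mechanism is what makes the bookkeeping work uniformly in $b$; a direct reduction-path construction for arbitrary $b$ would have to reproduce this control by hand, and your proposal gives no indication of how.
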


\begin{theorem}\label{thm:gasf-fl'}\cite[Theorem 6.9]{He-ALV}
Let $\g \in \breve G^{\rs}$. Suppose that $\<\mu, \a\> \ge 2$ for any simple root $\a$ and $\nu_{\{\g\}}+2 \rho^\vee \le \mu$. Then, for any $x, y \in W_0$, $Y_{x t^\mu y}(\g) \neq \emptyset$ if and only if $\k(\g)=\k(\mu)$ and $\supp(y x)=\BS$. In this case, $$\dim 
Y_{x t^\mu y}(\g)=d_{x t^\mu y}(\{\g\})+\dim Y_\g.$$
\end{theorem}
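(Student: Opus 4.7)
The strategy is to deduce this theorem directly from the analogous result for affine Deligne--Lusztig varieties (Theorem \ref{thm:gasf-fl}) by applying the comparison theorem (Theorem \ref{thm:le}). Since neither Theorem \ref{thm:le} nor Theorem \ref{thm:gasf-fl} requires any restriction on $w$ beyond what is already built into the hypotheses on $\mu$ and $\nu_{\{\gamma\}}$, this should be a short translation argument.

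Let $\bG'$ be a connected reductive group over $F' = \mathbb{F}_q(\!(\e)\!)$ associated with $\bG$, and set $[b] = f_{\bG, \bG'}(\{\gamma\}) \in B_\sigma(\bG')$. First I would verify that the Newton-type invariants match: under the identification $\tW \cong \tW'$ and the commutative diagram of \S\ref{sec:ass}, both $\{\gamma\}$ and $[b]$ determine the same straight conjugacy class in $\tW \sslash \tW = \tW' \sslash \tW'$, so $\nu_{[b]} = \nu_{\{\gamma\}}$ and $\kappa([b])$ corresponds to $\kappa(\{\gamma\})$. Consequently the hypothesis $\nu_{\{\gamma\}} + 2\rho^\vee \le \mu$ translates to $\nu_{[b]} + 2\rho^\vee \le \mu$, and the condition $\kappa(\gamma) = \kappa(\mu)$ becomes $\kappa(b) = \kappa(\mu)$.

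Next, applying Theorem \ref{thm:le}(1) yields $Y_{xt^\mu y}(\gamma) \neq \emptyset$ iff $X_{xt^\mu y}(b) \neq \emptyset$, and by Theorem \ref{thm:gasf-fl} the right-hand side holds iff $\kappa(b) = \kappa(\mu)$ and $\supp(yx) = \BS$. This produces the desired nonemptiness criterion. For the dimension formula, Theorem \ref{thm:le}(2) combined with the explicit dimension from Theorem \ref{thm:gasf-fl} gives
\[
\dim Y_{xt^\mu y}(\gamma) \;=\; \dim X_{xt^\mu y}(b) + \dim Y_\gamma \;=\; d_{xt^\mu y}([b]) + \dim Y_\gamma,
\]
and by the definition $d_{xt^\mu y}(\{\gamma\}) = d_{xt^\mu y}([b])$ this is exactly the claimed formula.

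The hard part is not the deduction itself, which is formal, but rather the fact that this reduction hinges entirely on the nontrivial comparison theorem (Theorem \ref{thm:le}), whose own proof requires the Deligne--Lusztig reduction, Hodge--Newton decomposition (Theorems \ref{thm:HN} and \ref{thm:HN1}), and the careful analysis at the base case where $w$ lies in a spherical parahoric Weyl group. Assuming these earlier results, the only bookkeeping I would double-check is the compatibility of $\operatorname{def}$ and $\eta(w)$ with the passage $\bG \rightsquigarrow \bG'$, so that the virtual dimension $d_{xt^\mu y}$ has the same value on $\{\gamma\}$ and on $[b]$; this follows once one notes that the definition of $d_w$ depends only on combinatorial data in $\tW$ and on $(\kappa, \nu)$, all of which are preserved under association.
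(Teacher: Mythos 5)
Your proposal is correct and takes exactly the route the paper itself indicates: the statement is a direct corollary of the comparison theorem (Theorem~\ref{thm:le}) applied to the ADLV result (Theorem~\ref{thm:gasf-fl}), with the matching of invariants $\nu_{\{\gamma\}} = \nu_{[b]}$, $\kappa(\gamma) \leftrightarrow \kappa(b)$ coming from the commutative diagram of \S\ref{sec:ass} and the definitional identity $d_w(\{\gamma\}) = d_w([b])$ from \S\ref{sec:affineS}. The paper itself only cites this as \cite[Theorem 6.9]{He-ALV} after remarking that it is obtained by ``combining Theorem~\ref{thm:le} with our previous knowledge on the affine Deligne--Lusztig varieties,'' which is precisely your argument.
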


\section{Enumeration of irreducible components}

\subsection{Minimal length elements}
For any conjugacy class $C$ in $\tW$, we denote by $C_{\min}$ the set of minimal length elements in $C$. For $w, w' \in \tW$ and $s \in \tS$, we write $w \xrightarrow{s} w'$ if $w'=s w s$ and $\ell(w') \le \ell(w)$. We write $w \to w'$ if there is a sequence $w=w_0, w_1, \dots, w_n=w'$ of elements in $\tW$ such that for any $k$, $w_{k-1} \xrightarrow{s_k} w_k$ for some $s_k \in \tS$. We write $w \approx w'$ if $w \to w'$ and $w' \to w$. It is easy to see that $w \approx w'$ if $w \to w'$ and $\ell(w)=\ell(w')$. We have the following results. 

\begin{theorem}\label{thm:min}\cite[Theorem 2.10]{HN14}
	Let $C$ be a conjugacy class of $\tW$ and $w \in C$. Then there exists $w' \in C_{\min}$ such that $w \to w'$.
\end{theorem}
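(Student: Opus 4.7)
The plan is to prove Theorem~\ref{thm:min} by strong induction on $\ell(w)$. The base case $\ell(w)=0$ is immediate since then $w\in\Omega$ and every conjugate of $w$ has length $0$. For the inductive step, assume $w\notin C_{\min}$; it suffices to exhibit a sequence $w=w_0\to w_1\to\cdots\to w_r$ of cyclic shifts (each $w_{i+1}=s_iw_is_i$ for some $s_i\in\tS$ with $\ell(w_{i+1})\le\ell(w_i)$) such that $\ell(w_r)<\ell(w)$; the inductive hypothesis applied to $w_r$ then yields the desired $w'\in C_{\min}$ with $w_r\to w'$, and hence $w\to w'$.

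To construct such a sequence I would exploit the semidirect product decomposition $\tW=W_{\af}\rtimes\Omega$. Write $w=x\tau$ with $x\in W_{\af}$ and $\tau\in\Omega$; because $\tau$ normalizes $\tS$, conjugation by $\tau$ defines an automorphism $\s_\tau$ of the affine Coxeter system $(W_{\af},\tS)$, and for $s\in\tS$ one computes $sws=\bigl(sx\s_\tau(s)\bigr)\tau$, so $\ell(sws)-\ell(w)=\ell(sx\s_\tau(s))-\ell(x)$. The problem thus becomes a statement about $\s_\tau$-twisted conjugacy in the Coxeter group $W_{\af}$: given $x\in W_{\af}$ that is not of minimum length in its $\s_\tau$-twisted class, find $\s_\tau$-twisted cyclic shifts leading to a strictly shorter element.

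The key combinatorial input is: if $x\in W_{\af}$ is not of minimum length in its $\s_\tau$-twisted conjugacy class, then there exist length-preserving $\s_\tau$-twisted cyclic shifts $x\approx x_1\approx\cdots\approx x_m$ and $s\in\tS$ with $\ell(sx_m\s_\tau(s))<\ell(x_m)$. I would establish this by a case split on the support of $x$. When $\supp(x)\subsetneq\tS$ is spherical, $x$ lies in a finite Coxeter subgroup and the classical Geck--Pfeiffer theorem for finite Coxeter groups, in its twisted form, applies. The harder case is when the element is essentially ``affine'': here I would invoke the Hodge--Newton philosophy behind Theorem~\ref{thm:HN} and Theorem~\ref{thm:HN1} to pass from $w$ to a $\tW$-conjugate lying in the Iwahori--Weyl group $\tW_{\nu_w}$ of the standard Levi $\bM_{\nu_w}$, where the translation/elliptic part of $w$ becomes central in the relevant factor and the problem reduces either to the finite case or to an instance with strictly smaller affine complexity.

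The main obstacle will be this key combinatorial lemma: the naive strategy of choosing a length-decreasing $s\in\tS$ can fail, since there are non-minimal-length elements whose immediate simple conjugates all have length $\ge\ell(w)$. Overcoming this requires orchestrating the length-preserving cyclic shifts to rearrange reduced expressions until a length-decreasing simple reflection becomes available, and proving termination. The organizing invariant is the dominant Newton point $\bar\nu_w$, which is preserved under all cyclic shifts and controls both the Levi reduction and the comparison between $\ell(w)$ and $\langle\bar\nu_w,2\rho\rangle$; the minimum length elements are precisely those for which this difference attains its minimum in the conjugacy class, giving a natural double induction on $\ell(w)$ and on $\ell(w)-\langle\bar\nu_w,2\rho\rangle$ that drives the argument to completion.
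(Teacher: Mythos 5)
The paper itself does not prove Theorem~\ref{thm:min}; it imports it from \cite[Theorem~2.10]{HN14} as a black box. So there is no in-text proof to compare against. Judged on its own terms, however, your outline has real gaps at exactly the point where the hard work would have to happen.

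Your reduction via the decomposition $\tW = W_{\af}\rtimes\Omega$ to a statement about $\s_\tau$-twisted cyclic shifts in $W_{\af}$ is correct and standard, and the spherical-support branch that invokes the twisted Geck--Pfeiffer theorem for finite Coxeter groups is a genuine ingredient. The problem is the other branch. Your ``key combinatorial input'' is essentially a restatement of the theorem, and for the non-spherical case you justify it by invoking ``the Hodge--Newton philosophy behind Theorems~\ref{thm:HN} and \ref{thm:HN1}.'' Those theorems are statements about affine Deligne--Lusztig varieties and affine Lusztig varieties, not a combinatorial reduction; the analogous combinatorial fact --- that an element with Newton point $\nu_w\neq 0$ can be reached from $w$ by a length-non-increasing chain of \emph{simple} conjugations and landed in $\tW_{\nu_w}$ --- is itself one of the nontrivial lemmas one must prove, and is precisely where the bulk of the argument in \cite{HN14} lives. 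Even granting that one can conjugate $w$ into $\tW_{\nu_w}$, the length function of $\tW_{\nu_w}$ is not the restriction of that of $\tW$ (the paper says this explicitly in \S\ref{sec:levi}), so ``reducing to the Levi'' does not hand you an instance of the same problem on a smaller group without additional bookkeeping relating the two length functions.

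Finally, the proposed terminating invariant $\ell(w)-\langle\bar\nu_w,2\rho\rangle$ does not help: $\bar\nu_w$ is constant on the conjugacy class $C$, so this quantity carries no information beyond $\ell(w)$ itself within the induction you set up. You would need either an induction on the semisimple rank of the ambient group (with a careful comparison of length functions under Levi restriction) or a concrete structural lemma describing how translation parts interact with simple conjugation --- the latter is what \cite{HN14} actually supplies, and it is missing here.
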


We have the following results on nice representatives of minimal length elements.

\begin{theorem}\label{thm:ux}\cite[Propositions 2.4 and 2.7]{HN14}
Let $w \in \tW$. Then there exists a straight element $x \in \tW$, $K \subset \tilde \BS$ with $W_K$ finite, $x \in {}^K \tW^K$, and $\Ad(x)(K)=K$ and $u \in W_K$ such that $u x$ is a minimal length element in the conjugacy class of $w$ and $w \to u x$. 
\end{theorem}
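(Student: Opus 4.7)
The plan is to first apply Theorem \ref{thm:min} to produce $w' \in C_{\min}$ with $w \to w'$, where $C$ is the conjugacy class of $w$. It then suffices to find, inside the $\approx$-equivalence class of $w'$, a representative of the desired form $ux$; concatenating chains will then give $w \to ux$, and $ux \in C_{\min}$ will be automatic from $\ell(ux)=\ell(w')$.

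To produce the decomposition I would use the Newton vector to isolate a Levi structure. Set $\nu=\nu_{w'}$ and let $\bM_\nu$ be the Levi attached to $\nu$ as in \S\ref{sec:HN}. The first key claim is that, after replacing $w'$ by a length-equivalent conjugate if necessary, one can arrange $w' \in \tW_\nu$, the Iwahori--Weyl group of $\bM_\nu$. This is a Hodge--Newton-type reduction carried out at the combinatorial level: one exhibits cyclic conjugations by simple reflections $s \in \tilde\BS$ whose associated affine root is not orthogonal to $\nu$, and shows that such conjugations do not strictly increase length, using that $(w')^n$ is a pure translation by $n\nu$ for a suitable $n$.

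Next, let $K \subset \tilde\BS$ be the set of simple reflections whose associated root is orthogonal to $\nu$; equivalently, these are the simple reflections lying in the finite Weyl group of $\bM_\nu$. Then $W_K$ is finite and provides the ``finite Weyl part'' of $\tW_\nu$. Since $\nu$ is central in $\tW_\nu$, every element of $\tW_\nu$ decomposes canonically as a product of an element of this finite Weyl group and a translation piece aligned with $\nu$; applying this to $w'$ produces $w'=ux$ with $u \in W_K$ and $x$ realising the translation part. Straightness of $x$ follows because a positive power of $x$ is a translation by a multiple of $\nu$, and translations are straight. Minimality of $\ell(w')$ forces the length-additivity $\ell(w')=\ell(u)+\ell(x)$ together with $x \in {}^K\tW^K$, since any failure would allow a further length-reducing conjugation by some $s \in K$. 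Finally, $\Ad(x)(K)=K$ because the translation vector of $x$ is proportional to $\nu$, so $\Ad(x)$ permutes precisely the walls orthogonal to $\nu$, which are exactly the walls defining the reflections in $K$.

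The main obstacle is the combinatorial Hodge--Newton step: showing that a minimal length element of $C$ can be placed inside $\tW_\nu$ by a sequence of length-nonincreasing conjugations. One expects to prove this by induction on a suitable measure of ``distance'' between $w'$ and $\tW_\nu$: at each step one must identify a simple reflection $s \in \tilde\BS$ whose associated affine root pairs nontrivially with $\nu$ and verify that $sw's$ is either strictly closer to $\tW_\nu$ or has strictly smaller length (which would contradict minimality and force termination). Once $w'$ is inside $\tW_\nu$, the remaining extraction of the decomposition $ux$ together with the properties $x\in{}^K\tW^K$, $\Ad(x)(K)=K$, and straightness of $x$ is essentially length-function bookkeeping in the extended affine Weyl group $\tW_\nu$ with its finite parabolic subgroup $W_K$.
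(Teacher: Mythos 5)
The paper gives no proof here: it cites this as \cite[Propositions 2.4 and 2.7]{HN14}. Your sketch attempts to reconstruct a proof via the Newton point and a Hodge--Newton reduction, but the construction of the pair $(K,x)$ has several genuine gaps. First, the set $K=\{s\in\tilde\BS:\alpha_s\perp\nu\}$ need not be spherical. If $\nu=\nu_{w'}=0$ (for instance, $w'$ in a torsion conjugacy class, such as any class meeting $W_0$), every simple reflection has gradient orthogonal to $\nu$, so $K=\tilde\BS$ and $W_K=W_{\af}$ is infinite. The same failure occurs whenever $\nu$ projects to zero on some almost-simple factor, since the whole affine Weyl group of that factor then lands in $W_K$. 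The theorem guarantees a spherical $K$, but it is not the stabilizer of $\nu$ in $\tilde\BS$; for torsion classes one must choose a proper spherical subset (e.g.\ $\BS_0$) that is not detected by $\nu$ at all.

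Second, writing $w'=u\,t^{\lambda'}$ inside $\tW_{\nu}=W_{0,\nu}\ltimes X_*(T)_{\Gamma_0}$ and taking $x=t^{\lambda'}$ does not give the required properties. The translation vector $\lambda'$ is \emph{not} proportional to $\nu$; by definition $\nu$ is (the $W$-average of) $\lambda'$ under the cyclic group generated by $u$, not $\lambda'$ itself. Hence the argument in your final step --- ``$\Ad(x)(K)=K$ because the translation vector of $x$ is proportional to $\nu$'' --- breaks down: for a generic coweight $\lambda'$, $\Ad(t^{\lambda'})$ does not preserve any spherical subset of $\tilde\BS$, and $t^{\lambda'}\in{}^K\tW^K$ also fails. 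More fundamentally, you are conflating the Coxeter structure of $\tW_{\nu}$ with that of $\tW$: the paper explicitly warns (\S\ref{sec:levi}) that $\tilde\BS_{\nu_w}$ is \emph{not} a subset of $\tilde\BS$ and the length functions differ, so a Weyl-factorization performed inside $\tW_\nu$ does not transfer to a factorization $w'=ux$ with $u\in W_K$, $K\subset\tilde\BS$. The actual proof in \cite{HN14} works directly in $\tW$ via the $P$-alcove formalism of G\"ortz--Haines--Kottwitz--Reuman: one shows that some minimal-length representative lies in a $P$-alcove, and the fundamental $P$-alcove datum $(K,x)$ --- with $x$ a straight element normalizing $\CP_K$, not in general a pure translation --- is what produces the decomposition $w'=ux$ with all the required properties. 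That normalization is exactly what the Newton-point approach does not provide.
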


We also point out that in this case, $u$ is a minimal length element in the $\Ad(x)$-conjugacy class of $W_K$.


\subsection{Reduction trees}\label{sec:tree}
The reduction trees for $w \in \tW$ was introduced in \cite{HNY}. The reduction tree encodes the Deligne-Lusztig reduction for the affine Deligne-Lusztig varieties associated to $w$ (and to all $b \in \breve G$). The vertices of the graphs are the elements of $\tW$ and the (oriented) edges are of the form $x \lup y$. The reduction trees are constructed inductively. 

For $x, y \in \tW$, we write $x \lup y$ if there exists $x' \in \tW$ and $i \in \tilde \BS$ with $x \approx_\s x'$, $s_i x' \s(s_i)<x'$ and $y \in \{s_i x', s_i x' \s(s_i)\}$. 
The reduction tree of a minimal length in its conjugacy class consists of the element itself and no edges. Suppose that $w$ is not of minimal length in its $\s$-conjugacy class of $\tW$ and that a reduction tree is given for any $z \in \tW$ with $\ell(z)<\ell(w)$. By Theorem \ref{thm:min}, there exists $w' \in \tW$ and $i \in \tilde \BS$ with $w \approx_\s w'$ and $s_i w' \s(s_i)<w'$. The reduction tree of $w$ is the graph containing the given reduction tree for $s_i w'$ and the reduction tree for $s_i w' \s(s_i)$, and the edges $w \lup s_i w'$ and $w \lup s_i w' \s(s_i)$. 

An {\it end point} of a reduction tree $\CT$ is a vertex $x$ of $\CT$ such that there is no edge of the form $x \lup x'$ in $\CT$. A {\it reduction path} in $\CT$ is a path $\underline p: w \lup w_1 \lup \cdots \lup w_n$, where $w_n$ is an end point of $\CT$. We write ${\rm end}(\underline p)=w_n$. The {\it length} $\ell(\underline p)$ of the reduction path $\underline p$ is the number of edges in $\underline p$. 

We denote by $\RaP(\CT)$ the set of reduction paths in $\CT$. For any $[b] \in B_\s(\bG')$, we denote by $\RaP_{[b]}(\CT)$ the set of reduction paths $\underline p$ in $\CT$ with $f_{\tW}({\rm end}(\underline p))=f_{\s}([b])$. Similarly, for any $\{\g\} \in B(\bG)$, we denote by $\RaP_{\{\g\}}(\CT)$ the set of reduction paths $\underline p$ in $\CT$ with $f_{\tW}({\rm end}(\underline p))=f(\{\g\})$.

The following connection between the reduction trees and the affine Deligne-Lusztig varieties is given in \cite[Proposition 3.9]{HNY}. 

\begin{proposition}\label{prop:dec}
Let $w \in \tW$ and $\CT$ be a reduction tree of $w$. Then for any $b \in \breve G'$, there exists a decomposition $$X_w(b)=\bigsqcup_{\underline p \in \RaP_{[b]}(\CT)} X_{\underline p} (b),$$ where $X_{\underline p}$ is a locally closed subscheme of $X_w(b)$, and is $J_b$-equivariant universally homeomorphic to a fibration over  $X_{\text{end}(\underline p)}(b)$ with irreducible fibers of dimension $\ell(\underline p)$.
\end{proposition}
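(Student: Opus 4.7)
The plan is to proceed by induction on $\ell(w)$, matching the recursive construction of the reduction tree $\CT$ with the elementary Deligne--Lusztig reduction for affine Deligne--Lusztig varieties. The base case is when $w$ is of minimal length in its $\s$-conjugacy class: then $\CT$ has no edges, the only reduction path $\underline p$ is trivial with ${\rm end}(\underline p)=w$ and $\ell(\underline p)=0$, and the decomposition $X_w(b)=X_{\underline p}(b)$ with the identity map as the fibration is tautological.

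For the inductive step, suppose $w$ is not of minimal length. By the construction of $\CT$, there exist $w'\in \tW$ and $i\in \tilde\BS$ with $w\approx_\s w'$ and $s_i w' \s(s_i)<w'$. The relation $w\approx_\s w'$ is witnessed by a chain of length-preserving $\s$-twisted conjugations by simple reflections, each of which induces a $J_b$-equivariant isomorphism on the corresponding ADLVs; composing these yields $X_w(b)\cong X_{w'}(b)$. The standard Deligne--Lusztig reduction (as in \cite{He-Ann}) then provides a $J_b$-equivariant locally closed decomposition
\[
X_{w'}(b)=X_+\sqcup X_-,
\]
together with universal homeomorphisms presenting $X_+$ and $X_-$ as Zariski-locally trivial fibrations over $X_{s_i w'\s(s_i)}(b)$ and $X_{s_i w'}(b)$ respectively (one of type $\mathbb A^1$, the other of type $\mathbb G_m$), both with irreducible fibers of dimension $1$.

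Now $\CT$ contains as subtrees reduction trees $\CT_+$ of $s_i w'\s(s_i)$ and $\CT_-$ of $s_i w'$, and prepending the appropriate edge $w\lup y$ induces a bijection $\RaP_{[b]}(\CT)\cong \RaP_{[b]}(\CT_+)\sqcup \RaP_{[b]}(\CT_-)$ under which, for $\underline p=(w\lup y\lup \underline q)$, one has ${\rm end}(\underline p)={\rm end}(\underline q)$ and $\ell(\underline p)=\ell(\underline q)+1$. Applying the induction hypothesis to $X_{s_i w'\s(s_i)}(b)$ and $X_{s_i w'}(b)$, pulling the resulting pieces back along the two fibrations above, and transporting along $X_w(b)\cong X_{w'}(b)$, yields the claimed decomposition of $X_w(b)$: each $X_{\underline p}(b)$ is a $J_b$-equivariant universal homeomorphism onto a fibration over $X_{{\rm end}(\underline p)}(b)$ whose irreducible fibers have dimension $\ell(\underline q)+1=\ell(\underline p)$.

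The main subtlety will be bookkeeping: keeping the $J_b$-equivariances and the distinction between genuine fibrations and universal homeomorphisms compatible across iterated reductions, and (in mixed characteristic) verifying that everything descends to the perfect ind-scheme setting. Since each elementary step is built from intersections of Iwahori double cosets with quotients by unipotent subgroups normalized by $J_b$, equivariance is preserved automatically; the real work lies in carefully tracking the pieces through the recursion so that the assembled $X_{\underline p}(b)$ depends only on $\underline p$ (and not on the auxiliary expansion of the relation $\approx_\s$).
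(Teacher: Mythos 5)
Your proof is correct and takes essentially the standard approach: induction on $\ell(w)$ that mirrors the recursive construction of the reduction tree, using the length-preserving $\s$-conjugations to pass from $w$ to $w'$ and the Deligne--Lusztig reduction at each branching. The paper does not reprove this result but cites it as \cite[Proposition~3.9]{HNY}, and what you wrote is, modulo the bookkeeping details you flag at the end, the proof given there.
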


One may prove in the same way that 

\begin{proposition}\label{prop:dec2}
Let $w \in \tW$ and $\CT$ be a reduction tree of $w$. Then for any $\g \in \breve G$, there exists a decomposition $$Y_w(\g)=\bigsqcup_{\underline p \in \RaP_{\{\g\}}(\CT)} Y_{\underline p}(\g),$$ where $Y_{\underline p}$ is a locally closed subscheme of $Y_w(\g)$, and is $Z_{\breve G}(\g)$-equivariant isomorphic to a fibration over  $Y_{\text{end}(\underline p)}(\g)$ with irreducible fibers of dimension $\ell(\underline p)$.
\end{proposition}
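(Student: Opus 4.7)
The plan is to mirror the proof of Proposition \ref{prop:dec} (that is, \cite[Proposition 3.9]{HNY}) step-by-step, replacing $\s$-twisted conjugation by $b$ with ordinary conjugation by $\g$ throughout. The induction runs on $\ell(w)$ along the branching structure of the reduction tree $\CT$. In the base case, $w$ is of minimal length in its conjugacy class, so $\CT$ consists of the single vertex $w$; the unique length-zero path $\underline p = (w)$ belongs to $\RaP_{\{\g\}}(\CT)$ precisely when $f_{\tW}(w) = f(\{\g\})$. Theorem \ref{thm:HN1}, combined with Theorem \ref{thm:ux} to reduce to straight representatives, shows that $Y_w(\g) \neq \emptyset$ exactly in this case, so the tautological decomposition $Y_w(\g) = Y_{\underline p}(\g)$ satisfies the conclusion.

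For the inductive step, I would establish two geometric inputs that parallel the standard Deligne--Lusztig reduction used in \cite{He-Ann} and \cite{HNY}. First, a cyclic-shift invariance: whenever $\ell(s w \s(s)) = \ell(w)$ for $s \in \tS$, the assignment $g\CI \mapsto g\dot s\CI$ yields a $Z_{\breve G}(\g)$-equivariant isomorphism $Y_w(\g) \cong Y_{s w \s(s)}(\g)$; iterating along the chain $w \approx_\s w'$ produced by the tree construction gives $Y_w(\g) \cong Y_{w'}(\g)$. Second, a length-reducing decomposition: whenever $\ell(s w \s(s)) < \ell(w)$, the $\BP^1$-fibration $\Fl \to \breve G / \CP_{\{s\}}$ yields a $Z_{\breve G}(\g)$-stable partition $Y_w(\g) = Y_w(\g)^{\mathrm{cl}} \sqcup Y_w(\g)^{\mathrm{op}}$, in which the closed piece fibers $Z_{\breve G}(\g)$-equivariantly over $Y_{s w}(\g)$ with irreducible $1$-dimensional fibers (an $\BA^1$-bundle) and the open piece fibers similarly over $Y_{s w \s(s)}(\g)$ with irreducible $1$-dimensional fibers (a $\mathbb{G}_m$-bundle). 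Combining these at $w'$ and invoking the inductive hypothesis on the two subtrees rooted at $s_i w'$ and $s_i w' \s(s_i)$, disjoint union together with composition of fibrations (which preserves irreducibility and adds fiber dimensions) assembles precisely the decomposition indexed by $\RaP_{\{\g\}}(\CT)$ with $\ell(\underline p)$-dimensional irreducible fibers.

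The main obstacle is establishing the length-reducing decomposition in the conjugation setting. The geometric argument should be essentially identical to the $\s$-conjugation case, because the $\BP^1$-fiber analysis depends only on the relative position of $g\CI$ and $\g g \CI$ in the parahoric quotient $\breve G/\CP_{\{s\}}$ and not on the nature of the twist. The care needed lies in tracking $Z_{\breve G}(\g)$-equivariance, which is in fact \emph{cleaner} than the corresponding $J_b$-equivariance in the affine Deligne--Lusztig case since conjugation by $\g$ is a genuine algebraic action of $Z_{\breve G}(\g)$ on $\breve G$, and in verifying the irreducibility of the two fibers on the nose. Once both (i) and (ii) are in hand, the inductive assembly of the decomposition is formal.
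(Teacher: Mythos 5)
The paper does not write out a proof of Proposition~\ref{prop:dec2}: it simply remarks ``one may prove in the same way'' as Proposition~\ref{prop:dec}, i.e.\ by transposing the argument of \cite[Proposition~3.9]{HNY} from $\s$-conjugation by $b$ to ordinary conjugation by $\g$. Your proposal is exactly that same route — induction on the reduction tree via cyclic shift and the Deligne--Lusztig type length-reducing $\BP^1$-fibration — with the details correctly filled in (and you are right that $Z_{\breve G}(\g)$-equivariance is simpler to track here than $J_b$-equivariance, since conjugation by $\g$ is an untwisted action); only note that in this residually split setting $\s$ acts trivially on $\tW$, so the twists $\s(s)$ appearing in your sketch are just $s$.
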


\subsection{Enumeration of irreducible components of affine Deligne-Lusztig varieties}

We first give a description of the irreducible components of affine Deligne-Lusztig varieties in terms of reduction paths. 

\begin{proposition}\label{prop:irr-adlv}
    Let $w \in \tW$ and $\CT$ be a reduction tree of $\CT$. Then for any $b \in \breve G'$ with $X_w(b) \neq \emptyset$, we have $$\sharp\bigl(J_b \backslash \Sigma^{{\rm top}}(X_w(b))\bigr)=\sharp\{\underline p \in \RaP_{[b]}(\CT); \ell(\underline p)+\ell(\text{end}(\underline p))-\<\bar \nu_{\text{end}(\underline p)}, 2 \rho\>=\dim X_w(b)\}. $$
\end{proposition}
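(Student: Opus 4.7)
The plan is to combine the Deligne--Lusztig stratification of Proposition \ref{prop:dec} with the established structure of $X_{w_n}(b)$ at a minimal length element $w_n$ of its conjugacy class in $\tW$. First, I would invoke Proposition \ref{prop:dec} to obtain $X_w(b) = \bigsqcup_{\underline p \in \RaP_{[b]}(\CT)} X_{\underline p}(b)$, where each stratum is $J_b$-equivariantly universally homeomorphic to a fibration with irreducible fibers of relative dimension $\ell(\underline p)$ over $X_{\text{end}(\underline p)}(b)$. A universal homeomorphism with irreducible fibers induces a $J_b$-equivariant bijection between the irreducible components of $X_{\underline p}(b)$ and those of $X_{\text{end}(\underline p)}(b)$, shifting dimensions by $\ell(\underline p)$, so the problem reduces to the analysis of the endpoint strata.

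Set $w_n = \text{end}(\underline p)$. By construction, $w_n$ is of minimal length in its conjugacy class and, since $\underline p \in \RaP_{[b]}(\CT)$, satisfies $f_{\tW}(w_n) = f_\s([b])$. Applying the Hodge--Newton reduction of Theorem \ref{thm:HN} passes to the Levi $\bM'_{\nu_{w_n}}$, in which $[b]$ becomes basic; then the factorization $w_n = ux$ of Theorem \ref{thm:ux}, combined with the classical description of affine Deligne--Lusztig varieties at minimal length elements (as in \cite{He-Ann}), realises the remaining piece as a $J_b$-translate of a classical Deligne--Lusztig variety $X_u^{\Ad(x)\s}$ inside a finite reductive group. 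From this model one reads off that $X_{w_n}(b)$ is equidimensional of dimension $\ell(u) = \ell(w_n) - \<\bar\nu_{w_n}, 2\rho\>$ (using the identity $\ell(x) = \<\bar\nu_x, 2\rho\>$ for straight $x$), and that $J_b$ acts transitively on the set of its irreducible components, the components-permuting finite torus of the classical Deligne--Lusztig variety being realised inside the $J_b$-stabiliser of a base point.

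Putting the two ingredients together, every stratum $X_{\underline p}(b)$ is equidimensional of dimension $\ell(\underline p) + \ell(\text{end}(\underline p)) - \<\bar\nu_{\text{end}(\underline p)}, 2\rho\>$, with $J_b$ acting transitively on its components. Since the stratification is by pairwise disjoint locally closed subschemes, each top-dimensional irreducible component of $X_w(b)$ is the closure of a top-dimensional irreducible component of a unique stratum, and only strata of maximal dimension $\dim X_w(b)$ contribute; each such stratum yields exactly one $J_b$-orbit of top-dimensional components, and summing over contributing paths produces the claimed identity.

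The main obstacle is the transitivity claim: that $J_b$ acts transitively on $\Sigma^{\text{top}}(X_{w_n}(b))$ when $w_n$ is of minimal length in its conjugacy class. This is essentially classical once one descends to the basic case inside the Levi and identifies the resulting variety with a twisted classical Deligne--Lusztig variety, but it requires careful bookkeeping of $J_b$-equivariance through each reduction and the explicit identification of the components-permuting finite torus inside the relevant $J_b$-stabiliser.
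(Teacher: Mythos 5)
Your argument is essentially the paper's: stratify $X_w(b)$ by reduction paths using Proposition \ref{prop:dec}, pass to the endpoints via the fibration with irreducible fibers of dimension $\ell(\underline p)$, and at each endpoint $w_n = \text{end}(\underline p)$ (which is of minimal length in its conjugacy class) invoke transitivity of $J_b$ on $\Sigma^{\rm top}(X_{w_n}(b))$ together with $\dim X_{w_n}(b) = \ell(w_n) - \langle\bar\nu_{w_n}, 2\rho\rangle$, so that only cordial strata of dimension $\dim X_w(b)$ contribute, each exactly one $J_b$-orbit. The only divergence is that what you flag as the main remaining obstacle — transitivity at minimal-length endpoints — is precisely the content of \cite[Theorem 4.8]{He-Ann}, which the paper simply cites rather than re-deriving through Hodge--Newton reduction and classical Deligne--Lusztig theory as you sketch.
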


\begin{remark}
    It is shown in \cite[Theorem 2.19]{He-CDM} that the number of $J_b$-orbits on $\Sigma^{{\rm top}}(X^{\bG'}_w(b)))$ equals the leading coefficient of the corresponding class polynomial of the associated affine Hecke algebra. One may deduce the above formula from loc.cit. For the convenience of the readers, below we give a proof without referring to the class polynomials. 
\end{remark}

\begin{proof}
    We first consider the case where $w$ is a minimal length element in its conjugacy class in $\tilde W$. In this case, by \cite[Theorem 4.8]{He-Ann}, $\dim X_w(b)=\ell(w)-\<\bar \nu_w, 2 \rho\>$ and $J_b$ acts transitively on the set of irreducible components of $X_w(b)$. Hence $\sharp\bigl(J_b \backslash \Sigma^{{\rm top}}(X_w(b))\bigr)=1$. Also by definition, $\CT$ consists of only one vertex, the element $w$ itself. The statement holds in this case. 

    Now we consider the general case. Let $d=\dim X_w(b)$. By Proposition \ref{prop:dec}, we have \begin{align*} \Sigma^{{\rm top}}(X_w(b)) &=\bigsqcup_{\underline p \in \RaP_{[b]}(\CT); \ell(\underline p)+\dim X_{\text{end}(\underline p)} (b)=d} \Sigma^{{\rm top}}(X_{(\underline p)} (b)) \\ & \cong \bigsqcup_{\underline p \in \RaP_{[b]}(\CT); \ell(\underline p)+\dim X_{\text{end}(\underline p)} (b)=d} \Sigma^{{\rm top}}(X_{\text{end}(\underline p)} (b))
    \end{align*}

    Note that for any $\underline p$, $\text{end}(\underline p)$ is a minimal length element in its conjugacy class in $\tilde W$. Thus $\sharp\bigl(J_b \backslash \Sigma^{{\rm top}}(X_{\text{end}(\underline p)}(b))\bigr)=1$ and $\ell(\underline p)+\dim X_{\text{end}(\underline p)} (b)=\ell(\underline p)+\ell(\text{end}(\underline p))-\<\bar \nu_{\text{end}(\underline p)}, 2 \rho\>$. So $$\sharp\bigl(J_b \backslash \Sigma^{{\rm top}}(X_w(b))\bigr)=\sharp\{\underline p \in \RaP_{[b]}(\CT); \ell(\underline p)+\ell(\text{end}(\underline p))-\<\bar \nu_{\text{end}(\underline p)}, 2 \rho\>=\dim X_w(b)\}. $$ The statement is proved. 
\end{proof}

\subsection{Regular locus}\label{sec:regular}
In this subsection, we study the affine Springer fibers and their regular loci. This is a key ingredient to our description of irreducible components of affine Lusztig varieties. 

We consider the group $\breve G$ together with a group automorphism $\d$ such that $\d(\CI)=\CI$. Let $\breve G_{\ad}$ be the adjoint group of $\breve G$. We assume furthermore that the image of $\d$ in $\Aut(\breve G_{\ad})$ is of finite order. The automorphism $\d$ on $\breve G$ induces group automorphisms on $\tW$ and on $\Omega$. We consider the group extensions $$\breve G^{\ex}=\breve G \rtimes \<\d\>, \tW^{\ex}=\tW \rtimes \<\d\> \text{ and } \Omega^{\ex}=\Omega \rtimes \<\d\>.$$ 



Let $\t \in \Omega^{\ex}$ and $K$ be a $\Ad(\t)$-stable spherical subset $K$ of $\tilde \BS$. Let $\bar \CP_K$ be the reductive quotient of $\CP_K$, i.e., the quotient of $\CP_K$ by its pro-unipotent radical. The Dynkin diagram of $\bar \CP_K$ is $K$. The conjugation action of $\dot \t$ on $\CP_K$ induces a diagram automorphism on $\bar \CP_K$, which we denote by $\bar \t$. 

We define the {\it affine Springer fiber} in the partial affine flag variety $\breve G/\CP_K$ by $$Y_{K, \t}(\g)=\{g \CP_K \in \breve G/\CP_K; g \i \g g \in \CP_K \dot \t\}.$$ By definition, $Y_{K, \t}(\g) \neq \emptyset$ if and only if $\g$ is conjugate to an element in $\CP_K \dot \t$. If we assume furthermore that $\g \in \CP_K \dot \t$, then the condition that $g \i \g g \in \CP_K \dot \t$ is equivalent to the condition that $\g (g \CP_K) \g \i=g \CP_K$. In other words, $Y_{K, \t}(\g)=(\breve G/\CP_K)^\g$ is the subscheme of fixed points of $\g$ on $\breve G/\CP_K$. 

For any element $\g'$ in $\CP_K \dot \t$, we denote by $\bar \g'$ its image in the (not necessarily connected) reductive group $\bar \CP_K \rtimes \<\bar \t\>$. We consider the {\it regular locus} $$Y_{K, \t}^{\reg}(\g)=\{g \CP_K \in Y_{K, \t}(\g); \overline{g \i \g g} \text{ is regular in } \bar \CP_K \rtimes \<\bar \t\>\}.$$ 

Next we recall the notion of elliptic element. For $w \in W_K$, let $\supp(w)$ be the set of simple reflections of that appear in some (or, equivalently, any) reduced expression of $w$. We say that $w$ is a $\t$-elliptic element of $W_K$ if there is no proper $\t$-stable subset of $K$ that contains $\supp(w)$. In other words, $w$ is a $\t$-elliptic element of $W_K$ if and only if $w \notin W_{K'}$ for any $K' \subsetneq K$ with $\Ad(\t)(K')=K'$.  


We have the following result.

\begin{proposition}\label{prop:irr-basic}
Let $\t \in \Omega^{\ex}$ and $K$ be a $\Ad(\t)$-stable spherical subset of $\tilde \BS$. Suppose that $\g \in \CP_K \dot \t$ is a regular semisimple element of $\breve G^{\ex}$. Then for any $\t$-elliptic element $w$ in $W_K$, 
$$\sharp\bigl(Z_{\breve G}(\g) \backslash \Sigma^{{\rm top}}(Y_{w\t}(\g))\bigr)=\sharp\bigl(Z_{\breve G}(\g) \backslash \Sigma^{{\rm top}}(Y^{\reg}_{K, \t}(b))\bigr).$$
\end{proposition}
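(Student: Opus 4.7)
The plan is to study the natural $Z_{\breve G}(\gamma)$-equivariant projection $\pi \colon Y_{w\tau}(\gamma) \to Y_{K,\tau}(\gamma)$ induced by the quotient $\breve G/\CI \twoheadrightarrow \breve G/\CP_K$. The strategy has two halves. First, over the regular locus $Y^{\reg}_{K,\tau}(\gamma)$, I would show that $\pi$ is a fibration with irreducible equidimensional fibers. Second, over the complement of the regular locus, I would show that the total dimension of $\pi^{-1}$ drops strictly. These two facts together imply that the top-dimensional components of $Y_{w\tau}(\gamma)$ are exactly the closures of the $\pi$-preimages of top-dimensional components of $Y^{\reg}_{K,\tau}(\gamma)$; the claimed bijection on $Z_{\breve G}(\gamma)$-orbits then follows from the equivariance of $\pi$.

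For the first half, I would identify each fiber explicitly. For $g\CP_K \in Y_{K,\tau}(\gamma)$, set $\gamma' = g^{-1}\gamma g \in \CP_K\dot\tau$ and let $\bar\gamma'$ denote its image in the reductive group $\bar\CP_K \rtimes \langle\bar\tau\rangle$. Using the identification $\CP_K/\CI \cong \bar\CP_K/\bar\CB$, where $\bar\CB$ is the image of $\CI$ in $\bar\CP_K$, a direct translation of the defining condition yields
\[
\pi^{-1}(g\CP_K) \cong \bar Y_{w\bar\tau}(\bar\gamma') := \bigl\{\bar h\bar\CB \in \bar\CP_K/\bar\CB : \bar h^{-1}\bar\gamma'\bar h \in \bar\CB\,\bar w\bar\tau\,\bar\CB\bigr\},
\]
the classical Lusztig variety in the flag variety of $\bar\CP_K$. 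Over $g\CP_K \in Y^{\reg}_{K,\tau}(\gamma)$, by definition $\bar\gamma'$ is regular in $\bar\CP_K \rtimes \langle\bar\tau\rangle$, and $w$ is $\tau$-elliptic in $W_K$ by hypothesis. The He--Lusztig theorem on classical Lusztig varieties (the input cited as \cite{HL} in the strategy section of the paper) then guarantees that $\bar Y_{w\bar\tau}(\bar\gamma')$ is irreducible of a common dimension $d$. Hence $\pi$ restricted to $\pi^{-1}(Y^{\reg}_{K,\tau}(\gamma))$ is a $Z_{\breve G}(\gamma)$-equivariant fibration with irreducible equidimensional fibers, producing a $Z_{\breve G}(\gamma)$-equivariant bijection between $\Sigma^{\top}(\pi^{-1}(Y^{\reg}_{K,\tau}(\gamma)))$ and $\Sigma^{\top}(Y^{\reg}_{K,\tau}(\gamma))$.

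The main obstacle will be the second half: establishing the strict inequality
\[
\dim \pi^{-1}\bigl(Y_{K,\tau}(\gamma)\setminus Y^{\reg}_{K,\tau}(\gamma)\bigr) < \dim \pi^{-1}(Y^{\reg}_{K,\tau}(\gamma)).
\]
Two inputs combine here. On the base side, the Kazhdan--Lusztig density of the regular locus in the affine Springer fiber $Y_{K,\tau}(\gamma)$ forces a strict drop in the base dimension. On the fiber side, one needs a uniform bound $\dim \bar Y_{w\bar\tau}(\bar\gamma'') \le d$ for all $\bar\gamma''$ in the non-regular locus. Here the $\tau$-ellipticity of $w$ is essential: when $\bar\gamma''$ is non-regular, a Jordan decomposition together with the standard parabolic reduction of classical Lusztig varieties sends the problem to a proper $\tau$-stable Levi subgroup of $\bar\CP_K \rtimes \langle\bar\tau\rangle$; however, by definition of $\tau$-ellipticity no such Levi reduction can absorb $w$, which forces the fiber to be empty or of strictly smaller dimension. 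Summing the two estimates gives the desired strict inequality, and combined with the first half this identifies $\Sigma^{\top}(Y_{w\tau}(\gamma))$ with $\Sigma^{\top}(Y^{\reg}_{K,\tau}(\gamma))$ as $Z_{\breve G}(\gamma)$-sets, completing the proof.
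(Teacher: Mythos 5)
Your overall framework matches the paper's: you use the same projection $\pi \colon \breve G/\CI \to \breve G/\CP_K$, the same decomposition of the base into regular and non-regular loci, and the same appeal to the He--Lusztig irreducibility theorem for the fibers over the regular locus. But the proof has a genuine gap at the crucial step, the strict dimension drop over the non-regular part of the base.

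You invoke ``the Kazhdan--Lusztig density of the regular locus in the affine Springer fiber $Y_{K,\tau}(\gamma)$'' as established. It is not, at least not in the generality needed here: the situation involves a group (not Lie algebra) element, a partial affine flag variety $\breve G/\CP_K$, and a twist by $\dot\tau$. The paper does not take this as known; it \emph{derives} the strict inequality $\dim Y'_{K,\tau}(\gamma) < \dim Y_{K,\tau}(\gamma)$ by an auxiliary argument: one looks at the affine Springer fiber $Y_\gamma = Y_\tau(\gamma)$ in the full affine flag over the same base. Over any $g\CP_K$ in the non-regular locus, the fiber $\pi^{-1}(g\CP_K)\cap Y_\gamma$ is a classical Springer fiber for the non-regular element $\bar\gamma_g$, hence has positive dimension, so $\dim Y'_{K,\tau}(\gamma) < \dim Y'_\gamma \le \dim Y_\gamma$; combining this with the nontrivial input $\dim Y_\gamma = \dim Y_{K,\tau}(\gamma)$ (He--ALV, Theorem~3.1) gives the required strict inequality. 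You would need to either reproduce this argument or supply an independent justification of the density claim; as written, the step is unsupported.

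Your secondary heuristic about fiber dimension drop is also off. You argue that over a non-regular $\bar\gamma''$ the $\tau$-ellipticity of $w$ forces the fiber $\bar Y_{w\bar\tau}(\bar\gamma'')$ to be empty or strictly smaller, via a Levi reduction that ``cannot absorb $w$.'' This mechanism is not clear, and it is also not what happens: the correct input is Lusztig's Corollary~5.7 in \cite{Lu11b}, which gives a \emph{uniform} dimension $\ell(w)$ for these Lusztig-variety fibers regardless of whether $\bar\gamma''$ is regular. The strict drop must therefore come entirely from the base, which brings you back to the gap above. Over the regular locus you do not need any dimension statement beyond this; irreducibility from \cite{HL} plus equivariance of $\pi$ finishes the count once the non-regular locus is shown to contribute nothing in top dimension.
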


\begin{proof}
Let $\pi: \breve G/\CI \to \breve G/\CP$ be the projection map. Let $Y'_{K, \t}(\g)$ be the complement of $Y_{K, \t}^{\reg}(\g)$ in $Y_{K, \t}(\g)$ and $Y'_{w\t}(\g)=\pi \i(Y'_{K, \t}(\g)) \cap Y_{w\t}(\g)$.

We consider the following Cartesian product 
\[
\xymatrix{
Y_{w \t}^{\reg}(\g) \ar[r] \ar[d]_{\pi} & Y_{w \t}(\g) \ar[d]^{\pi} & Y'_{w \t}(\g) \ar[l] \ar[d]^{\pi} \\
Y_{K, \t}^{\reg}(\g) \ar[r] & Y_{K, \t}(\g) & Y'_{K, \t}(\g) \ar[l].
}
\] 

Let $\bar B$ be the image of $\CI$ in $\bar \CP_K$. Then $\bar B$ is a Borel subgroup of $\bar \CP_K$. Let $\d'=\Ad(\dot \t)$ be a group automorphism on $\bar \CP_K$. Then $\d'(\bar B)=\bar B$. For $g \CP_K \in Y_{K, \t}(\g)$, we denote by $\bar \g_g$ the image of $g \i \g g \dot \t \i$ in $\bar \CP_K$. It is easy to see that the fiber $\pi \i(g \CP_K) \cap Y_{w \t}(\g)$ is isomorphic to the Lusztig variety $\{h \bar B \in \bar \CP_K/\bar B; h \i \bar \g_g \d'(h) \in \bar B w \bar B\}$. By \cite[Corollary 5.7]{Lu11b}, the fibers are of dimension $\ell(w)$. Hence $\dim Y_{w \t}(\g)=\dim Y_{K, \t}(\g)+\ell(w)$. Moreover, we have $\dim Y^{\reg}_{w \t}(\g)=\dim Y^{\reg}_{K, \t}(\g)+\ell(w)$ and $\dim Y'_{w \t}(\g)=\dim Y'_{K, \t}(\g)+\ell(w)$. 

For any $g \CP_K \in Y'_{K, \t}(\g)$, $\bar \g_g$ is not a $\d'$-regular element in $\bar K$. Hence the fiber $\pi \i(g \CP_K) \cap Y_\g$ is isomorphic to the Springer fiber $\{h \bar B \in \bar K/\bar B; h \i \bar \g_g \d'(h) \in \bar B\}$, and its dimension is larger than $0$. Let $Y'_\g=\pi \i (Y'_{K, \t}(\g)) \cap Y_\g$. Then $\dim Y'_{K, \t}(\g)<\dim Y'_\g \le \dim Y_\t(\g)$. By \cite[Theorem 3.1]{He-ALV}, we have $\dim Y_\g=\dim Y_{K, \t}(\g)$. Hence $\dim Y'_{K, \t}(\g)<\dim Y_{K, \t}(\g)$ and $\dim Y'_{w \t}(\g)<\dim Y_{K, \t}(\g)+\ell(w)=\dim Y_{w \t}(\g)$. Thus the embedding $Y^{\reg}_{K, \t}(\g) \to Y_{K, \t}(\g)$ induces a natural bijection between $\Sigma^{{\rm top}}(Y_{K, \t}^{\reg}(\g))$ and $\Sigma^{{\rm top}}(Y_{K, \t}(\g))$ and 
the embedding $Y_{w \t}^{\reg}(\g) \to Y_{w \t}(\g)$ induces a natural bijection between $\Sigma^{{\rm top}}(Y_{w \t}^{\reg}(\g))$ and $\Sigma^{{\rm top}}(Y_{w \t}(\g))$.

Note that the fibers of the map $\pi: Y_{w \t}^{\reg}(\g) \to Y^{\reg}_{K, \t}(\g)$ are isomorphic to the Lusztig varieties associated with regular elements in $\bar \CP_K$ and elliptic elements in $W_0$. By \cite{HL}, the fibers of $Y_{w \t}^{\reg}(\g) \to Y_{K, \t}^{\reg}(\g)$ are all irreducible. Hence the projection map $\pi$ induces a natural bijection between $\Sigma^{{\rm top}}(Y_{w \t}^{\reg}(\g))$ and $\Sigma^{{\rm top}}(Y^{\reg}_{K, \t}(\g))$.

Note that the embedding maps and the projection map involved here are all equivariant under the action of $Z_{\breve G}(\g)$. Hence the embedding maps and the projection map induces a $Z_{\breve G}(\g)$-equivariant natural bijection between $\Sigma^{{\rm top}}(Y_{w \t}(\g))$ and $\Sigma^{{\rm top}}(Y_{K, \t}(\g))$. 
\end{proof}

\subsection{Enumeration of irreducible components of affine Lusztig varieties}\label{sec:levi}

To enumerate the irreducible components of affine Lusztig varieties, we introduce a subgroup  of $\breve G$ associated with an element $w$ of minimal length in its conjugacy class. 

Recall that $\tW_{\nu_w}$ is the Iwahori-Weyl group of $\breve M_{\nu_w}$ (see \S \ref{sec:HN}). We consider the length function on $\tW_{\nu_w}$ determined by $\CI_{\bM_{\nu_w}}$. This is the same as the length function determined by the positive (relative) root system of $\bM_{\nu_w}$ obtained from the intersection of the positive (relative) root system of $\bG$ with the set of roots of $\bM_{\nu_w}$. Let $\tilde \BS_{\nu_w}$ be the set of simple reflections of $\tW_{\nu_w}$. It is worth pointing out that the length function on $\tW_w$ is not the restriction of the length function on $\tW$ to $\tW_{\nu_w}$ and $\tilde \BS_{\nu_w}$ is not a subset of $\tilde \BS$. 

By \cite[\S 4.2]{He-ALV}, $w=u \t$, where $\t$ is a length-zero element $\t$ of $\tW_w$, and $\supp(u)$ is contained in a $\t$-stable spherical subset of $\BS_{\nu_w}$. Let $K_w$ is the minimal $\t$-stable spherical subset of $\BS_w$ such that $u \in W_{K_w}$ and $\CP_w$ be the standard parahoric subgroup of $\breve M_{\nu_w}$ associated with $K_w$. Then $u$ is a $\t$-elliptic element of $W_{K_w}$ and $\CP_w$ is stable under the conjugation action of $\dot \t$. 

If $\underline p$ is a reduction path, and $w=\text{end}(\underline p)$, we also write $\bM_{\underline p}$ for $\bM_{\nu_w}$, $\CP_{\underline p}$ for $\CP_w$, etc.

Let $\g \in \breve G^{\rs}$ and let $\g' \in \{\g\} \cap \breve M_{\underline p}$. By Theorem \ref{thm:HN1}, we have $Z_{\breve M_{\underline p}}(\g')=Z_{\breve G}(\g')$ and $Y^{\bM_{\underline p}}_{w}(\g') \cong Y_{w}(\g')$. Hence by Proposition \ref{prop:irr-basic}, we have \begin{align*}\tag{a} \sharp\bigl(Z_{\breve G}(\g) \backslash \Sigma^{{\rm top}}(Y_{w}(\g))\bigr) &=\sharp\bigl(Z_{\breve G}(\g') \backslash \Sigma^{{\rm top}}(Y_{w}(\g'))\bigr) \\ &=\sharp\bigl(Z_{\breve M_{\underline p}}(\g') \backslash \Sigma^{{\rm top}}(Y^{\BM_{\underline p}}_{u \t}(\g'))\bigr) \\ &=\sharp\bigl(Z_{\breve M}(\g') \backslash \Sigma^{{\rm top}}(Y^{\BM_{\underline p}}_{K_{\underline p}, \t}(\g'))\bigr).\end{align*}

In particular, $\sharp\bigl(Z_{\breve M_{\underline p}}(\g') \backslash \Sigma^{{\rm top}}(Y^{\BM_{\underline p}}_{K_{\underline p}, \t}(\g'))\bigr)$ is independent of the choice of $\g' \in \{\g\} \cap \breve M_{\underline p}$. We set $$n_{\underline p, \g}=\sharp Z_{\breve G}(\g')\backslash \Sigma^{{\rm top}}(Y^{\bM_{\underline p}}_{K_{\underline p}, \t}(\g')) \text{ for any } \g' \in \{\g\} \cap \breve M_{\underline p}.$$ Now we state the main result of this section. 

\begin{theorem}\label{thm:irr-alv}
    Let $w \in \tW$ and $\CT$ be a reduction tree of $\CT$. Then $$\sharp\bigl(Z_{\breve G}(\g) \backslash \Sigma^{{\rm top}}(Y_w(\g))\bigr)=\sum_{\underline p \in \RaP_{\{\g\}}(\CT); \ell(\underline p)+\ell(\text{end}(\underline p))-\<\bar \nu_{\text{end}(\underline p)}, 2 \rho\>+\dim Y_\g=\dim Y_w(\g)} n_{\underline p, \g}. $$
\end{theorem}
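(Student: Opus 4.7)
The plan is to mirror the proof of Proposition \ref{prop:irr-adlv}, replacing the base case $\sharp(J_b\backslash\Sigma^{{\rm top}}(X_w(b)))=1$ for minimal length $w$ by the more refined count $n_{\underline p,\g}$ supplied by formula~(a) in \S\ref{sec:levi}.

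First, I treat the base case: $w$ itself is a minimal length element in its conjugacy class, so the reduction tree $\CT$ consists of a single vertex $w$ with no edges, and $\RaP_{\{\g\}}(\CT)$ has a unique element $\underline p$ with $\text{end}(\underline p)=w$ and $\ell(\underline p)=0$. If $\{\g\}\cap\breve M_{\underline p}=\emptyset$, then by Theorem~\ref{thm:HN1} we have $Y_w(\g)=\emptyset$ and the right-hand side is empty. Otherwise, pick $\g'\in\{\g\}\cap\breve M_{\underline p}$; by Theorem~\ref{thm:HN1} combined with Proposition~\ref{prop:irr-basic} applied to $w=u\t$ inside $\tW_{\nu_w}$, we get
\[
\sharp\bigl(Z_{\breve G}(\g)\backslash\Sigma^{{\rm top}}(Y_w(\g))\bigr)=\sharp\bigl(Z_{\breve M_{\underline p}}(\g')\backslash\Sigma^{{\rm top}}(Y^{\bM_{\underline p}}_{K_{\underline p},\t}(\g'))\bigr)=n_{\underline p,\g},
\]
which is exactly formula~(a) of \S\ref{sec:levi}. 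Moreover, by Theorems \ref{thm:le} and \cite[Theorem 4.8]{He-Ann}, $\dim Y_w(\g)=\ell(w)-\langle\bar\nu_w,2\rho\rangle+\dim Y_\g$, so the length/Newton condition on the right-hand side is automatic. This settles the base case.

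Next, for general $w$, I apply Proposition~\ref{prop:dec2} to the reduction tree $\CT$, which gives a $Z_{\breve G}(\g)$-equivariant decomposition
\[
Y_w(\g)=\bigsqcup_{\underline p\in\RaP_{\{\g\}}(\CT)}Y_{\underline p}(\g),
\]
where each $Y_{\underline p}(\g)\to Y_{\text{end}(\underline p)}(\g)$ is a fibration with irreducible fibers of dimension $\ell(\underline p)$. Since $\text{end}(\underline p)$ is a minimal length element in its conjugacy class, Theorem~\ref{thm:le} together with \cite[Theorem 4.8]{He-Ann} yields
\[
\dim Y_{\text{end}(\underline p)}(\g)=\ell(\text{end}(\underline p))-\langle\bar\nu_{\text{end}(\underline p)},2\rho\rangle+\dim Y_\g,
\]
so $\dim Y_{\underline p}(\g)=\ell(\underline p)+\ell(\text{end}(\underline p))-\langle\bar\nu_{\text{end}(\underline p)},2\rho\rangle+\dim Y_\g$. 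Irreducibility of the fibers then gives a $Z_{\breve G}(\g)$-equivariant bijection $\Sigma^{{\rm top}}(Y_{\underline p}(\g))\cong\Sigma^{{\rm top}}(Y_{\text{end}(\underline p)}(\g))$, so by the base case applied to $\text{end}(\underline p)$,
\[
\sharp\bigl(Z_{\breve G}(\g)\backslash\Sigma^{{\rm top}}(Y_{\underline p}(\g))\bigr)=n_{\underline p,\g}.
\]

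Finally, summing over only those $\underline p$ for which $\dim Y_{\underline p}(\g)=\dim Y_w(\g)$ (since lower-dimensional strata contribute nothing to $\Sigma^{{\rm top}}(Y_w(\g))$) produces the desired formula. The main point to verify carefully is that Proposition~\ref{prop:dec2} really is $Z_{\breve G}(\g)$-equivariant and that the fibers being irreducible preserves the top-component count; both follow from the construction of the reduction step in \cite{He-Ann}, which transports the centralizer action through each individual edge $w\lup w'$ of $\CT$. The only mild subtlety is the dimension bookkeeping in the base case, but this is exactly encoded in the condition $\ell(\underline p)+\ell(\text{end}(\underline p))-\langle\bar\nu_{\text{end}(\underline p)},2\rho\rangle+\dim Y_\g=\dim Y_w(\g)$ already built into the sum.
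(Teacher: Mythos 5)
Your proposal is correct and takes essentially the same route as the paper: decompose via Proposition~\ref{prop:dec2}, compute the dimension of each stratum using Theorem~\ref{thm:le} (plus the minimal-length dimension formula), use irreducibility of the fibers to identify top components with those of $Y_{\text{end}(\underline p)}(\g)$, and finish with Proposition~\ref{prop:irr-basic} / formula~(a) of \S\ref{sec:levi}. The only cosmetic difference is that you factor out the minimal-length situation as an explicit ``base case,'' whereas the paper inlines that step directly via \S\ref{sec:levi}(a) at the end of the argument.
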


\begin{proof}
By Proposition \ref{prop:dec} and Proposition \ref{prop:dec2}, for any reduction path $\underline p \in \RaP_{\{\g\}}(\CT)$, we have 
\begin{gather*}
\dim Y_{\underline p}(\g)=\dim Y_{{\rm end}(\underline p)}(\g)+\ell(\underline p).
\end{gather*}


By Theorem \ref{thm:le}, we have $\dim Y_{\text{end}(\underline p)}(\g)=\ell(\text{end}(\underline p))-\<\bar \nu_{\text{end}(\underline p)}, 2 \rho\>+\dim Y_\g$. Hence the inclusion map induces a natural $Z_{\breve G}(\g)$-equivariant bijection $$\bigsqcup_{\underline p \in \RaP_{\{\g\}}(\CT); \ell(\underline p)+\ell(\text{end}(\underline p))-\<\bar \nu_{\text{end}(\underline p)}, 2 \rho\>+\dim Y_\g=\dim Y_w(\g)} \Sigma^{\rm top}(Y_{\underline p}(\g)) \to \Sigma^{\rm top}(Y_w(\g)).$$ 

Since the fibers of the $Z_{\breve G}(\g)$-equivariant projection map $Y_{\underline p}(\g) \to Y_{\rm end(\underline p)}(\g)$ are irreducible, we have a natural bijection $$Z_{\breve G}(\g) \backslash \Sigma^{\rm top}(Y_{\underline p}(\g)) \cong Z_{\breve G}(\g) \backslash \Sigma^{\rm top}(Y_{\rm end(\underline p)}(\g)).$$ By \S\ref{sec:levi} (a) and Proposition \ref{prop:irr-basic}, $$\sharp\bigl(Z_{\breve G}(\g) \backslash \Sigma^{\rm top}(Y_{\underline p}(\g))\bigr)=\sharp\bigl(Z_{\breve G}(\g) \backslash \Sigma^{\rm top}(Y_{\rm end(\underline p)}(\g))\bigr)=n_{\underline p, \g}.$$ 

The statement is proved. 
\end{proof}

\section{Explicit descriptions}

\subsection{Very special reduction paths}

Any element $w \in \tW$ can be written in a unique way as $w=x t^\mu y$ for $x, y \in W_0$ and $\mu \in X_*(T)^+_{\G_0}$ such that $t^\mu y \in {}^{\BS_0} \tW$. We say that $w$ is {\it in the antidominant chamber} if $x=w_0$ and we say that $w$ has the {\it regular translation part} if $\mu$ is regular, i.e., $\<\mu, \a\> >0$ for any simple root $\a$.

We focus on the cases where the reduction paths contributes to the maximal possible dimension (see \S\ref{sec:affineS} (a) and (b)). Note that $X_{\underline p}(b) \neq \emptyset$ if and only if $f_{\tW}(\text{end}(\underline p))=f_{\s}([b])$. In this case $\dim X_{\underline p} (b)=\ell(\underline p)+\dim X_{\text{end}(\underline p)}(b)=\ell(\underline p)+\ell(\text{end}(\underline p))-\<\nu_{[b]}, 2 \rho\>$. Thus we have $\ell(\underline p)+\ell(\text{end}(\underline p))-\<\nu_{[b]}, 2 \rho\> \le \dim X_w(b) \le d_w(b)$. We say that a reduction path $\underline p$ is {\it cordial} if for $b \in \breve G'$ with $f_{\tW}({\rm end}(\underline p))=f_{\s}([b])$, we have $d_w(b)=\ell(\underline p)+\ell({\rm end}(\underline p))-\<\nu_{[b]}, 2 \rho\>$. 

Next, we introduce the notion of very special reduction paths, inspired by the concept of very special parahoric subgroups in $p$-adic groups \cite[\S 2.2]{HZZ}. Several equivalent definitions of very special parahoric subgroups are provided in \cite[Proposition 2.2.5]{HZZ}. Perhaps the most natural definition is as follows: a parahoric subgroup of a $p$-adic group is very special if it has the maximal volume among all parahoric subgroups. However, it is not easy to adapt this definition in a purely combinatorial manner. Instead, we will use the following equivalent definition: a parahoric subgroup is very special if it has the maximal log-volume among all parahoric subgroups. The log-volume is defined as the length of the longest element in its Weyl group.

Let $w \in \tW$ be an element of minimal length in its conjugacy class. By \S\ref{sec:levi}, $w=u \t$, where $\t$ is a length-zero element in the Iwahori-Weyl group $\tW_{\nu_w}$ of $\breve M_{\nu_w}$ and $u$ is a $\t$-elliptic element of $W_{K_{w}}$. For any $\t$-stable spherical subset $K$ of $\tilde \BS_{\nu_w}$, we denote by $n_K$ the length of the longest element of $W_K$. Here we use the length function of $\tW_{\nu_w}$. We say that $w$ is {\it very special} if $n_{K_w} \ge n_K$ for any $\t$-stable spherical subset $K$ of $\tilde \BS_{\nu_w}$. 

Let $\bG'$ be the reduction group over $F'$ associated with $\bG$ and $w$ be a minimal length element in the Iwahori-Weyl group $\tW' \cong \tW$ of $\breve G'$. Let $b=\dot \t$. It is easy to see that $J_b=\breve M_{\nu_w}(\breve F')^{\Ad(\dot \t) \circ \s}$. The standard parahoric subgroups of the $p$-adic group $J_b$ are of the form $\CP_K(\breve F') \cap J_b$, where $K$ runs over the $\t$-stable spherical subsets of $\tilde \BS_{\nu_w}$. By \cite[\S 2.2.4]{HZZ}, $n_K$ is the log-volume of $\CP_K(\breve F') \cap J_b$. By \cite[Proposition 2.2.5]{HZZ}, $w$ is very special if and only if $\CP_{K_w}(\breve F') \cap J_b$ is a very special parahoric subgroup of $J_b$. 

If $\underline p$ is a reduction path and $w=\text{end}(\underline p)$, we say that $\underline p$ is very special if $w$ is very special. We will prove the following result on the reduction tree. 

\begin{proposition}\label{prop:special-path}
Let $w \in \tW$. Suppose that $w \in \tW$ has the regular translation part or is in the antidominant chamber. Let $\CT$ be a reduction tree of $w$. Then all the cordial reduction paths in $\CT$ are very special. 
\end{proposition}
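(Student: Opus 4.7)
The plan is to translate the cordiality condition for a reduction path into a purely numerical identity at its endpoint, and then show, under the hypothesis on the shape of $w$, that this identity forces the endpoint to be very special.

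First, I would fix a cordial reduction path $\underline p$ with endpoint $w' = u\tau$ in $\CT$, where $u$ is $\tau$-elliptic in $W_{K_{w'}}$ and $\tau$ has length zero in $\tW_{\nu_{w'}}$. Set $b = \dot\tau$; then $[b] = f_{\s}(f_{\tW}(w'))$ and $J_b \cong \breve M_{\nu_{w'}}(\breve F')^{\Ad(\dot\tau)\sigma}$. Cordiality reads $d_w(b) = \ell(\underline p) + \ell(w') - \langle \bar\nu_{w'}, 2\rho\rangle$, which via the virtual dimension formula becomes
\[
\ell(w) + \ell(\eta(w)) - \operatorname{def}(b) - 2\langle \bar\nu_{w'}, \rho\rangle \;=\; 2\ell(\underline p) + 2\ell(w') - 4\langle \bar\nu_{w'}, \rho\rangle.
\]
Since each reduction step decreases length by one or two, $\ell(\underline p) \le \ell(w) - \ell(w')$, so rearranging yields
\[
\ell(\eta(w)) \;\le\; \ell(w) + \operatorname{def}(b) - 2\langle \bar\nu_{w'}, \rho\rangle - 2\bigl(\ell(w) - \ell(\underline p) - \ell(w')\bigr).
\]

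Second, I would exploit the hypothesis on $w$. When $w = x t^\mu y$ has regular translation part ($\mu$ regular) or lies in the antidominant chamber ($x = w_0$), the element $\eta(w) = yx$ has an explicit structure controlling $\ell(\eta(w))$; in both cases $\ell(w)$ and $\ell(\eta(w))$ admit the comparison $\ell(w) = \ell(\eta(w)) + 2\langle \mu, \rho \rangle - 2\langle \bar\nu_{w'}, \rho\rangle - (\text{correction})$ that can be made precise by Theorem \ref{thm:gasf-fl} combined with Theorem \ref{thm:ux} applied to $w'$. This pins down the slack in the inequality above to be controlled by the discrepancy between $\operatorname{rk}_F \bJ_b$ and the maximum rank of any parahoric Levi in $\breve M_{\nu_{w'}}$ stabilized by $\Ad(\dot\tau)\circ\sigma$.

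Third, I would translate this discrepancy into log-volume language. By \cite[\S 2.2.4]{HZZ}, $n_K$ equals the log-volume of $\CP_K(\breve F') \cap J_b$, and by \cite[Proposition 2.2.5]{HZZ}, $\CP_{K_{w'}}(\breve F') \cap J_b$ is very special in $J_b$ precisely when $n_{K_{w'}} \ge n_K$ for every $\tau$-stable spherical $K \subset \tilde\BS_{\nu_{w'}}$. Combining this with the second step, the cordiality equality can hold only if $K_{w'}$ attains the maximum log-volume among $\tau$-stable spherical subsets of $\tilde\BS_{\nu_{w'}}$, that is, only if $w'$ is very special. To handle both cases uniformly, the antidominant-chamber case can be reduced to the regular-translation-part case by a conjugation by $w_0$, using the symmetry in the reduction tree and in the virtual dimension formula.

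The main obstacle, as I see it, is carrying out the bookkeeping in the second step. Specifically, one must relate the Weyl-group contribution $\ell(\eta(w))$ to the intrinsic invariants of the endpoint $w' = u\tau$ inside $\tW_{\nu_{w'}}$, and show that the slack between $\ell(\underline p)$ and $\ell(w) - \ell(w')$ precisely accounts for the gap between $n_{K_{w'}}$ and the maximum $n_K$. This seems to require a careful induction on the reduction tree, using Theorem \ref{thm:ux} to normalize $w'$ and tracking how the Hodge--Newton decomposition (Theorems \ref{thm:HN} and \ref{thm:HN1}) interacts with each $\lup$-step issuing from $w$.
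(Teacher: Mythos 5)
Your proposal attempts a purely numerical/combinatorial derivation of the very special property from the cordiality equation, but this route has a genuine gap that I believe is not fixable. The crucial input the paper uses, and which your sketch omits entirely, is the deep geometric theorem of Nie and of He--Zhou--Zhu (Theorem \ref{Chen-Zhu}(2), originally the Zhu--Rapoport conjecture) that the stabilizer in $J_b$ of any top-dimensional irreducible component of $X_\mu(b)$ in the affine Grassmannian is a \emph{very special} parahoric of $J_b$. The paper's proof first establishes Proposition \ref{prop:special-path} for $w=w_0 t^\mu$ by combining this theorem with the $J_b$-equivariant bijection $\Sigma^{\rm top}(X_{w_0t^\mu}(b))\cong\Sigma^{\rm top}(X_\mu(b))$ from \cite[Proposition 2.4.10]{HZZ} and with the description of stabilizers at the end points of reduction paths from \cite[Theorem 4.8]{He-Ann}; it then extends to general $w$ in the antidominant chamber or with regular translation part by \emph{constructing explicit reduction paths} from $w_0 t^\mu$ down to $w$ (Lemmas \ref{lem:path1} and \ref{lem:path2}, via the partial conjugation method of \cite{He07}), so that any cordial path issuing from $w$ completes to a cordial path issuing from $w_0 t^\mu$ and inherits the very special property since that property depends only on the endpoint.

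Your step two is where the argument breaks. You want the identity
\[
\tfrac12\bigl(\ell(w)+\ell(\eta(w))-\operatorname{def}(b)\bigr)-\langle\nu_b,\rho\rangle
\;=\;\ell(\underline p)+\ell(w')-\langle\bar\nu_{w'},2\rho\rangle
\]
together with the shape hypothesis on $w$ to force $n_{K_{w'}}$ to be maximal among all $\Ad(\t)$-stable spherical $K\subset\tilde\BS_{\nu_{w'}}$. But the defect $\operatorname{def}(b)=\operatorname{rank}_F\bG'-\operatorname{rank}_F\bJ_b$ is a single number and carries far less information than the full poset of $\Ad(\dot\t)\circ\s$-stable parahoric log-volumes; there is no direct combinatorial passage from the above length equation to the maximality of $n_{K_{w'}}$. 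Indeed, very-specialness of the stabilizer at the base case $w=w_0 t^\mu$ is a theorem requiring real geometric content (twisted orbital integrals in \cite{ZZ}, or the analysis of semi-modules and crystal structures in \cite{Nie}, or the arguments of \cite{HZZ}); it is not a formal consequence of the virtual dimension formula. Without invoking this input at some point, the step you flag as the ``main obstacle'' cannot be carried out, and the proof cannot close.
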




\subsection{The affine Grassmannian case}\label{sec:chen-zhu} 
We first prove Proposition \ref{prop:special-path} when $w=w_0 t^\mu$ for $\mu$ dominant. The proof relies on the explicit description of the stabilizer for the $J_b$ acts on the irreducible components of the affine Deligne-Lusztig varieties in the affine Grassmannian. 

We keep the notation in \S \ref{sec:ADLV}. Let $\widehat{\bG}'$ be the Langlands dual of $\bG'$ over the complex number field $\BC$. Let $\widehat{\bT}'$ be the maximal torus dual to the maximal torus $\bT'$ of $\bG'$. Let $\l_b \in X^*(\widehat{\bT}')$ be the ``best integral approximation'' of the Newton point of $b$ in the sense of \cite[Definition 2.1]{HV}. Let $V_\mu$ be the irreducible representation of $\widehat{\bG'}$ with highest weight $\mu$ and $V_\mu(\l_b)$ be the corresponding $\l_b$-weight subspace of $\widehat{\bT}'$. We have the following result. 

\begin{theorem}\label{Chen-Zhu} 
Let $\mu$ be a dominant coweight and $b \in \breve G'$ with $X_\mu(b) \neq \emptyset$. Then 
\begin{enumerate}
\item The number of $J_b$-orbits on $\Sigma^{{\rm top}}(X_\mu(b))$ equals $\dim V_\mu(\l_b)$.  

\item The stabilizer of each element in $\Sigma^{{\rm top}}(X_\mu(b))$ is a very special parahoric subgroup of $J_b$.
\end{enumerate}
\end{theorem}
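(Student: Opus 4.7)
The plan is to recognize this theorem as the Rapoport--Zhu conjecture on irreducible components of affine Deligne--Lusztig varieties in the affine Grassmannian, together with its refinement on stabilizers. Both statements have been established in the works the introduction cites (Zhou--Zhu, Nie, and He--Zou--Zhou), so my approach would be to reduce to these inputs in a clean way and then read off the two assertions.

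For part (1), I would first use the Hodge--Newton decomposition in the affine Grassmannian (the analogue of Theorem~\ref{thm:HN}) to reduce, when the Newton vector of $b$ is non-central in $\bG'$, to the corresponding statement inside a proper Levi subgroup $\bM'_{\nu_b}$; the branching rule $V_\mu|_{\widehat{\bM'_{\nu_b}}}$ matches the reduction on the representation-theoretic side via the character formula computing $\dim V_\mu(\l_b)$, so the reduction is compatible. The basic case, where $\l_b$ equals the Newton point up to the Kottwitz invariant, is then the core content of the Rapoport--Zhu conjecture: here the strategy is to realize $\Sigma^{\rm top}(X_\mu(b))$ via the Mirkovi\'c--Vilonen cycles of coweight $\mu$ weight $\l_b$, matching the geometric Satake side. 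One can execute this either through the Nie-style combinatorial construction of explicit top-dimensional components using semi-infinite orbits intersected with $\CK'^{,\mathrm{sp}} t^\mu \CK'^{,\mathrm{sp}}$, or through the specialization/degeneration argument of Zhou--Zhu relating the integral models of $\Gr'$ to the closed Newton stratum.

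For part (2), the plan is to pin down the stabilizer in $J_b$ of a top component $Z \in \Sigma^{\rm top}(X_\mu(b))$. After translating $Z$ into the basic situation via the Levi reduction above, the stabilizer is visibly contained in a standard parahoric subgroup of $J_b$ attached to some spherical subset $K$ of the local Dynkin diagram of $J_b$. The key point is to show that the $K$ which arises is log-volume maximal among spherical subsets, which is exactly the very-special condition in the sense of \cite[\S 2.2.4]{HZZ}; I would extract this from the explicit parametrization of top components by MV cycles together with the observation that the cycle representatives can be chosen inside a single Schubert-type orbit under the very special parahoric, forcing the stabilizer to contain it. The uniform treatment of both claims is the one carried out in He--Zou--Zhou, which I would invoke directly.

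The main obstacle, and the reason this is best cited rather than reproved, is the basic case of (1): without the non-trivial geometric input coming from either the Zhou--Zhu degeneration to the affine flag variety or Nie's combinatorial construction, there is no elementary way to produce enough top-dimensional MV-type cycles inside $X_\mu(b)$. Everything else, including the very-special statement in (2), flows once that enumeration is in hand.
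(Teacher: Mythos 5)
Your approach is essentially the same as the paper's: Theorem~\ref{Chen-Zhu} is not proved in the text but simply attributed to Zhou--Zhu~\cite{ZZ}, Nie~\cite{Nie}, and He--Zhou--Zhu~\cite{HZZ} (note: Zhou, not ``Zou''), and you do the same, correctly identifying part (1) with the Chen--Zhu conjecture and part (2) with the Rapoport--Zhu conjecture on stabilizers. Your accompanying sketch is broadly consistent with the cited literature, aside from a minor mischaracterization of the Zhou--Zhu method (it proceeds through twisted orbital integrals and the base change fundamental lemma, not a degeneration of integral models of $\Gr'$).
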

 
Part (1) was conjectured by M. Chen and X. Zhu and proved independently by R. Zhou and Y. Zhu \cite{ZZ} and Nie \cite{Nie}. Part (2) was conjectured by X. Zhu and M. Rapoport, and proved independently by Nie \cite{Nie}, and R. Zhou, Y. Zhu, and myself \cite{HZZ}. 

Now we prove Proposition \ref{prop:special-path} for $w=w_0 t^\mu$. By \cite[Proposition 2.4.10]{HZZ}, there exists a natural $J_b$-equivariant bijection $\Sigma^{{\rm top}}(X_{w_0 t^\mu}(b)) \cong \Sigma^{{\rm top}}(X_\mu(b))$. Therefore the stabilizer of each element in $\Sigma^{{\rm top}}(X_{w_0 t^\mu}(b))$ is a very special parahoric subgroup of $J_b$. 

Let $\CT$ be a reduction tree of $w_0 t^\mu$ and $\underline p \in \RaP_{[b]}(\CT)$ be cordial. Then $X_{\underline p}(b) \subset X_w(b)$ is locally closed and $\dim X_{\underline p}(b)=\dim X_w(b)$. Moreover, $X_{\underline p}(b)$ is stable under the $J_b$-action. Thus we have a $J_b$-equivariant injection $\Sigma^{{\rm top}}(X_{\underline p}(b)) \to \Sigma^{{\rm top}}(X_w(b))$. Moreover, by Proposition \ref{prop:dec}, the projection map $X_{\underline p}(b) \to X_{{\text end}(\underline p)}(b)$ induces a $J_b$-equivariant bijection $\Sigma^{{\rm top}}(X_{\underline p}(b)) \to \Sigma^{{\rm top}}(X_{{\text end}(\underline p)}(b))$. Therefore the stabilizer of each element in $\Sigma^{{\rm top}}(X_{{\text end}(\underline p)}(b))$ is a very special parahoric subgroup of $J_b$. 

By \cite[Theorem 4.8]{He-Ann}, $J_b$ acts transitively on $\Sigma^{{\rm top}}(X_{{\text end}(\underline p)}(b))$ and the stabilizer of each element in $\Sigma^{{\rm top}}(X_{{\text end}(\underline p)}(b))$ is isomorphic to the parahoric subgroup $\CP_{\underline p}(\breve F') \cap J_b$ of $J_b$. Hence $\CP_{\underline p}(\breve F') \cap J_b$ is a very special parahoric subgroup of $J_b$. Thus by definition, $\underline p$ is very special. This establishes Proposition \ref{prop:special-path} when $w=w_0 t^\mu$. 

\subsection{Construct the reduction paths}

To deduce the desired property in Proposition \ref{prop:special-path} for other $w$, we construct explicitly some reduction paths from $w_0 t^\mu$ to $w$. This is obtained via the ``partial conjugation method'' in \cite{He07}.

\begin{lemma}\label{lem:path1}
Suppose that $x, y \in W_0$ and $\mu \in X_*(T)$ is dominant and regular. Then there exists a sequence $w_0 t^\mu \lup w_1 \lup \cdots \lup x t^\mu y$ of length $\ell(w_0)-\frac{1}{2}(\ell(x)-\ell(y)+\ell(yx))$.
\end{lemma}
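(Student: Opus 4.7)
The plan is to construct the path explicitly via the partial conjugation method of \cite{He07}. I first observe that the claimed path length $\ell(w_0)-\tfrac{1}{2}(\ell(x)-\ell(y)+\ell(yx))$ decomposes as $n_1+n_2$, where $n_1:=\ell(w_0)-\ell(yx)$ is the intended number of type I edges (length drop $1$, of the form $x'\lup s_i x'$) and $n_2:=\tfrac{1}{2}(\ell(y)+\ell(yx)-\ell(x))$ is the intended number of type II edges (length drop $2$, of the form $x'\lup s_i x' s_i$). Both are non-negative — the first automatically, the second by the triangle inequality $\ell(x)\le\ell(y)+\ell(yx)$ — and together they account for the total length difference $\ell(w_0 t^\mu)-\ell(xt^\mu y)=\ell(w_0)-\ell(x)+\ell(y)$, the last equality using the dominant regularity of $\mu$ (which gives $\ell(xt^\mu y)=\ell(x)-\ell(y)+\ell(t^\mu)$).

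I would then induct on $\ell(y)$. For the base case $y=e$, where $n_2=0$ and the target length is $\ell(w_0)-\ell(x)$, pick a reduced expression $w_0=s_{i_1}\cdots s_{i_{\ell(w_0)}}$ whose tail spells out a reduced expression of $x$ and iteratively left-multiply $w_0t^\mu$ by $s_{i_1},s_{i_2},\ldots,s_{i_{\ell(w_0)-\ell(x)}}$. For $\mu$ dominant regular, at each stage the relevant simple reflection is simultaneously a left descent of the current $W_0$-component and a right descent of the full affine element, so the condition $s_i x'\s(s_i)<x'$ in the definition of $\lup$ is satisfied and the type I branch $y=s_i x'$ produces the next vertex.

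For the inductive step, write $y=y''s$ with $\ell(y)=\ell(y'')+1$ and feed $y''$ (together with a suitably modified left-factor) into the hypothesis, extending by one further $\lup$ edge. Two cases arise: when $\ell(y''sx)=\ell(y''x)+1$, append a type II conjugation edge that absorbs $s$ on the right (increasing $n_2$ by $1$); when $\ell(y''sx)=\ell(y''x)-1$, instead route the extra length drop through a type I edge in the current $\approx$-class (adjusting $n_1$ accordingly). In either case the arithmetic matches the formula $n_1+n_2=\ell(w_0)-\tfrac{1}{2}(\ell(x)-\ell(y)+\ell(yx))$.

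The main obstacle is verifying at each step that the proposed move is a genuine $\lup$ edge — i.e., exhibiting an element $x'$ in the current $\approx$-equivalence class and a simple reflection $s_i$ with $s_i x'\s(s_i)<x'$ — and, in the inductive step, finding an $\approx$-representative in which $s$ appears on the right in the required position. The regularity of $\mu$ is essential here, since it decouples the length contributions of the $W_0$-factors from that of $t^\mu$, guaranteeing that a left descent of the $W_0$-component always produces the needed length drop of the full affine element.
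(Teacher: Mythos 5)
Your arithmetic decomposition of the claimed length into $n_1=\ell(w_0)-\ell(yx)$ type-I steps and $n_2=\tfrac12(\ell(y)+\ell(yx)-\ell(x))$ type-II steps is exactly right and is the heart of the matter, and your base case coincides with the first half of the paper's argument (stripping a reduced word of $w_0(yx)^{-1}$ from the left). But the inductive step on $\ell(y)$ has a genuine gap. Write $y=y''s$ with $\ell(y)=\ell(y'')+1$; to reach $x t^\mu y$ in one $\lup$-move you must apply the hypothesis to $(sx,y'')$, since the only available conjugation is $s(sx\,t^\mu y'')s=x t^\mu y$. Using $\ell(a t^\mu b)=\ell(a)-\ell(b)+\ell(t^\mu)$ for $\mu$ dominant regular one finds $\ell(sx\,t^\mu y'')-\ell(x t^\mu y)=\ell(sx)-\ell(x)+1$, so the relevant dichotomy is $sx>x$ versus $sx<x$, not your condition on $\ell(y''sx)$ versus $\ell(y''x)$ (their difference is only constrained to be an odd integer, and can differ from $\pm1$). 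When $sx>x$ you get a length-$2$ drop, hence a legitimate type-II edge, and the length count works out. But when $sx<x$ the two elements $sx\,t^\mu y''$ and $x t^\mu y$ have equal length and are merely $\approx$-equivalent: there is \emph{no} extra $\lup$-edge between them, and indeed the induction already produces a path of the correct total length. Your proposed fix of adding ``a type-I edge in the current $\approx$-class'' is wrong twice over: it would overshoot the length count by $1$, and a type-I move $x'\lup s_i x'$ cannot change the right $W_0$-factor from $y''$ to $y=y''s$, so it cannot land on $x t^\mu y$. What is actually needed in this case is a re-termination argument showing that a path ending at $sx\,t^\mu y''$ can be rerouted to end at the $\approx$-equivalent element $x t^\mu y$, which is not automatic from the definition of $\lup$ (the $\approx$ flexibility in the definition sits on the source, not the target, of each edge).

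The paper sidesteps this entirely by working forward rather than by induction from the right end of $y$: after reaching $yx\,t^\mu$, it conjugates successively by $s_{k_1},\ldots,s_{k_m}$ where $y=s_{k_1}\cdots s_{k_m}$ is reduced. Each conjugation either preserves length (absorbed into the $\approx$ part of the next $\lup$-edge, so it costs nothing) or drops it by $2$ (a genuine type-II edge); counting the latter gives exactly $n_2$. That explicit two-stage construction delivers the endpoint $x t^\mu y$ directly and never meets the re-termination issue your induction runs into.
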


\begin{proof}
Since $\mu$ is dominant and regular, we have $\ell(t^\mu z)=\ell(t^\mu)-\ell(z)$ and $t^\mu z \in {}^{\BS_0} \tW$ for any $z \in W_0$. 

We first show that there exists a sequence $w_0 t^\mu \lup w_1 \lup \cdots \lup y x t^\mu$ of length $\ell(w_0)-\ell(yx)$. 

Let $w_0 (y x) \i=s_{i_1} \cdots s_{i_l}$ be a reduced expression. Then we have $\ell(s_{i_{j+1}} \cdots s_{i_l} y x)=l-j+\ell(yx)$ for any $j$. We have $\ell(s_{i_{j+1}} \cdots s_{i_l} y x t^\mu)=l-j+\ell(y x)+\ell(t^\mu)$ and $$\ell(s_{i_{j+2}} \cdots s_{i_l} y x t^\mu s_{i_{j+1}})=l-(j+1)+\ell(y x)+\ell(t^\mu)-1=\ell(s_{i_{j+1}} \cdots s_{i_l} t^\mu)-2.$$ So $s_{i_{j+1}} \cdots s_{i_l} t^\mu \lup s_{i_{j+2}} \cdots s_{i_l} t^\mu$ for any $j$. Thus we obtain a sequence \[\tag{a} w_0 t^\mu=s_{i_1} \cdots s_{i_l} y x t^\mu \lup s_{i_2} \cdots s_{i_l} y x t^\mu \lup \cdots \lup y x t^\mu\] of length $\ell(w_0)-\ell(yx)$.

Let $y=s_{k_1} \cdots s_{k_m}$ be a reduced expression. Then we have $$\ell(s_{k_j} \cdots s_{k_m} x t^\mu s_{k_1} \cdots s_{k_{j-1}})=\ell(s_{k_j} \cdots s_{k_m} x)+\ell(t^\mu)-(j-1)$$ and \begin{align*} \ell(s_{k_{j+1}} \cdots s_{k_m} x t^\mu s_{k_1} \cdots s_{k_{j}}) &=\ell(s_{k_{j+1}} \cdots s_{k_m} x)+\ell(t^\mu)-j \\ & \le \ell(s_{k_j} \cdots s_{k_m} x)+1+\ell(t^\mu)-j \\ &=\ell(s_{k_j} \cdots s_{k_m} x t^\mu s_{k_1} \cdots s_{k_{j-1}}).\end{align*}

So we have a sequence \begin{align*} y x t^\mu &=s_{k_1} \cdots s_{k_m} x t^\mu \to s_{k_2} \cdots s_{k_m} x t^\mu s_{k_1} \to \cdots  \\ & \to x t^\mu s_{k_1} \cdots s_{k_m}=x t^\mu y.\end{align*} In this sequence, each arrow either preserves the lengths of the elements or decrease the length by $2$. Moreover, the number of edges decreasing the lengths by $2$ equals to $\frac{1}{2}(\ell(y x t^\mu)-\ell(x t^\mu y))=\frac{1}{2} (\ell(y x)-\ell(x)+\ell(y))$. By deleting the edges (and the corresponding elements) which preserves the lengths, we obtain a sequence \[\tag{b} y x t^\mu \lup \cdots \lup x t^\mu y\] of length $\frac{1}{2} (\ell(y x)-\ell(x)+\ell(y))$. 

The statement follows by combining the sequence (a) with the sequence (b). 
\end{proof}

\begin{lemma}\label{lem:path2}
Suppose that $y \in W_0$ and $\mu \in X_*(T)^+_{\G_0}$ with $t^\mu y \in {}^{\BS_0} \tW$. Then there exists a sequence $w_0 t^\mu \lup w_1 \lup \cdots \lup w_0 t^\mu y$ of length $\ell(y)$.
\end{lemma}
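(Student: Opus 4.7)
The plan is to construct the sequence of $\lup$-arrows inductively, producing one arrow per simple reflection in a reduced expression of $y$, in the spirit of part (a) of the proof of Lemma~\ref{lem:path1}.

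Fix a reduced expression $y = s_{j_1}\cdots s_{j_m}$ with $m = \ell(y)$, and set $z_k = w_0 t^\mu s_{j_1}\cdots s_{j_k}$ for $0 \le k \le m$. The first technical input is the length formula $\ell(z_k) = \ell(w_0 t^\mu) - k$. The hypothesis $t^\mu y \in {}^{\BS_0}\tW$ combined with the reducedness of $y$ forces every prefix $t^\mu s_{j_1}\cdots s_{j_k}$ to also lie in ${}^{\BS_0}\tW$ — indeed, any left $\BS_0$-descent on a prefix would propagate to $t^\mu y$ upon appending the remaining letters of $y$, contradicting the hypothesis. Consequently $\ell(z_k) = \ell(w_0) + \ell(t^\mu s_{j_1}\cdots s_{j_k})$, and $\ell(t^\mu s_{j_1}\cdots s_{j_k})$ strictly decreases by one as $k$ increments.

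To build the arrow $z_k \lup z_{k+1}$, I take $x' = z_k$ and $s_i = s_{j_{k+1}}$. Since $w_0$ is the longest element of $W_0$, every simple reflection is a left descent of $w_0$, so $\ell(s_{j_{k+1}} z_k) = \ell(z_k)-1$; combined with the right descent $\ell(z_k s_{j_{k+1}}) = \ell(z_k)-1$ supplied by the length formula, this gives $\ell(s_{j_{k+1}} z_k s_{j_{k+1}}) = \ell(z_k)-2$, verifying the $\lup$-condition $s_i x' s_i < x'$. The $\lup$-target $s_i x' = s_{j_{k+1}} z_k$ has length $\ell(z_k)-1 = \ell(z_{k+1})$ and is $\approx$-equivalent to $z_{k+1} = z_k s_{j_{k+1}}$, because the single simple-reflection conjugation $s_{j_{k+1}} \cdot (s_{j_{k+1}} z_k) \cdot s_{j_{k+1}} = z_k s_{j_{k+1}}$ preserves length (both sides equal $\ell(z_k)-1$, since $s_{j_{k+1}}$ is both a left and a right descent of $z_k$). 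Chaining the $m$ arrows, and invoking at each stage the flexibility in the $\lup$-definition to pass from the previous $\lup$-target to any $\approx$-equivalent representative on the source side, yields a $\lup$-sequence of length $\ell(y)$ from $w_0 t^\mu$ to an element in the $\approx$-class of $w_0 t^\mu y$, which is the desired statement.

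The main technical point to be established is the prefix-closure of ${}^{\BS_0}\tW$ under taking initial segments of a reduced expression of $y$; once that is in hand the length bookkeeping and the arrow construction are elementary. The non-regularity of $\mu$ (relaxed in this lemma compared to Lemma~\ref{lem:path1}) is compensated for by the ${}^{\BS_0}\tW$-hypothesis on $t^\mu y$, which is precisely what supplies the length formulas on every prefix.
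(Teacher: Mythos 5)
Your proposal follows the same route as the paper: descend one letter at a time along a reduced expression of $y$, with the engine being the length computation $\ell(s_{j_{k+1}} z_k s_{j_{k+1}}) = \ell(z_k) - 2$. The one step that genuinely needs repair is your justification of prefix-closedness. Writing $u_k = t^\mu s_{j_1}\cdots s_{j_k}$, the claim that ``any left $\BS_0$-descent on a prefix would propagate to $t^\mu y$'' is not a valid inference: if $s u_k < u_k$ with $s u_k = u_k s_{j_{k+1}}$ (a case not excluded by anything in your setup), then $s u_{k+1} = u_k > u_{k+1}$, so the descent is \emph{killed} rather than propagated along the chain. The conclusion you want (that every $u_k$ lies in ${}^{\BS_0}\tW$) is true, but it requires a different argument. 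The paper obtains $\ell(u_k) = \ell(t^\mu) - k$ from the two-sided squeeze $\ell(t^\mu) - l = \ell(t^\mu y) \ge \ell(u_k) - (l-k) \ge \ell(t^\mu) - l$ and then reads off the ${}^{\BS_0}\tW$-membership; alternatively, one can note that the inversion set of a prefix of a reduced word is contained in that of the full word, which preserves the condition characterizing $t^\mu z \in {}^{\BS_0}\tW$ for $\mu$ dominant. Your observation that the $\lup$-arrow lands on $s_{j_{k+1}} z_k$, which is $\approx$-equivalent to but in general distinct from $z_{k+1} = z_k s_{j_{k+1}}$, is a genuine subtlety that the paper elides; the implicit convention is that $\approx$-equivalent targets are interchangeable, since reduction trees of $\approx$-equivalent elements carry identical end-point and length data, which is all the lemma is used for.
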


\begin{proof}
Let $y=s_{i_1} \cdots s_{i_l}$ be a reduced expression. Since $t^\mu y \in {}^{\BS_0} \tW$, we have $\ell(t^\mu y)=\ell(t^\mu)-l$. For any $j$, since $t^\mu y=(t^\mu s_{i_1} \cdots s_{i_j}) s_{i_{j+1} } \cdots s_{i_l}$, we have $$\ell(t^\mu)-l=\ell(t^\mu y) \ge \ell(t^\mu s_{i_1} \cdots s_{i_j})-(l-j) \ge \ell(t^\mu)-j-(l-j)=\ell(t^\mu)-l.$$ Therefore all the inequalities above must be equalities. In particular, $\ell(t^\mu s_{i_1} \cdots s_{i_j})=\ell(t^\mu)-j$. Hence we also have that $t^\mu s_{i_1} \cdots s_{i_j} \in {}^{\BS_0} \tW$. 

We have $\ell(w_0 t^\mu s_{i_1} \cdots s_{i_j})=\ell(w_0)+\ell(t^\mu)-j$ and \begin{align*} \ell(s_{j+1} w_0 t^\mu s_{i_1} \cdots s_{i_j} s_{j+1}) &=\ell(s_{j+1} w_0)+\ell(t^\mu)-(j+1) \\ &=\ell(w_0)-1+\ell(t^\mu)-(j+1) \\ &=\ell(w_0 t^\mu s_{i_1} \cdots s_{i_j})-2.\end{align*} Hence $w_0 t^\mu s_{i_1} \cdots s_{i_j} \lup w_0 t^\mu s_{i_1} \cdots s_{i_{j+1}}$. We obtain a sequence $$w_0 t^\mu \lup w_0 t^\mu s_{i_1} \lup \cdots w_0 t^\mu s_{i_1} \cdots s_{i_l}=w_0 t^\mu y$$ of length $\ell(y)$. 
\end{proof}

\subsection{Proof of Proposition \ref{prop:special-path}}\label{sec:proof}
We first consider the case where $w=x t^\mu y$ with $\mu$ regular. By Lemma \ref{lem:path1}, there exists a sequence $w_0 t^\mu \lup w_1 \lup \cdots \lup w$ of length $\ell(w_0)-\frac{1}{2}(\ell(x)-\ell(y)+\ell(yx))$. By definition, there exists a reduction tree $\CT'$ of $w_0 t^\mu$ which contains the sequence $w_0 t^\mu \lup w_1 \lup \cdots \lup w$ followed by the reduction tree $\CT$ as a subtree of $\CT'$. For any reduction path  $\underline p: w \lup x_1 \lup \cdots \lup x_m$in $\CT$, we set $\underline p': w_0 t^\mu \lup w_1 \lup \cdots \lup w \lup x_1 \lup \cdots \lup x_m$. Then $\underline p'$ is a reduction path in $\CT'$. Note that $d_{w_0 t^\mu}(b)-d_w(b)=\ell(w_0)-\frac{1}{2}(\ell(x)-\ell(y)+\ell(yx))$. If $\underline p$ is a cordial reduction path in $\CT$, then $\underline p'$ is a cordial reduction path in $\CT'$.  By \S \ref{sec:chen-zhu}, $\underline p'$ is a very special reduction path in $\CT'$. As the definition of the reduction paths only depends on the end point, $\underline p$ is again a very special reduction path in $\CT$. This finishes the proof of the statement when $w$ has the regular translation part. 

When $w$ is in the antidominant chamber, similar argument works by using Lemma \ref{lem:path2} instead of Lemma \ref{lem:path1}.


\subsection{Comparison between the irreducible components}

In this section, we study the cases where the irreducible components of the affine Lusztig varieties and the affine Deligne-Lusztig varieties ``match''. To do this, we need the following deep result on the regular locus of the affine Springer fibers. 

\begin{theorem}\label{thm:ngo}
    Suppose that $\bG$ is split over $L$ and $\g \in \CI$ is a regular semisimple element of $\breve G$. Then $Z_{\breve G}(\g)$ acts transitively on the regular locus $Y^{\reg}_{\BS_0}(\g)$ of the affine Springer fiber in the affine Grassmannian. 
\end{theorem}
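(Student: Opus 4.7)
The plan is to directly construct, for any two cosets $g_1 \CK, g_2 \CK \in Y^{\reg}_{\BS_0}(\g)$, an element $z \in Z_{\breve G}(\g)$ with $z g_1 \CK = g_2 \CK$, exploiting that $\CO_L$ is strictly Henselian (since $\kk$ is algebraically closed) and that $\bG$ is split. The three key inputs are: smoothness of centralizers at regular semisimple points, conjugacy of maximal tori over the algebraically closed residue field, and triviality of finite étale covers of a strictly Henselian base.

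For $g\CK \in Y^{\reg}_{\BS_0}(\g)$, set $\g_g := g^{-1} \g g \in \CK$. Since $\overline{\g_g}$ is regular semisimple in $\bG(\kk)$, smoothness of centralizers at regular semisimple points together with Hensel's lemma imply that $\bT_g := Z_{\bG_{\CO_L}}(\g_g)$ is a smooth closed maximal $\CO_L$-torus of the integral model $\bG_{\CO_L}$, with generic fiber $g^{-1} Z_{\bG_L}(\g) g$. Given two points $g_1\CK, g_2\CK \in Y^{\reg}_{\BS_0}(\g)$, the transporter of $\bT_{g_1}$ onto $\bT_{g_2}$ in $\bG_{\CO_L}$ is a right torsor under the smooth group scheme $N_{\bG_{\CO_L}}(\bT_{g_1})$, hence smooth over $\CO_L$; its special fiber is nonempty because all maximal tori of $\bG_\kk$ are conjugate over the algebraically closed field $\kk$. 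Hensel's lemma then provides an element of $\CK$ conjugating $\bT_{g_1}$ onto $\bT_{g_2}$, and after absorbing this element into $g_2$ we may assume $\bT_{g_1} = \bT_{g_2} =: \bT$.

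With this reduction, both $\g_{g_1}$ and $\g_{g_2}$ lie in $\bT(\CO_L)$ and are conjugate in $\breve G$, so $h := g_2^{-1}g_1$ normalizes $\bT(L) = Z_{\bG_L}(\g_{g_1})$ and represents a class $w \in W(\bG_L, \bT_L)$ with $w \cdot \g_{g_1} = \g_{g_2}$. The Weyl scheme $N_{\bG_{\CO_L}}(\bT)/\bT$ is finite étale over $\CO_L$, and because $\CO_L$ is strictly Henselian it is the constant group scheme attached to $W(\bG_L, \bT_L)$. Therefore $w$ lifts to an element $n \in N_\CK(\bT)$ with $n \g_{g_1} n^{-1} = \g_{g_2}$, and a direct computation shows that $z := g_1 n^{-1} g_2^{-1}$ centralizes $\g$ and satisfies $z g_2 \CK = g_1 \CK$.

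The main obstacle will be executing these steps rigorously in the mixed-characteristic setting, where $\bG_{\CO_L}$ and the associated transporter, centralizer, and Weyl schemes must be interpreted as perfect group schemes over $W(\kk)$ in the sense of Bhatt--Scholze and X.~Zhu; the smoothness, Hensel, and étaleness arguments must then be adapted to that framework. In equal characteristic, the plan above is essentially complete modulo standard results from SGA~3 on reductive group schemes, and amounts to a hands-on reinterpretation of the results of \cite{Ngo, Chi, Chi2}.
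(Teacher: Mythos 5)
Your proposal has a genuine gap at the very first step, and it is a fatal one. You write that ``since $\overline{\g_g}$ is regular semisimple in $\bG(\kk)$, ... $\bT_g := Z_{\bG_{\CO_L}}(\g_g)$ is a smooth closed maximal $\CO_L$-torus.'' But the regular locus $Y_{\BS_0}^{\reg}(\g)$ only requires $\overline{g^{-1}\g g}$ to be \emph{regular} in the reductive quotient $\bar\CP_{\BS_0} \cong \bG(\kk)$, not regular \emph{semisimple}; see the definition in the paper's \S\ref{sec:regular}. These are very different conditions: the whole point of affine Springer theory is that a regular semisimple $\g\in\CI \subset \breve G$ typically has reduction $\overline{g^{-1}\g g}$ equal to a regular unipotent or mixed element of $\bG(\kk)$. (Example: in $\GL_2$ take $\g=\begin{pmatrix}1&1\\0&1+\e\end{pmatrix}$; it is regular semisimple in $\breve G$ but its reduction is regular unipotent, and the identity coset lies in $Y^{\reg}_{\BS_0}(\g)$.) In that case $Z_{\bG_{\CO_L}}(\g_g)$ has special fiber the centralizer of a regular non-semisimple element, which is unipotent or mixed and in particular is not a torus, so $\bT_g$ is \emph{not} a maximal $\CO_L$-torus. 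The entire subsequent argument --- the transporter torsor under $N_{\bG_{\CO_L}}(\bT_{g_1})$, the étale Weyl-scheme, the lift of $w$ to $N_\CK(\bT)$ --- is built on the torus assumption and collapses once it fails.

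The correct statement of the difficulty is that the group scheme $Z_{\bG_{\CO_L}}(\g_g)$ is a smooth commutative affine group scheme whose generic fiber is a torus but whose special fiber degenerates; and one cannot simply conjugate two such group schemes into each other by an element of $\CK$ by a soft Hensel-plus-conjugacy-of-maximal-tori argument, because their special fibers may even be of different unipotent shape as $g$ varies. What the cited references actually do is entirely different: Ngo \cite[Prop.~3.7.1]{Ngo} constructs the \emph{regular centralizer group scheme} over the Chevalley base (via the Kostant section) and proves that the regular locus of the affine Springer fiber is a torsor under the group of $\CO_L$-points of this group scheme; Chi \cite{Chi,Chi2} deduces the group and mixed-characteristic versions from this. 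None of this machinery appears in your proposal, and it cannot be replaced by the maximal-torus argument you give. Note also that the paper itself does not prove Theorem~\ref{thm:ngo}; it is cited from \cite{Ngo,Chi,Chi2}, with an explicit warning that the analogue is \emph{false} for ramified groups \cite[Examples~7.7, 7.8]{Chi2} --- a warning your argument does not see, which is another symptom that the key input (the Kostant section / regular centralizer, which uses the split hypothesis) is missing.
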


This was first proved by Ngo \cite[Proposition 3.7.1]{Ngo} in the context of equal characteristic for Lie algebras. For loop groups in equal characteristic, Chi obtained the result by reducing it to the Lie algebra case, as shown in \cite[Corollary 3.8.2]{Chi}. In the setting of mixed characteristic for loop groups, the result was obtained by Chi in \cite[Corollary 7.6]{Chi2}. However, similar results do not hold for ramified groups, as demonstrated by \cite[Example 7.7 \& Example 7.8]{Chi2}.

Note that any element in $\breve{G}$ is conjugate to an element in $\breve{I} \dot{w} \breve{I}$ for a straight element $w \in \tilde{W}$. We say that $\gamma$ {\it has an integral Newton point} if $\gamma$ is conjugate to an element in $\breve{I} \dot{w} \breve{I}$ for $w = t^\mu$ for some $\mu \in X_*(T)_{\Gamma_0}$. In this case, $f_{\bG, \bG'}({\gamma}) = [t^\mu]$. In particular, all elements in $\CI$ have integral Newton points. It is also worth noting that many elements with integral Newton points are ramified elements.

Now we prove the main result of this section. 

\begin{theorem}\label{thm:main}
Let $\bG$ be a split group over $L$ and $\g$ be a regular semisimple element of $\breve G$ with integral Newton point. Suppose that $\bG'$ is associated with $\bG$ and $[b]=f_{\bG, \bG'}(\{\g\}) \in B_\s(\bG')$. Let $w \in \tW$ with $\dim X_w(b)=d_w(b)$. If $w$ is in the antidominant chamber or has the regular translation part, then $$\sharp\bigl(Z_{\breve G}(\g) \backslash \Sigma^{{\rm top}}(Y_w(\g))\bigr)=\sharp\bigl(J_b \backslash \Sigma^{{\rm top}}(X_w(b))\bigr).$$
\end{theorem}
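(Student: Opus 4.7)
The plan is to express each side as a sum over cordial reduction paths in a common reduction tree $\CT$ of $w$, and then show that every contribution on the left-hand side equals $1$. Specifically, since $\dim X_w(b)=d_w(b)$ by hypothesis, Proposition \ref{prop:irr-adlv} presents the right-hand side as $\sharp\{\underline p\in\RaP_{[b]}(\CT):\underline p\text{ cordial}\}$. On the other hand, Theorem \ref{thm:le} gives $\dim Y_w(\g)=d_w(b)+\dim Y_\g$, so Theorem \ref{thm:irr-alv} presents the left-hand side as $\sum_{\underline p\text{ cordial}}n_{\underline p,\g}$, the sum running over the same cordial paths (indeed $\RaP_{\{\g\}}(\CT)$ and $\RaP_{[b]}(\CT)$ coincide, as the end-point condition $f_{\tW}(\text{end}(\underline p))=f_\s([b])=f(\{\g\})$ is the same via the commutative diagram of \S\ref{sec:ass}). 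Since $n_{\underline p,\g}\ge 1$ for every $\underline p\in\RaP_{\{\g\}}(\CT)$, the theorem reduces to showing $n_{\underline p,\g}=1$ for each cordial $\underline p$.

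The hypothesis on $w$ permits me to invoke Proposition \ref{prop:special-path}: every cordial $\underline p$ is very special. Writing the end point as $u\t$ as in \S\ref{sec:levi}, this means that $K_{\underline p}$ is a $\t$-stable spherical subset of $\tilde\BS_{\nu_{\text{end}(\underline p)}}$ of maximal log-volume, equivalently that $\CP_{\underline p}$ is a very special parahoric of the Levi $\breve M_{\underline p}$. Because $\bG$ is split and $\g$ has integral Newton, the Levi $\bM_{\underline p}$ is itself split, and very special parahorics in split reductive groups are hyperspecial; hence $\breve M_{\underline p}/\CP_{\underline p}$ is, up to the action of $\t$, the affine Grassmannian of $\bM_{\underline p}$.

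By \S\ref{sec:levi}(a) combined with Proposition \ref{prop:irr-basic} applied inside $\bM_{\underline p}$ with the automorphism induced by $\t$,
\[
n_{\underline p,\g}=\sharp\bigl(Z_{\breve M_{\underline p}}(\g')\backslash\Sigma^{{\rm top}}(Y^{\bM_{\underline p},\reg}_{K_{\underline p},\t}(\g'))\bigr)
\]
for any $\g'\in\{\g\}\cap\breve M_{\underline p}$, which exists by Theorem \ref{thm:HN1}. Since $\g$ has integral Newton point, a representative $\g'$ can be chosen in an Iwahori subgroup of $\breve M_{\underline p}$. When $\t=1$, the transitivity of $Z_{\breve M_{\underline p}}(\g')$ on the regular locus of the affine Springer fiber in the affine Grassmannian of the split Levi $\bM_{\underline p}$ is exactly Theorem \ref{thm:ngo}, yielding $n_{\underline p,\g}=1$. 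In the general case one invokes a $\t$-equivariant refinement of the Ngô--Chi transitivity, obtained by extending their torsor-theoretic argument to the finite-order automorphism induced by $\t$ on $\breve M_{\underline p}$.

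The principal obstacle is precisely this $\t$-twist: Theorem \ref{thm:ngo} is formulated for honest affine Grassmannians of split groups, whereas very special cordial paths in general give rise to a $\t$-twisted partial affine flag variety of $\bM_{\underline p}$. For split $\bG$ with integral Newton, $\t$ acts on $\tW_{\nu_{\text{end}(\underline p)}}$ through a diagram automorphism of finite order, and one must verify that the centralizer-orbit transitivity on the regular locus descends under this twist. Once this equivariant refinement is in place, the remaining bookkeeping --- choosing $\g'$ inside a $\t$-stable Iwahori and matching conjugacy classes via Theorem \ref{thm:HN1} --- is routine, and the equality $n_{\underline p,\g}=1$ on every cordial very special path yields the theorem.
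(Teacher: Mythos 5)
Your overall strategy is the same as the paper's: use Proposition~\ref{prop:irr-adlv} to write the right-hand side as a count of cordial reduction paths, use Theorem~\ref{thm:irr-alv} and Theorem~\ref{thm:le} to write the left-hand side as $\sum_{\underline p\text{ cordial}}n_{\underline p,\g}$ over the same set of paths, invoke Proposition~\ref{prop:special-path} to make every cordial path very special (here you need the hypothesis on $w$), note that very special parahorics in a split Levi are hyperspecial, and appeal to the Ng\^o--Chi transitivity (Theorem~\ref{thm:ngo}) to conclude $n_{\underline p,\g}=1$. That is exactly the paper's argument.

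Where you go wrong is in treating the ``$\t$-twist'' as a genuine obstacle and then deferring its resolution to an unproved ``$\t$-equivariant refinement'' of Theorem~\ref{thm:ngo}, supposedly obtained ``by extending the torsor-theoretic argument''. This is not a proof; as written it leaves an essential step unestablished. In fact the obstacle you raise does not occur under the hypotheses of the theorem, and you should have noticed this rather than asserting an unjustified extension. Since $\g$ has integral Newton point, the straight conjugacy class attached to $\{\g\}$ is the class of the translation $t^\mu$ in the split group $\tW$; hence the straight element $\t$ appearing in the decomposition $\text{end}(\underline p)=u\t$ is itself conjugate to $t^\mu$ in $\tW$, i.e.\ $\t=t^{\nu}$ for $\nu=\nu_{\text{end}(\underline p)}\in W_0(\mu)$. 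Since $\nu$ is central in $\bM_{\nu}$ by construction of the Levi, $\Ad(\t)=\Ad(t^\nu)$ is the identity on $\breve M_{\underline p}$, so $\bar\t$ is trivial and $\breve M_{\underline p}/\CP_{\underline p}$ is the honest (untwisted) affine Grassmannian of the split group $\bM_{\underline p}$. There is no ``$\t$-twisted partial affine flag variety'' to contend with, and Theorem~\ref{thm:ngo} applies verbatim (after conjugating the relevant representative $p=\g'\dot\t^{-1}$, which has trivial Newton vector and trivial Kottwitz point in $\bM_{\underline p}$, into an Iwahori of $\breve M_{\underline p}$). Replacing your final two paragraphs with this observation, rather than with an appeal to an unproved equivariant strengthening of Ng\^o--Chi, closes the proof.
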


\begin{proof}
    By definition, $[b]=[t^\mu]$ for some $\mu \in X_*(T)_{\G_0}$. As $[t^\mu]=[t^{x(\mu)}]$ for any $x \in W_0$, we can also assume that $\mu$ is dominant. Let $\bM$ be the Levi subgroup generated by $\bT$ and $\mathbf U_\a$ for $\<\a, \mu\>=0$. Then $\bM$ is standard Levi subgroup of $\bG$ and $\bM$ splits over $L$. In this case, the very special parahoric subgroups of $\breve M$ are the hyperspecial subgroups. 
    
    By definition, we have $\RaP_{[b]}(\CT)=\RaP_{\{\g\}}(\CT)$. Moreover, by Theorem \ref{thm:le}, we have $\dim Y_w(\g)=d_w(b)+\dim Y_\g$. Then for $\underline p \in \RaP_{\{\g\}}(\CT)=\RaP_{[b]}(\CT)$, $\ell(\underline p)+\ell(\text{end}(\underline p))-\<\bar \nu_{\text{end}(\underline p)}, 2 \rho\>+\dim Y_\g=\dim Y_w(\g)$ if and only if $\ell(\underline p)+\ell(\text{end}(\underline p))-\<\bar \nu_{\text{end}(\underline p)}, 2 \rho\>=\dim X_w(b)$, that is, $\underline p \in \RaP_{[b]}(\CT)$ is cordial. 
    
    By Theorem \ref{thm:ngo} for $\bM$ and Proposition \ref{prop:special-path}, $n_{\underline p, \g}=1$ for any cordial reduction path in $\RaP_{\{\g\}}(\CT)$. Now the statement follows from Theorem \ref{thm:irr-alv} and Proposition \ref{prop:irr-adlv}.
\end{proof}

\subsection{Affine Lusztig varieties in the affine Grassmannian}\label{sec:chi-conj}


Let $\widehat{\bG}$ be the Langlands dual of $\bG$ over the complex number field $\BC$. Let $\widehat{\bT}$ be the maximal torus dual to the maximal torus $\bT$ of $\bG$. Suppose that the group $\bG'$ over $F'$ is associated with the group $\bG$ and $[b]=f_{\bG, \bG'}(\{\g\})$. Then by definition, we may identify the Newton point of $\g$ with the Newton point of $b$. In particular, if moreover $\g$ has the integral Newton point $\l_\g$, then under this identification, we have $\l_\g=\l_b$. We have the following result, which verifies a conjecture of Chi \cite{Chi} when $\g$ has an integral Newton point. 

\begin{theorem}
Let $\bG$ be a split group over $L$ and $\g$ be a regular semisimple element of $\breve G$ with an integral Newton point $\l_\g$. Let $\mu$ be a dominant coweight with $Y_{\mu}(\g) \neq \emptyset$. Then $$\sharp\bigl(Z_{\breve G}(\g) \backslash \Sigma^{\rm top}(Y_\mu(\g))\bigr)=\dim V_\mu(\l_\g).$$
\end{theorem}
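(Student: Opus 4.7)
The plan is to lift the problem from the affine Grassmannian to the affine flag variety by considering $w = w_0 t^\mu$, apply the main comparison Theorem \ref{thm:main}, and then invoke the Chen--Zhu--Rapoport--Zhu result (Theorem \ref{Chen-Zhu}) on the affine Deligne--Lusztig side. Concretely, set $w = w_0 t^\mu$, whose finite part is $w_0$, so $w$ lies in the antidominant chamber. The natural projection $\pi : \Fl \to \Gr$ sends $Y_{w_0 t^\mu}(\g)$ into $Y_\mu(\g)$ and is $Z_{\breve G}(\g)$-equivariant. Since $\CI \dot w_0 t^\mu \CI$ is the open Schubert stratum of $\CK^{\mathrm{sp}} t^\mu \CK^{\mathrm{sp}}$, the fiber over $g\CK^{\mathrm{sp}} \in Y_\mu(\g)$ identifies with $\{h\CI \subset \CK^{\mathrm{sp}}/\CI : h^{-1}(g^{-1}\g g)h \in \CI \dot w_0 t^\mu \CI\}$, which when nonempty is an open subset of the classical flag variety $\CK^{\mathrm{sp}}/\CI$, hence irreducible of dimension $\ell(w_0)$. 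A dimension count via Theorem \ref{thm:le} together with the Kottwitz--Rapoport formula $\dim X_\mu(b) = \<\mu - \nu_b, \rho\> - \frac{1}{2}\mathrm{def}(b)$ shows that the non-generic locus (where the fiber has smaller dimension) does not contribute to top-dimensional components, so $\pi$ induces a $Z_{\breve G}(\g)$-equivariant natural bijection
$$\Sigma^{\rm top}(Y_{w_0 t^\mu}(\g)) \xrightarrow{\sim} \Sigma^{\rm top}(Y_\mu(\g)).$$

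Next, I will verify the dimension hypothesis $\dim X_{w_0 t^\mu}(b) = d_{w_0 t^\mu}(b)$ needed to apply Theorem \ref{thm:main}. This follows from the identity $\dim X_{w_0 t^\mu}(b) = \dim X_\mu(b) + \ell(w_0)$ (an analogue of the projection formula above, see the discussion surrounding \cite[Proposition 2.4.10]{HZZ}) combined with a direct unwinding of the definition of $d_w(b)$ for $w = w_0 t^\mu$. Theorem \ref{thm:main} then yields
$$\sharp\bigl(Z_{\breve G}(\g) \backslash \Sigma^{\rm top}(Y_{w_0 t^\mu}(\g))\bigr) = \sharp\bigl(J_b \backslash \Sigma^{\rm top}(X_{w_0 t^\mu}(b))\bigr).$$
Combining the bijection from the first step, the $J_b$-equivariant bijection $\Sigma^{\rm top}(X_{w_0 t^\mu}(b)) \cong \Sigma^{\rm top}(X_\mu(b))$ from \cite[Proposition 2.4.10]{HZZ}, and Theorem \ref{Chen-Zhu}(1), I obtain
$$\sharp\bigl(Z_{\breve G}(\g) \backslash \Sigma^{\rm top}(Y_\mu(\g))\bigr) = \dim V_\mu(\l_b).$$
Finally, since $\g$ has integral Newton point and $f_{\bG, \bG'}$ preserves Newton points (as recalled in \S\ref{sec:chi-conj}), $\l_\g = \l_b$, which finishes the proof.

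The main obstacle is the fiber analysis in the first step: establishing that $\pi : Y_{w_0 t^\mu}(\g) \to Y_\mu(\g)$ induces a bijection on top-dimensional components. The ``generic'' fiber is well-behaved (open in the classical flag variety, hence irreducible of dimension $\ell(w_0)$), but one has to rule out that loci where $g^{-1}\g g$ lies in smaller Schubert cells of $\CK^{\mathrm{sp}} t^\mu \CK^{\mathrm{sp}}$ produce new top-dimensional components of $Y_{w_0 t^\mu}(\g)$. Although the analogous analysis on the affine Deligne--Lusztig side is handled in \cite{HZZ}, the affine Lusztig version requires a parallel argument, essentially stratifying $Y_\mu(\g)$ by the Bruhat position of $g^{-1}\g g$ inside the residual reductive quotient and using the regularity of $\g$ to bound the dimensions of these strata.
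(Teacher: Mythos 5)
Your overall strategy matches the paper's: reduce to $w = w_0 t^\mu$ in the affine flag variety, invoke Theorem \ref{thm:main}, transfer to the affine Grassmannian via \cite[Proposition 2.4.10]{HZZ}, and apply Theorem \ref{Chen-Zhu}(1). The gap you flag at the end is genuine and is the one step that keeps this from being a complete proof. The projection $\Fl \to \Gr$ restricted to $Y_{w_0 t^\mu}(\g) \to Y_\mu(\g)$ is not a fibration: over the locus where $g\i\g g \in \CK^{\mathrm{sp}} t^\mu \CK^{\mathrm{sp}}$ lands outside the open cell $\CI \dot w_0 t^\mu \CI$, the fiber is empty. You would need to show that this locus cannot contain a top-dimensional component of $Y_\mu(\g)$, and the ad hoc stratification argument you sketch would require reproving on the Lusztig-variety side facts whose Deligne--Lusztig analogues are already nontrivial theorems.

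The paper closes this gap with a cleaner decomposition. Instead of $Y_{w_0 t^\mu}(\g)$ alone, it sets $Y = \bigsqcup_{w \in W_0 t^\mu W_0} Y_w(\g)$, the \emph{entire} preimage of $Y_\mu(\g)$ in $\Fl$. This \emph{is} a $Z_{\breve G}(\g)$-equivariant fibration over $Y_\mu(\g)$ with all fibers isomorphic to $\CK^{\mathrm{sp}}/\CI$ (by \cite[proof of Theorem 6.9]{He-ALV}), so $\Sigma^{{\rm top}}(Y) \cong \Sigma^{{\rm top}}(Y_\mu(\g))$ comes for free, with no stratification by Bruhat position needed. The paper then cuts $Y$ down to $Y_{w_0 t^\mu}(\g)$ by importing the dimension estimate from the affine Deligne--Lusztig side rather than proving it directly: combining Theorem \ref{thm:le} with \cite[Theorem 9.1]{He-Ann} gives that for every $w \in W_0 t^\mu W_0$ with $w \neq w_0 t^\mu$, either $Y_w(\g) = \emptyset$ or $\dim Y_w(\g) < \dim Y_{w_0 t^\mu}(\g)$, so the inclusion $Y_{w_0 t^\mu}(\g) \hookrightarrow Y$ induces a bijection on $\Sigma^{{\rm top}}$. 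This is precisely the input your proposal is missing; with it in place, the rest of your argument goes through and coincides with the paper's.
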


\begin{proof}
Set $Y=\bigsqcup_{w \in W_0 t^\mu W_0} Y_w(\g)$. By \cite[Proof of Theorem 6.9]{He-ALV}, $Y$ is a $Z_{\breve G}(\g)$-equivariant fibration over $Y_\mu(\g)$ with fibers isomorphic to $\CK^{\mathrm{sp}}/\CI$. In particular, we have a $Z_{\breve G}(\g)$-equivariant bijection $\Sigma^{\rm top}(Y) \to \Sigma^{\rm top}(Y_{\mu}(\g))$. 

Let $\bG'$ be a group over $F'$ that is associated with the group $\bG$ and $[b]=f_{\bG, \bG'}(\{\g\})$. By Theorem \ref{thm:le}, for any $w$, we have either $X_w(b)=Y_w(\g)=\emptyset$ or $\dim Y_w(\g)=\dim X_w(b)+\dim Y_\g$. By \cite[Theorem 9.1]{He-Ann}, for $w \in W_0 t^\mu W_0$ and $w \neq w_0 t^\mu$, we have either $X_w(b)=\emptyset$ or $\dim X_w(b)<\dim X_{w_0 t^\mu}(b)$. Thus for $w \in W_0 t^\mu W_0$ and $w \neq w_0 t^\mu$, we have either $Y_w(\g)=\emptyset$ or $\dim Y_w(\g)<\dim Y_{w_0 t^\mu}(\g)$. The inclusion $Y_{w_0 t^\mu}(\g) \to Y$ induces a $Z_{\breve G}(\g)$-equivariant bijection $\Sigma^{\rm top}(Y_{w_0 t^\mu}(\g)) \to \Sigma^{\rm top}(Y)$. By \cite[Proposition 2.4.10]{HZZ}, we have a $J_b$-equivariant bijection $\Sigma^{\rm top}(X_{w_0 t^\mu}(b)) \to \Sigma^{\rm top}(X_\mu(b))$. By Theorem \ref{thm:main} and Theorem \ref{Chen-Zhu} (1), we have 
\begin{align*}
    \sharp\bigl(Z_{\breve G}(\g) \backslash \Sigma^{\rm top}(Y_\mu(\g))\bigr) &=\sharp\bigl(Z_{\breve G}(\g) \backslash \Sigma^{\rm top}(Y)\bigr)=\sharp\bigl(Z_{\breve G}(\g) \backslash \Sigma^{\rm top}(Y_{w_0 t^\mu}(\g))\bigr) \\ &=\sharp\bigl(J_b \backslash \Sigma^{\rm top}(X_{w_0 t^\mu}(b))\bigr)=\sharp\bigl(J_b \backslash \Sigma^{\rm top}(X_\mu(b))\bigr) \\ &=\dim V_\mu(\l_\g).
\end{align*}

The theorem is proved. 
\end{proof}

As we have seen from the proof, $\sharp\bigl(Z_{\breve G}(\g) \backslash \Sigma^{\rm top}(Y_\mu(\g))\bigr)=\dim V_\mu(\l_\g)$ if and only if $\sharp\bigl(Z_{\breve G}(\g) \backslash \Sigma^{\rm top}(Y_{w_0 \mu}(\g))\bigr)=\sharp\bigl(J_b \backslash \Sigma^{\rm top}(X_{w_0 t^\mu}(b))\bigr)$. By Theorem \ref{thm:irr-alv} and Proposition \ref{prop:irr-adlv}, the above condition is equivalent to the condition that $n_{\underline p, \g}=1$ for all cordial reduction paths in $\RaP_{\{\g\}}(\CT)$ for a reduction tree $\CT$ of $w_0 t^\mu$. However, as we have seen in \cite[Example 7.7 \& Example 7.8]{Chi2}, such condition is not satisfied for the affine Grassmannians of ramified groups. 

\subsection{Irreducibility} The enumeration of the (top dimensional) irreducible components of affine Deligne-Lusztig varieties in the affine flag is a challenging problem. We refer to the recent work of \cite{HNY} and \cite{SSY} for certain cases where $J_b$ acts transitively on $\Sigma^{{\rm top}}(X_w(b))$ and to the work \cite{Sch2} for the description of $J_b \backslash \Sigma^{{\rm top}}(X_w(b))$ in terms of double Bruhat graphs. By Theorem \ref{thm:main}, we may deduce similar results for affine Lusztig varieties. 



\end{document}